\documentclass[11pt,a4paper]{amsart}
\usepackage[utf8]{inputenc}
\usepackage[T1]{fontenc}
\usepackage{amsmath, amssymb, amsthm, amsfonts}
\usepackage{geometry}
\usepackage{enumitem}

\newtheorem{theorem}{Theorem}[section]
\newtheorem{proposition}[theorem]{Proposition}
\newtheorem{lemma}[theorem]{Lemma}
\newtheorem{definition}[theorem]{Definition}
\newtheorem{corollary}[theorem]{Corollary}
\newtheorem{remark}[theorem]{Remark}
\newtheorem{example}[theorem]{Example}
\newtheorem{notation}[theorem]{Notation}

\newcommand{\JT}{JT}
\newcommand{\Tree}{\mathcal{T}}
\newcommand{\norm}[1]{\left\lVert#1\right\rVert}
\newcommand{\supp}{\operatorname{supp}}
\newcommand{\ran}{\operatorname{ran}}
\newcommand{\J}{J}
\newcommand{\N}{N}
\newcommand{\Branches}{\mathcal{B}}

\begin{document}

\title{A Study of the Extreme Points in the Unit Ball of $\JT$}
\author[Spiros A. Argyros]{Spiros A. Argyros}
\address{National Technical University of Athens, Faculty of Applied Sciences, 
Department of Mathematics, Zografou Campus, 157 80, Athens, Greece.}
\email{sargyros@math.ntua.gr}
\date{\today}

\begin{abstract}
In this note, we investigate the extreme points of the unit ball of the James Tree space ($\JT$). We relate the geometric structure of $\JT$ to the classical James space $\J$ and provide partial characterizations of extremality based on the concept of separated vectors. We provide a complete characterization for positive vectors and establish the equal sums property for complete positive extreme points.
\end{abstract}

\makeatletter
\@namedef{subjclassname@2020}{\textup{2020} Mathematics Subject Classification}
\makeatother
\subjclass[2020]{Primary 46B20, Secondary  46B25}

\keywords{James Tree space; extreme points; greedy algorithm}

\maketitle

\section{Introduction}

The motivation for this research stems from the characterization of the extreme points of the unit ball of the classical James space $\J$, proved by S. Bellenot \cite{B} and recently revisited by S. Argyros and M. González \cite{AG2025}. In that context, it was established that a vector in $\J$ is an extreme point if and only if it satisfies a geometric property termed ``separated'' (or equivalently, if its norm coincides with its $\ell_2$ norm). This result naturally raises the question whether the extreme points of the unit ball of $\JT$ satisfy similar properties, as $\JT$ is the extension of $\J$ on the dyadic tree. 

In this note, we investigate the problem for the James Tree space $\JT$. The space $\JT$ is defined as the completion of the space of finitely supported functions on the dyadic tree $\Tree$ under the norm:
\[ \norm{x}_{\JT} = \sup_{\mathcal{P}} \left( \sum_{S \in \mathcal{P}} \left( \sum_{\alpha \in S} x(\alpha) \right)^2 \right)^{1/2} \]
where the supremum is taken over all families $\mathcal{P}$ of pairwise disjoint segments in $\Tree$. Such families $\mathcal{P}$ will be called partitions. A partition $\mathcal{P}$ for which this supremum is attained is called an \textbf{$x$-norming partition}.

Central to our study is the concept of a \textbf{separated vector}. A vector $x \in \JT$ is said to be separated if for every pair of distinct nodes $\alpha, \beta$ in its range, there exists an $x$-norming partition that separates them (i.e., $\alpha$ and $\beta$ belong to different segments).

The main problem we study here is whether the separation of a vector $x \in \JT$ implies that $x$ is an extreme point of the ball of radius $\|x\|$.

We recall that for the classical James space $\J$, the condition for extremality is stringent: $x$ is an extreme point if and only if $\norm{x}_{\J} = \norm{x}_2$. In $\JT$, this equivalence holds for vectors supported on a single branch, but the tree structure allows for more complex extremal behaviors.

We provide an affirmative answer to the main problem for vectors with finite support. For the cone of positive vectors, we provide a complete answer: a positive vector is an extreme point if and only if it is separated. Unlike the case of the space $\J$, where the only positive extreme points are the elements of the basis, in $\JT$ there are many positive extreme points. For example, any vector $x_n= \sum_{|\alpha | \le n} e_{\alpha}$ is an extreme point of the ball of radius $\norm{x_n}$. The analysis of positive vectors is significantly facilitated by the existence of a ``Greedy Algorithm'', which we introduce to construct $x$-norming partitions effectively.

Finally, for a class of positive extreme points, we prove the validity of the \textbf{Equal Sums Property}. This property states that for any node in the support, the sum of the vector along any branch segment descending from that node is constant. The positive extreme points that satisfy this property are those with complete support. This result highlights the rigid structure imposed by extremality on complete positive vectors.

\section{Preliminaries}

In this section, we establish the notation and basic properties of the dyadic tree and define the space $\JT$. We also introduce the geometric concepts of extreme points and separated vectors.

\subsection{The Dyadic Tree Structure}

Let $\Tree$ denote the dyadic tree. The elements of $\Tree$ are finite sequences of $0$'s and $1$'s, including the empty sequence $\emptyset$ (the root). We equip $\Tree$ with the natural partial order: $\alpha \le \beta$ if $\alpha$ is an initial segment of $\beta$.

\begin{definition}[Segments]
    A subset $S \subset \Tree$ is called a \textbf{segment} if it is convex with respect to the tree order and linearly ordered. That is, for any $\alpha, \beta \in S$ with $\alpha \le \beta$, if $\gamma \in \Tree$ satisfies $\alpha \le \gamma \le \beta$, then $\gamma \in S$. A \textbf{branch} of $\Tree$ is a maximal segment.
\end{definition}

\begin{definition}[Complete Subtree]
  A $\textbf{subtree}$ of $\Tree$ is any non empty subset of $\Tree$ with the partial order induced by that tree.  A subset $K \subseteq \Tree$ is called a \textbf{complete subtree} if for any $\alpha, \beta \in K$ with $\alpha \le \beta$, the entire segment $[\alpha, \beta] = \{ \gamma \in \Tree : \alpha \le \gamma \le \beta \}$ is contained in $K$.
\end{definition}

\subsection{The Space $\JT$}

The James Tree space, denoted by $\JT$, is constructed as a completion of finitely supported functions on $\Tree$.

\begin{definition}[Support and Range]
    Let $x: \Tree \to \mathbb{R}$ be a function.
    \begin{enumerate}
        \item The \textbf{support} of $x$, denoted $\supp(x)$, is the set of nodes where $x$ is non-zero:
        \[ \supp(x) = \{ \alpha \in \Tree : x(\alpha) \neq 0 \} \]
        \item The \textbf{range} of $x$, denoted $\ran(x)$, is defined as the smallest complete subtree of $\Tree$ that contains $\supp(x)$.
    \end{enumerate}
\end{definition}

\begin{definition}[The Norm of $\JT$]
    Let $c_{00}(\Tree)$ be the space of functions $x: \Tree \to \mathbb{R}$ with finite support. For $x \in c_{00}(\Tree)$, we define the norm:
    \begin{equation}
        \norm{x}_{\JT} = \sup_{\mathcal{P}} \left( \sum_{S \in \mathcal{P}} \left( \sum_{\alpha \in S} x(\alpha) \right)^2 \right)^{1/2}
    \end{equation}
    where the supremum is taken over all families $\mathcal{P}$ of pairwise disjoint segments in $\Tree$. The space $\JT$ is the completion of $(c_{00}(\Tree), \norm{\cdot}_{\JT})$.
\end{definition}

Throughout this note a partition $\mathcal{P}$ is a collection of pairwise disjoint segments of $\Tree$. For a fixed partition $\mathcal{P} = \{S_i\}_{i}$, we denote the value of $x$ with respect to this partition as:
\[ \norm{x}_{\mathcal{P}} = \left( \sum_{S \in \mathcal{P}} \left( \sum_{\alpha \in S} x(\alpha) \right)^2 \right)^{1/2} \]
For $x\in \JT$, a partition $\mathcal{P}$ is called an \textbf{$x$-norming partition} if 
\[ \norm{x}_\JT = \norm{x}_\mathcal{P}. \]
\begin{lemma} \label{L10} Let $x\in \JT$ and $S$ a segment of $\Tree$. Assume that $S\cap \supp(x) = \emptyset$ and $S \not\subset \ran(x) $.
 Then  $S\setminus \ran(x) $ is a segment of $\Tree$.
 \end{lemma}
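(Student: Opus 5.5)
The plan is to use only the order structure: describe $\ran(x)$ explicitly, observe that $S\cap\ran(x)$ is an initial piece of $S$, and then show that no part of $S$ can lie below it.

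\textbf{Step 1: describe the range.} I will first identify the range concretely:
\[ \ran(x)=\{\gamma\in\Tree:\ \beta\le\gamma\le\beta' \text{ for some } \beta,\beta'\in\supp(x)\}. \]
Denote the right-hand side by $R$. Every complete subtree containing $\supp(x)$ contains $R$, by the definition of completeness. Conversely, $R$ is itself complete. Indeed, if $\gamma_1\le\gamma_2$ lie in $R$ with $\beta_1\le\gamma_1$ and $\gamma_2\le\beta_2'$, where $\beta_1,\beta_2'\in\supp(x)$, then $[\gamma_1,\gamma_2]\subset[\beta_1,\beta_2']\subset R$.

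\textbf{Step 2: the intersection is convex in $S$.} If $S\cap\ran(x)=\emptyset$, then $S\setminus\ran(x)=S$ and there is nothing to prove. So assume $S\cap\ran(x)\neq\emptyset$. Since $S$ is convex and linearly ordered, and $\ran(x)$ is complete, the set $S\cap\ran(x)$ is a convex subset of the chain $S$. Consequently $S\setminus\ran(x)$ splits into at most two pieces. One is the lower part, consisting of the $\gamma\in S$ lying below every element of $S\cap\ran(x)$. The other is the upper part, consisting of the $\gamma\in S$ lying above every element of $S\cap\ran(x)$.

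\textbf{Step 3: the lower part is empty.} This is the key step, and the only place where $S\cap\supp(x)=\emptyset$ is used. Suppose $\gamma_1\in S\setminus\ran(x)$ and $\alpha\in S\cap\ran(x)$ with $\gamma_1<\alpha$. By Step 1, choose $\beta,\beta'\in\supp(x)$ with $\beta\le\alpha\le\beta'$. Both $\beta$ and $\gamma_1$ are initial segments of $\alpha$, so they are comparable.
\begin{itemize}
\item If $\beta\le\gamma_1$, then $\beta\le\gamma_1<\alpha\le\beta'$, so $\gamma_1\in\ran(x)$. This is a contradiction.
\item Hence $\gamma_1<\beta\le\alpha$. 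By convexity of $S$, this gives $\beta\in S\cap\supp(x)$, contradicting the hypothesis.
\end{itemize}

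\textbf{Step 4: conclusion.} By Step 3, $S\setminus\ran(x)$ equals the upper part, $\{\gamma\in S:\gamma>\alpha \text{ for all } \alpha\in S\cap\ran(x)\}$. It is nonempty because $S\not\subset\ran(x)$. It is linearly ordered as a subset of $S$. It is convex: if $\gamma\le\delta\le\gamma'$ with $\gamma,\gamma'$ in this set, then $\delta\in S$ by convexity of $S$, and $\delta\ge\gamma>\alpha$ for every $\alpha\in S\cap\ran(x)$. Hence $S\setminus\ran(x)$ is a segment.

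The main obstacle is Step 3, ruling out a piece of $S$ below the range. Without $S\cap\supp(x)=\emptyset$ this piece can genuinely occur, since a segment may pass down through a minimal support node.
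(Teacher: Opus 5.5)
Your proof is correct and follows essentially the paper's route: both use the explicit description $\ran(x)=\{c:\exists\, u,v\in\supp(x),\ u\le c\le v\}$ and show that no node of $S$ can lie below $S\cap\ran(x)$, since otherwise a node of $\supp(x)$ would be forced into $S$. The paper phrases this as ``$\min(S\cap\ran(x))$ is a minimal element of $\ran(x)$, hence lies in $\supp(x)$''; you instead place the lower witness $\beta\in\supp(x)$ of $\alpha$ inside $S$ by convexity, which spells out the completeness step that the paper leaves implicit.
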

 \begin{proof}  It is easy to check that $\ran(x)$ has the following description.
   \[ \ran(x) = \left\{ c: \exists \,u, v \in \supp(x) \quad \text{such that} \quad u \le c \le v \right\} \]
   Observe that the previous implies that every $u \in \ran(x)$ which is either maximal or minimal element of the subtree $\ran(x)$ belongs to $\supp(x).$
   Assume that $S\cap \ran(x) \ne \emptyset$ (otherwise the conclusion is trivial).   Further assume, on the contrary, that 
   $S\setminus \ran(x)$ is not a segment of $\Tree$. We set $I = S\cap \ran(x)$ Since $\min S < \min I$ it follows that
   $ c = \min I$ is a minimal element of the subtree $\ran(x)$ which belongs to $\supp(x)$ a contradiction.
   
   \end{proof}

In the following, the first part is well known.

\begin{lemma}\label{lem1}
    (1) Let $\mathcal{B}$ be a complete subtree of $\Tree$. Then for every $x \in \JT$ we have 
    \[ \norm{x}_{\JT} \ge \norm{x|_{\mathcal{B}}}_{\JT}. \]
    (2) For $x\in \JT$, let $\mathcal{P}$ be an $x$-norming partition and $\mathcal{B}$ a complete subtree. We set
     \[  \mathcal{P}' =\left\{ S\in \mathcal{P}: S \subset \mathcal{B} \right\} \]
     and we assume that \[\supp(y)\subset \bigcup_{S\in \mathcal{P}'} S \quad \text{where}\quad y= x|_\mathcal{B}.\]
     
     Then $\mathcal{P}'$ is a $y$-norming partition for $y = x|_{\mathcal{B}}$.
    
\end{lemma}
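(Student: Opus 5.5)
For part (1), I will take an arbitrary partition $\mathcal{Q}$ and show that $\norm{x|_{\mathcal{B}}}_{\mathcal{Q}} \le \norm{x}_{\JT}$. For each $S \in \mathcal{Q}$, the set $S\cap\mathcal{B}$ is again a segment. It is linearly ordered because it is a subset of $S$. It is convex because $S$ is convex, and if $\alpha\le\gamma\le\beta$ with $\alpha,\beta\in\mathcal{B}$, then $\gamma\in\mathcal{B}$ by the completeness of $\mathcal{B}$. The segments $S\cap\mathcal{B}$, discarding empty ones, are pairwise disjoint, so they form a partition $\mathcal{Q}'$. Moreover
\[ \sum_{\alpha\in S} x|_{\mathcal{B}}(\alpha) = \sum_{\alpha\in S\cap\mathcal{B}} x(\alpha). \]
It follows that $\norm{x|_{\mathcal{B}}}_{\mathcal{Q}} = \norm{x}_{\mathcal{Q}'} \le \norm{x}_{\JT}$, and taking the supremum over $\mathcal{Q}$ gives (1) for finitely supported $x$. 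For general $x\in\JT$, I will use that restriction to $\mathcal{B}$ is then a contraction on $c_{00}(\Tree)$, hence it extends to $\JT$ by density, and the inequality passes to the limit.

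For part (2), put $y = x|_{\mathcal{B}}$. By (1), $\norm{y}_{\JT}\le\norm{x}_{\JT}=\norm{x}_{\mathcal{P}}$. The strategy is to produce a competing partition for $x$ and use the maximality of $\mathcal{P}$. Suppose some partition $\mathcal{Q}$ satisfies $\norm{y}_{\mathcal{Q}} > \norm{y}_{\mathcal{P}'}$. As in (1), I may assume every segment of $\mathcal{Q}$ lies inside $\mathcal{B}$. I then form
\[ \mathcal{R} = \mathcal{Q} \cup (\mathcal{P}\setminus\mathcal{P}'). \]
I will split the norm of $x$ along $\mathcal{R}$ as
\[ \norm{x}_{\mathcal{R}}^2 = \norm{y}_{\mathcal{Q}}^2 + \sum_{S\in\mathcal{P}\setminus\mathcal{P}'} \Big(\sum_{S} x\Big)^2 > \norm{y}_{\mathcal{P}'}^2 + \sum_{S\in\mathcal{P}\setminus\mathcal{P}'} \Big(\sum_{S} x\Big)^2 = \norm{x}_{\mathcal{P}}^2. \]
Here the first equality uses $x=y$ on $\mathcal{B}$, and the last equality uses $\norm{y}_{\mathcal{P}'} = \norm{x}_{\mathcal{P}'}$ for the segments of $\mathcal{P}'\subset\mathcal{B}$. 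This contradicts the fact that $\mathcal{P}$ is $x$-norming, so $\mathcal{P}'$ is $y$-norming.

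The main obstacle is that the segments of $\mathcal{Q}$ and of $\mathcal{P}\setminus\mathcal{P}'$ need not be disjoint, since a segment $S\in\mathcal{P}$ not contained in $\mathcal{B}$ may still meet $\mathcal{B}$. The hypothesis $\supp(y)\subset\bigcup_{S\in\mathcal{P}'}S$ is what resolves this. It gives $S\cap\supp(y)=\emptyset$ for every $S\in\mathcal{P}\setminus\mathcal{P}'$, by disjointness from the segments of $\mathcal{P}'$. Hence replacing each such $S$ by $S\setminus\mathcal{B}$ does not change $\sum_S x$, as long as $S\setminus\mathcal{B}$ is a segment.

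So the step I must verify is that $S\setminus\mathcal{B}$ is a segment, or at least that it can be replaced by a segment carrying the same sum. I would first apply Lemma \ref{L10}, working with $\ran(y)\subset\mathcal{B}$ in place of $\mathcal{B}$, and replace $\mathcal{Q}$ by its trace on $\ran(y)$, which loses nothing since $y$ vanishes off $\ran(y)$. Lemma \ref{L10}, applied to $y$ and $S$ (noting $S\not\subset\ran(y)$ since $S\not\subset\mathcal{B}$), shows that $S\setminus\ran(y)$ is a segment with the same $x$-sum, because $x$ vanishes on $\ran(y)\setminus\supp(y)$ as well. The truncated segments are then disjoint from $\mathcal{Q}$ and from each other, and the computation above goes through.
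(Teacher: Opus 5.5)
Your proposal is correct and follows essentially the same route as the paper. Part (1) traces an arbitrary partition on the complete subtree $\mathcal{B}$. Part (2) argues by contradiction: you restrict a better partition $\mathcal{Q}$ to $\ran(y)$, replace each $S\in\mathcal{P}\setminus\mathcal{P}'$ by the segment $S\setminus\ran(y)$ supplied by Lemma \ref{L10} (which has the same $x$-sum), and combine the two families to beat $\mathcal{P}$. Your observation that one must cut along $\ran(y)$ rather than $\mathcal{B}$ is exactly the point the paper relies on, and your density remark in part (1) only makes explicit a step the paper leaves implicit.
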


\begin{proof}
    (1) Let $y = x|_{\mathcal{B}}$. Let $\mathcal{P}$ be any partition of $\Tree$ (i.e., a family of pairwise disjoint segments of $\Tree$). Then the family 
    \[ \hat{\mathcal{P}} = \{S \cap \mathcal{B} : S \in \mathcal{P} \text{ and } S \cap \mathcal{B} \neq \emptyset\} \]
    is also a partition of $\Tree$ (since the intersection of a segment with a complete subtree is again a segment). 
    
    Furthermore, for every $S \in \mathcal{P}$, since $y$ vanishes outside of $\mathcal{B}$, it holds that:
    \[ \sum_{\alpha \in S} y(\alpha) = \sum_{\alpha \in S \cap \mathcal{B}} x(\alpha). \]
    Hence, $\norm{y}_{\mathcal{P}} = \norm{x}_{\hat{\mathcal{P}}} \le \norm{x}_{\JT}$. 
    
    Therefore, taking the supremum over all such partitions $\mathcal{P}$, we obtain:
    \[ \norm{y}_{\JT} = \sup \left\{ \norm{y}_{\mathcal{P}} : \mathcal{P} \text{ is a partition of } \Tree \right\} \le \norm{x}_{\JT}, \]
    which completes the proof of the first part.
    
    (2) Assume on the contrary that there exists a partition $\mathcal{Q}$ such that $\norm{y}_\mathcal{Q} > \norm{y}_{\mathcal{P}'}$. Since $\ran(y)$ is a complete subtree we may assume that the segments $S \in \mathcal{Q}$ are subsets of $\ran(y)$. Further for $S  \in \mathcal{P} \: \text{with} \: S\not\subset \mathcal{B} $ we set $ S' = S\setminus \ran(y) $, which from Lemma \ref{L10}, is a segment and we observe that from our assumptions we have 
    \[ \sum_S x(\alpha) = \sum_{S'} x(\alpha) \]
    
   We set \[\mathcal{P}'_2 = \left\{ S' = S\setminus \ran(x) : S \in \mathcal{P} \quad \text{and} \quad S \not\subset \mathcal{B}\right\}\]
    Then it is easy to see that for the partition  
     \[ \mathcal{Q}' = \mathcal{Q} \cup  \mathcal{P}'_2 \]
     we have $\norm{x}_\mathcal{P} < \norm{x}_{\mathcal{Q}'}$, a contradiction completing the proof.
\end{proof}

\begin{proposition}\label{P1}[Existence of Norming Partitions]
  (i)   For every $x \in \JT$, there exists at least one partition $\mathcal{P}$ such that the norm is attained:
    \[ \norm{x}_{\JT} = \norm{x}_{\mathcal{P}}. \]
    (ii) Let $x\in\JT$ and $\{\mathcal{P}_n\}_n$ a sequence of partitions such that 
    \[ \norm{x}_{\mathcal{P}_n}> \norm{x}_{\JT} -\epsilon_n \quad \text{with} \quad  \epsilon_n > 0, \: \epsilon_n \to 0. \]
    Then there exists a subsequence $\{\mathcal{P}_{n_k} \}_k$ and a partition $\mathcal{P}$ such that 
    \[ \mathcal{P}_{n_k} \xrightarrow{\text{pw}} \mathcal{P} \quad \text{and} \quad \norm{x}_\mathcal{P} = \norm{x}_{\JT}. \]

\end{proposition}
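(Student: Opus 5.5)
Part (i) follows from part (ii): choose partitions $\mathcal{P}_n$ with $\norm{x}_{\mathcal{P}_n} > \norm{x}_{\JT} - 1/n$, which exist by the definition of the norm, and apply (ii). So the work lies in (ii). We first fix the meaning of pointwise convergence. Encode a partition $\mathcal{P}$ by the function $R_{\mathcal{P}} : \Tree\times\Tree \to \{0,1\}$, where $R_{\mathcal{P}}(\alpha,\beta)=1$ if and only if $\alpha$ and $\beta$ lie in a common segment of $\mathcal{P}$. Then $\mathcal{P}_n \xrightarrow{\text{pw}} \mathcal{P}$ means $R_{\mathcal{P}_n}(\alpha,\beta)\to R_{\mathcal{P}}(\alpha,\beta)$ for all $\alpha,\beta$. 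Since $\Tree\times\Tree$ is countable, a diagonal argument gives a subsequence $\mathcal{P}_{n_k}$ along which every $R_{\mathcal{P}_{n_k}}(\alpha,\beta)$ is eventually constant. Call the limit $R$. We must check that $R$ comes from a partition. Each $R_{\mathcal{P}_n}$ is an equivalence relation restricted to the covered nodes (reflexive on covered nodes, symmetric, transitive), whose classes are linearly ordered and convex. These are finitary conditions: each involves two or three nodes, or a pair $\alpha\le\beta$ together with a node between them. Such conditions are preserved under eventual constancy. Hence the classes of $R$ are pairwise disjoint segments, and they form a partition $\mathcal{P}$.

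Next we show $\norm{x}_{\mathcal{P}} = \norm{x}_{\JT}$. The inequality $\le$ is trivial, so we need $\ge$. We first treat $x\in c_{00}(\Tree)$. Let $F=\supp(x)$, which is finite. Then $\norm{x}_{\mathcal{Q}}^2$ depends only on the trace of $\mathcal{Q}$ on $F$, namely the restriction of $R_{\mathcal{Q}}$ to $F\times F$, including which nodes of $F$ are covered. There are finitely many such traces. Pointwise convergence therefore makes the traces of $\mathcal{P}_{n_k}$ on $F$ eventually equal to that of $\mathcal{P}$. So $\norm{x}_{\mathcal{P}_{n_k}} = \norm{x}_{\mathcal{P}}$ eventually, and letting $k\to\infty$ gives $\norm{x}_{\mathcal{P}}\ge \norm{x}_{\JT}$.

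For general $x\in\JT$, the main obstacle is that $x$ need not be finitely supported. We will use truncations. Let $x_m = x|_{\{|\alpha|\le m\}}$, so that $\norm{x-x_m}_{\JT}\to 0$. Here we use that the finite-level restrictions converge in $\JT$, which holds because finitely supported vectors are dense and restriction to levels is a contraction by Lemma~\ref{lem1}(1). For each fixed partition $\mathcal{Q}$, the map $z\mapsto\norm{z}_{\mathcal{Q}}$ is a seminorm dominated by $\norm{\cdot}_{\JT}$. Therefore
\[ \norm{x}_{\mathcal{P}_{n_k}} \le \norm{x_m}_{\mathcal{P}_{n_k}} + \norm{x-x_m}_{\JT}. \]
The trace argument gives $\norm{x_m}_{\mathcal{P}_{n_k}} = \norm{x_m}_{\mathcal{P}}$ for all large $k$. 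Combining these,
\[ \norm{x}_{\JT} = \lim_k \norm{x}_{\mathcal{P}_{n_k}} \le \norm{x_m}_{\mathcal{P}} + \norm{x-x_m}_{\JT} \le \norm{x}_{\mathcal{P}} + 2\norm{x-x_m}_{\JT}. \]
Letting $m\to\infty$ completes the proof.

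One point needs care: $\norm{x}_{\mathcal{P}}$ must be well defined for $x\in\JT$, since infinite segments require the series $\sum_{\alpha\in S}x(\alpha)$ to converge. We handle this by defining $\norm{\cdot}_{\mathcal{P}}$ on $\JT$ as the continuous extension of the seminorm from $c_{00}(\Tree)$. On an infinite segment the sums converge because $J$-type norms control the oscillation of partial sums along a branch.
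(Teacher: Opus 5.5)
Your proof is correct and follows essentially the same route as the paper: the space of partitions is compact under pointwise convergence of a $\{0,1\}$-valued encoding, and $\mathcal{P}\mapsto\norm{x}_{\mathcal{P}}$ is controlled by uniform approximation with finitely supported vectors, whose $\mathcal{P}$-norm depends on only finitely many coordinates. The differences are cosmetic: you encode partitions by the relation $R_{\mathcal{P}}$ instead of the node/edge indicators $(v,e)$, use a diagonal argument instead of Tychonoff, and specialize the approximants to level truncations, although any approximating sequence from $c_{00}(\Tree)$ would do, so the appeal to Lemma~\ref{lem1}(1) is not actually needed.
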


\begin{proof}
   (i)  We use a compactness argument. Let $\Pi$ be the set of all admissible partitions (collections of pairwise disjoint segments). We can encode any partition $\mathcal{P} \in \Pi$ as an element of the product space $K = \{0, 1\}^{\Tree} \times \{0, 1\}^{\Tree}$ by defining two indicator functions for each node $\alpha \in \Tree$:
    \begin{itemize}
        \item $v(\alpha) = 1$ if $\alpha$ belongs to some segment in $\mathcal{P}$, and $0$ otherwise.
        \item $e(\alpha) = 1$ if $\alpha$ belongs to the same segment as its parent, and $0$ otherwise (for the root $\emptyset$, $e(\emptyset) = 0$).
    \end{itemize}
    For a partition $\mathcal{P} = \{ I_i\}_{i\in F}$ the encoding $v$ creates the indicator function of $\bigcup_{i\in F}I_i$ and 
    the encoding $e$ the indicator function of $ \bigcup_{i\in F} (I_i\setminus\{\min I_i\})$. It is easy to check that the encoding $v, e$ determine uniquely each partition $\mathcal{P}$.

    The configurations in $K$ that correspond to valid partitions are exactly those satisfying two local conditions for all $\alpha \in \Tree$:
    \begin{enumerate}
        \item If $e(\alpha) = 1$, then $v(\alpha) = 1$ and $v(\text{parent}(\alpha)) = 1$.
        \item $\sum_{\beta \in \operatorname{children}(\alpha)} e(\beta) \le 1$ (each node connects to at most one child in the same segment).
    \end{enumerate}
    Since these constraints are strictly local (involving only a node, its parent, and its children), $\Pi$ is a closed subset of the compact space $K$ (by Tychonoff's theorem), making $\Pi$ itself a compact Hausdorff space under the product topology.

    For any vector $z \in \JT$, let $f_z: \Pi \to \mathbb{R}$ be the functional defined by $f_z(\mathcal{P}) = \norm{z}_{\mathcal{P}}$. 
    If $z \in c_{00}(\Tree)$ (i.e., $z$ has finite support), $f_z$ depends only on the coordinates $(v(\alpha), e(\alpha))$ for $\alpha$ in the minimal complete subtree containing the finite support of $z$ which is also a finite set. Thus, $f_z$ is a continuous function on $\Pi$.

    Now, let $x \in \JT$. Since $c_{00}(\Tree)$ is dense in $\JT$, there exists a sequence $(x_n)_{n=1}^\infty$ in $c_{00}(\Tree)$ such that $\norm{x - x_n}_{\JT} \to 0$ as $n \to \infty$.
    For a fixed partition, $\norm{\cdot}_{\mathcal{P}}$ is a semi-norm, so by the triangle inequality we have:
    \[ |f_x(\mathcal{P}) - f_{x_n}(\mathcal{P})| = \left| \norm{x}_{\mathcal{P}} - \norm{x_n}_{\mathcal{P}} \right| \le \norm{x - x_n}_{\mathcal{P}} \le \norm{x - x_n}_{\JT} \]
    for all $\mathcal{P} \in \Pi$. This shows that the sequence of continuous functions $f_{x_n}$ converges uniformly to $f_x$ on $\Pi$.
    
    As a uniform limit of continuous functions, $f_x: \Pi \to \mathbb{R}$ is continuous. Since $\Pi$ is compact, $f_x$ attains its maximum. Therefore, there exists a partition $\mathcal{P}^* \in \Pi$ such that:
    \[ \norm{x}_{\mathcal{P}^*} = \max_{\mathcal{P} \in \Pi} \norm{x}_{\mathcal{P}} = \norm{x}_{\JT}. \]
    This $\mathcal{P}^*$ is the required $x$-norming partition.
    
    (ii) For each $n\in \mathbb{N}$ consider the coding $(v(\mathcal{P}_n),  e( \mathcal{P}_n )) \in \Pi$. From the compactness of $\Pi$ there exists a subsequence 
    \[  \{ (v (\mathcal{P}_{n_k}), e(\mathcal{P}_{n_k} )) \}_k  \to  ( v (\mathcal{P}), e(\mathcal{P})) \quad \text{with}  
    \quad \mathcal{P} \: \text{partition}. \]
  Clearly $ \mathcal{P}_{n_k} \xrightarrow{\text{pw}} \mathcal{P}$ and the continuity of the function $f_x$ together with our assumptions yield that 
    \[ \norm{x}_\mathcal{P} = \norm{x}_{\JT}. \]
\end{proof}
\begin{remark} \label{Re1} Notice that the above proof yields that $\mathcal{P}_{n_k}  \xrightarrow{\text{pw}} \mathcal{P}$ in the following sense. If $s\in \mathcal{P}$ is a segment there exists a sequence of segments $(s_{n_k})_k$, $s_{n_k} \in \mathcal{P}_{n_k}$ such that $\chi_{s_{n_k}} \xrightarrow{\text{pw}} \chi_s$.
\end{remark}

\subsection{Extreme Points and Separated Vectors}

We focus on the geometric structure of the unit sphere of $\JT$.

\begin{definition}[Extreme Point]
    A non-zero vector $x \in \JT$ is called an \textbf{extreme point} of the ball of radius $\norm{x}$ if it cannot be written as a non-trivial convex combination of two distinct vectors with the same norm. Formally, if $x = \lambda y + (1-\lambda)z$ with $0 < \lambda < 1$ and $\norm{y}_{\JT} = \norm{z}_{\JT} = \norm{x}_{\JT}$, then $x = y = z$.
\end{definition}

A crucial geometric property in $\JT$ is the concept of separation. As we will see next, separation within the support is insufficient for extremality; we must consider the entire range.

\begin{definition}[Separated Vector]
    A vector $x \in \JT$ is said to be \textbf{separated} if for every pair of distinct nodes $\alpha, \beta \in \ran(x)$, there exists an $x$-norming partition $\mathcal{P}$ such that $\alpha$ and $\beta$ belong to different segments in $\mathcal{P}$.
\end{definition}

In the sequel, by the term that $x$ is an extreme point, we will mean an extreme point of the ball with radius $\norm{x}$. The following proposition provides a specific characterization of non-extremality in $\JT$, relating perturbations to the norming partitions.

\begin{proposition}[Characterization of Extreme Points]\label{prop.1}
    A vector $x \in \JT$ is \textbf{not} an extreme point if and only if there exists a non-zero vector $y \in \JT$ such that:
    \begin{enumerate}
        \item $\norm{x}_{\JT} = \norm{x+y}_{\JT} = \norm{x-y}_{\JT}$, and
        \item For every $x$-norming partition $\mathcal{P} = \{S_i\}$, we have $\norm{y}_{\mathcal{P}} = 0$.
    \end{enumerate}
    Note that $\norm{y}_{\mathcal{P}} = 0$ implies that for every segment $S \in \mathcal{P}$, $\sum_{\alpha \in S} y(\alpha) = 0$.
\end{proposition}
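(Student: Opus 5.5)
We prove the two implications separately. The reverse direction is immediate, and the forward direction uses the midpoint of the two extreme-breaking vectors together with a convexity argument.

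\emph{Sufficiency.} Suppose a non-zero $y$ satisfies (1). Then
\[ x=\tfrac12(x+y)+\tfrac12(x-y), \]
with $\norm{x+y}_{\JT}=\norm{x-y}_{\JT}=\norm{x}_{\JT}$ and $x+y\neq x-y$. So $x$ is not extreme. Condition (2) is not needed for this direction.

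\emph{Necessity.} Suppose $x=\lambda u+(1-\lambda)v$ with $0<\lambda<1$, $\norm{u}_{\JT}=\norm{v}_{\JT}=\norm{x}_{\JT}$ and $u\neq v$.

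First we reduce to a symmetric combination. Set $\mu=\min(\lambda,1-\lambda)$ and $y=\mu(u-v)\neq 0$. The points $x\pm y$ lie on the segment $[u,v]$; indeed
\[ x+y=x+\mu(u-v) \]
lies between $x$ and $u$, because moving from $x$ to $u$ takes the multiple $(1-\lambda)(u-v)$ and $\mu\le 1-\lambda$. The same argument places $x-y$ between $x$ and $v$.

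Next we check that the norm is constant on $[u,v]$. By convexity of the norm, every point of $[u,v]$ has norm at most $\norm{x}_{\JT}$. If some point $w$ of $[u,v]$ other than $x$ had strictly smaller norm, write $x$ as a proper convex combination of $w$ and an endpoint (or of two points of the segment lying on opposite sides of $x$). Convexity then gives $\norm{x}_{\JT}<\norm{x}_{\JT}$, a contradiction. Hence $\norm{x\pm y}_{\JT}=\norm{x}_{\JT}$, which is (1).

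For (2), let $\mathcal P$ be any $x$-norming partition. Since $\norm{\cdot}_{\mathcal P}$ is a seminorm bounded above by $\norm{\cdot}_{\JT}$,
\[ \norm{x}_{\JT}=\norm{x}_{\mathcal P}\le \tfrac12\bigl(\norm{x+y}_{\mathcal P}+\norm{x-y}_{\mathcal P}\bigr)\le \norm{x}_{\JT}. \]
So equality holds throughout, and $\norm{x+y}_{\mathcal P}=\norm{x-y}_{\mathcal P}=\norm{x}_{\mathcal P}$.

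Now we use the Hilbertian structure of $\norm{\cdot}_{\mathcal P}$. Set $a_S=\sum_{\alpha\in S}x(\alpha)$ and $b_S=\sum_{\alpha\in S}y(\alpha)$ for $S\in\mathcal P$. Then $\norm{z}_{\mathcal P}$ is the $\ell_2$-norm of the sequence of segment sums of $z$. This is a seminorm induced by the positive semidefinite form $\langle z,w\rangle_{\mathcal P}=\sum_S(\sum_S z)(\sum_S w)$. The parallelogram law gives
\[ \norm{x+y}_{\mathcal P}^2+\norm{x-y}_{\mathcal P}^2=2\norm{x}_{\mathcal P}^2+2\norm{y}_{\mathcal P}^2. \]
The left side equals $2\norm{x}_{\mathcal P}^2$, so $\norm{y}_{\mathcal P}=0$, i.e.\ $b_S=0$ for every $S\in\mathcal P$. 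This is (2), and the closing remark of the statement is just this unpacking.

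The only point requiring care is convergence. For $x\in\JT$ the family $\mathcal P$ may be infinite, and the sums $\sum_{\alpha\in S}x(\alpha)$ must be interpreted through the continuous extension used in Proposition~\ref{P1}. There $\norm{\cdot}_{\mathcal P}$ is the uniform limit of the finitely supported case, so it remains a seminorm satisfying the parallelogram law, and the argument goes through.
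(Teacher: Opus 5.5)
Your proof is correct and follows the paper's argument. Sufficiency is the same midpoint decomposition $x=\tfrac12(x+y)+\tfrac12(x-y)$. Necessity, in both versions, rests on the parallelogram identity for the Hilbertian seminorm $\norm{\cdot}_{\mathcal P}$ together with $\norm{x\pm y}_{\mathcal P}\le\norm{x\pm y}_{\JT}=\norm{x}_{\JT}$.

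Your preliminary reduction from an arbitrary $\lambda\in(0,1)$ to a symmetric decomposition, via constancy of the norm on $[u,v]$, is a step the paper omits: it starts directly from $x=\tfrac12(u+v)$, even though its definition of extremality allows arbitrary $\lambda$. The same holds for your remark on the convergence of sums along infinite segments, so your version is slightly more complete.
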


\begin{proof}
    ($\Leftarrow$) Suppose such a vector $y \neq 0$ exists. We can write $x = \frac{1}{2}(x+y) + \frac{1}{2}(x-y)$. Since $\norm{x+y}_{\JT} = \norm{x-y}_{\JT} = \norm{x}_{\JT}$, $x$ is the midpoint of a non-trivial segment on the sphere. Thus, $x$ is not an extreme point.

    ($\Rightarrow$) Suppose $x$ is not an extreme point. Then there exist distinct vectors $u, v \in \JT$ with $\norm{u} = \norm{v}_{\JT} = \norm{x}_{\JT}$ such that $x = \frac{1}{2}(u+v)$. Let $y = \frac{1}{2}(u-v) \neq 0$. Then $u = x+y$ and $v = x-y$, satisfying condition (1).

    Now, let $\mathcal{P} = \{S_i\}$ be any $x$-norming partition. For any segment $S \in \mathcal{P}$, let $x_S = \sum_{\alpha \in S} x(\alpha)$ and $y_S = \sum_{\alpha \in S} y(\alpha)$. By definition,
    \[ \norm{x}_{\JT}^2 = \sum_{S \in \mathcal{P}} x_S^2. \]
    Since $\norm{x \pm y}_{\JT} \ge \norm{x \pm y}_{\mathcal{P}}$, we have:
    \[ \norm{x \pm y}_{\JT}^2 \ge \sum_{S \in \mathcal{P}} (x_S \pm y_S)^2. \]
    Summing the inequalities for $x+y$ and $x-y$:
    \begin{align*}
        \norm{x+y}_{\JT}^2 + \norm{x-y}_{\JT}^2 &\ge \sum_{S \in \mathcal{P}} \left( (x_S + y_S)^2 + (x_S - y_S)^2 \right) \\
        &= \sum_{S \in \mathcal{P}} (2x_S^2 + 2y_S^2) \\
        &= 2\sum_{S \in \mathcal{P}} x_S^2 + 2\sum_{S \in \mathcal{P}} y_S^2 \\
        &= 2\norm{x}_{\JT}^2 + 2\norm{y}_{\mathcal{P}}^2.
    \end{align*}
    By assumption, $\norm{x+y}_{\JT} = \norm{x-y}_{\JT} = \norm{x}_{\JT}$, so the left-hand side is $2\norm{x}_{\JT}^2$. This simplifies to:
    \[ 2\norm{x}_{\JT}^2 \ge 2\norm{x}_{\JT}^2 + 2\norm{y}_{\mathcal{P}}^2 \implies 0 \ge 2\norm{y}_{\mathcal{P}}^2. \]
    Since the norm is non-negative, we must have $\norm{y}_{\mathcal{P}} = 0$.
\end{proof}

\begin{corollary}\label{C2.10} For $x\in \JT$ non extreme point the vector $y$ satisfying  the properties $(1), (2)$  in the previous proposition also satisfies  
$\supp(y) \subset \ran(x)$.
\end{corollary}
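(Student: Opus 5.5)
The plan is to argue by contradiction. Suppose $y$ satisfies (1) and (2) of Proposition \ref{prop.1}, and that there is a node $\gamma\in\supp(y)\setminus\ran(x)$. We will build a partition that witnesses $\norm{x+y}_{\JT}>\norm{x}_{\JT}$ or $\norm{x-y}_{\JT}>\norm{x}_{\JT}$, which contradicts (1).

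Fix an $x$-norming partition $\mathcal{P}$, which exists by Proposition \ref{P1}. By (2), every $S\in\mathcal{P}$ satisfies $\sum_{\alpha\in S}y(\alpha)=0$. We now modify $\mathcal{P}$ so that it does not touch $\gamma$, and then add the singleton $\{\gamma\}$.

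\textbf{Step 1: isolate $\gamma$.} For each $S\in\mathcal{P}$ with $S\not\subset\ran(x)$, replace $S$ by $S\cap\ran(x)$. This is a segment, because $\ran(x)$ is a complete subtree. The sum of $x$ over it is unchanged, since $x$ vanishes off $\ran(x)$. The resulting family $\mathcal{P}_1$ consists of segments contained in $\ran(x)$. It still satisfies $\norm{x}_{\mathcal{P}_1}=\norm{x}_{\JT}$, so it is $x$-norming, and (2) applies to it: $\sum_{S}y=0$ for every $S\in\mathcal{P}_1$. Since $\gamma\notin\ran(x)$, the family $\mathcal{P}_2=\mathcal{P}_1\cup\{\{\gamma\}\}$ is again a partition. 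Lemma \ref{L10} is available as an alternative way to trim segments, but intersecting with the complete subtree is cleaner.

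\textbf{Step 2: compute.} For $\varepsilon=\pm1$ we get
\[
\norm{x+\varepsilon y}_{\mathcal{P}_2}^2=\sum_{S\in\mathcal{P}_1}(x_S+\varepsilon y_S)^2+y(\gamma)^2=\norm{x}_{\JT}^2+y(\gamma)^2.
\]
Here $y_S=0$ for every $S\in\mathcal{P}_1$, and $x(\gamma)=0$. Since $y(\gamma)\neq0$, this gives $\norm{x\pm y}_{\JT}>\norm{x}_{\JT}$, contradicting (1). Both signs are handled at once, so there is no need to choose one.

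\textbf{Main obstacle.} The only delicate point is that $y$ need not be finitely supported. We must therefore make sure that $\norm{\cdot}_{\mathcal{P}}$ is meaningful for arbitrary $x,y\in\JT$, and that the trimmed family remains $x$-norming, so that hypothesis (2) genuinely applies to it. For the first point, $\norm{z}_{\mathcal{P}}\le\norm{z}_{\JT}$ extends by density, and sums over infinite segments converge because coordinate evaluations and segment functionals are bounded. The second point is exactly the observation in Step 1 that trimming preserves each $x_S$.
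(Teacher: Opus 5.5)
Your proof is correct and follows essentially the paper's route: take an $x$-norming partition whose segments lie in $\ran(x)$ (you justify its existence by intersecting with $\ran(x)$, which the paper leaves implicit) and adjoin the singleton $\{\gamma\}$. The only difference is the last step: since $x(\gamma)=0$, the augmented family $\mathcal{P}_2$ is itself $x$-norming, so the paper applies (2) directly to $\{\gamma\}$ to get $y(\gamma)=0$, whereas you compute $\norm{x\pm y}_{\mathcal{P}_2}^2=\norm{x}_{\JT}^2+y(\gamma)^2$ to contradict (1), which is equivalent but one step longer.
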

\begin{proof} Assume, on the contrary, that there exists $\alpha_0 \in \supp(y) \setminus \ran(x) $ and set $I_{\alpha_0} =
\{\alpha_0\}$. Choose $\mathcal{P}$  an $x$-norming partition consisting of segments contained in $\ran(x)$ and define  
$\mathcal{P}_1 = \mathcal{P} \cup \{ I_{\alpha_0} \}$. Then $\mathcal{P}_1$ remains an $x$-norming partition and violates Property $(2)$ of the previous proposition.
\end{proof}
\begin{remark}
    The strength of the above result is indicated by the following proposition, which provides a concise proof for the characterization of extreme points in the classical James space $\J$. This result was originally proved in \cite{AG2025} using a more complex argument.
\end{remark}

\begin{proposition}
    Let $x \in \J$. Then $x$ is an extreme point if and only if it is separated.
\end{proposition}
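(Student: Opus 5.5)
We prove the two implications separately. Throughout, $\J$ is the James space on $\mathbb{N}$, viewed as $\JT$ restricted to a single branch. Segments are then intervals, and $\ran(x)$ is the interval $[\min\supp x,\max\supp x]$.

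\emph{Separated implies extreme.} Assume $x$ is separated but not extreme. Proposition \ref{prop.1} gives a nonzero $y$ with $\norm{x\pm y}=\norm{x}$ and $\sum_{\alpha\in S}y(\alpha)=0$ for every segment $S$ of every $x$-norming partition. By Corollary \ref{C2.10}, $\supp(y)\subset\ran(x)$.

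First we show that every $\alpha\in\ran(x)$ lies in some segment of some $x$-norming partition. Pick $\beta\ne\alpha$ in $\ran(x)$ (if $\ran(x)$ is a single point this is trivial), and take a norming partition separating them. If $\alpha$ were uncovered, adding the singleton $\{\alpha\}$ would give another norming partition. Then either $x(\alpha)=0$, or the norm strictly increases, which is impossible. So in the bad case $x(\alpha)=0$.

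The key step is to show $y(\alpha)=0$ for each $\alpha\in\ran(x)$. Given $\alpha$, take norming partitions separating $\alpha$ from its neighbours $\alpha-1$ and $\alpha+1$ (when these are in the range). In a partition separating $\alpha$ from $\alpha+1$, the segment containing $\alpha$ ends at $\alpha$. In one separating $\alpha$ from $\alpha-1$, the segment containing $\alpha$ starts at $\alpha$. From the vanishing sums we get $\sum_{[a,\alpha]}y=0$ and $\sum_{[\alpha,b]}y=0$.

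We then induct from the left end $m=\min\supp x$. I expect to show that the partial sums $\sum_{[m,k]}y$ vanish for all $k$, and hence $y\equiv0$. The cleanest route is to prove that, for each $k$, there is a norming partition in which some segment starts at $m$ and ends at $k$, or one containing a segment ending at $k$ whose sum is already known to vanish.

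This is the main obstacle. In one dimension I would handle it by a convexity and exchange argument. If $P_1$ has a segment $[a,\alpha]$ and $P_2$ has a segment $[\alpha+1,b']$, then cutting both at $\alpha$ and recombining the left part of $P_1$ with the right part of $P_2$ gives a partition whose value is at most $\norm{x}^2$. Adding the symmetric recombination, the two values sum to at least $2\norm{x}^2$, because the pieces are the same multiset. Hence both recombinations are norming. Repeating such splices produces a single norming partition in which every point of $\ran(x)$ is an endpoint of a cut. The segments of that partition are then singletons, so $y(\alpha)=0$ for every $\alpha$, and $y=0$, a contradiction. I would verify the splicing identity carefully, handling a segment of $P_1$ that crosses the cut point by splitting it at the cut.

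\emph{Extreme implies separated.} Suppose $\alpha<\beta$ in $\ran(x)$ are not separated by any norming partition. By convexity of intervals, every norming partition then contains a segment covering $[\alpha,\beta]$; in particular $\alpha$ and $\alpha+1$ are never separated. Set $y=\varepsilon(e_\alpha-e_{\alpha+1})$. Every norming partition contains both points in one segment, so it sees $x\pm y$ exactly as it sees $x$. Hence $\norm{x\pm y}\ge\norm{x}$.

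For the reverse inequality we use Proposition \ref{P1}(ii). If $\norm{x\pm y_n}>\norm{x}$ along $\varepsilon_n\to0$, the corresponding almost norming partitions have a pointwise limit that is $x$-norming, and by Remark \ref{Re1} that limit separates $\alpha$ from $\alpha+1$ for large $n$. This contradicts the assumption. Moreover, a partition that keeps $\alpha,\alpha+1$ together gives $x\pm y$ the same value as $x$, so it cannot exceed $\norm{x}$. Therefore $\norm{x\pm y}=\norm{x}$ for small $\varepsilon$, and $x$ is not extreme.
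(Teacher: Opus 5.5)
Your ``extreme $\Rightarrow$ separated'' half is fine. It is the argument of Proposition \ref{prop:not_sep_not_ext} specialised to $\J$, with the uniform gap $\delta$ replaced by a contradiction through Proposition \ref{P1}(ii). Partitions $\mathcal{P}_n$ norming $x\pm y_n$ satisfy $\norm{x}_{\mathcal{P}_n}>\norm{x}-\sqrt2|\varepsilon_n|$. Also, ``$\alpha,\alpha+1$ lie in one segment'' is the single-coordinate condition $e(\alpha+1)=1$, so its failure passes to the limit.

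The gap is in the other half. There you aim to build one $x$-norming partition consisting of singletons, i.e.\ to prove $\norm{x}_{\J}=\norm{x}_2$, and the splicing does not achieve this. The exchange identity only applies at a cut shared by both partitions. A splice then keeps the cuts of one partition on the left and those of the other on the right, so cuts coming from different norming partitions cannot be accumulated.

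For instance, let $Q$ be norming with singletons $\{m\},\dots,\{k\}$ and with $k+1,k+2$ in one segment. Let $P$ be norming with the segment $[a,k+1]$, $a\le k$. Every cut common to $P$ and $Q$ to the left of $k+2$ lies to the left of $a$. Any splice there yields either $[a,k+1]$, losing the cuts inside it, or $\{a\},\dots,\{k\}$ followed by the segment of $Q$ through $k+1,k+2$, losing that cut.

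Splitting a crossing segment, as you suggest, replaces $(A+B)^2$ by $A^2+B^2$ and destroys the ``same multiset'' equality. Refining $[a,k+1]$ into singletons would need $\bigl(\sum_{j=a}^{k+1}x(j)\bigr)^2=\sum_{j=a}^{k+1}x(j)^2$, which nothing in your argument provides. In effect you are trying to re-prove (2)$\Rightarrow$(3) of Theorem \ref{thm:James_space}, a much deeper fact than what is needed. For infinite range you would also need a limiting argument.

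The missing observation is that the relation you already wrote down finishes the proof at once. Let $k_0=\min\supp(y)$, which lies in $\ran(x)$. Take an $x$-norming partition separating $k_0$ from $k_0+1$; any norming partition will do if $k_0=\max\ran(x)$, and you add the singleton $\{k_0\}$ if $k_0$ is uncovered. The segment $S$ containing $k_0$ meets $\supp(y)$ only at $k_0$. Its points to the left of $k_0$ miss $\supp(y)$ by minimality, and $S$ contains no point of $\ran(x)$ to the right of $k_0$. Hence $\sum_{S}y=y(k_0)\neq0$, contradicting Proposition \ref{prop.1}(2).

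Equivalently, your left-to-right induction is trivial: if $y=0$ to the left of $\alpha$, then $0=\sum_{[a,\alpha]}y=y(\alpha)$, whatever $a$ is. This is the paper's proof. Neither a norming partition with prescribed endpoints nor any splicing is needed.
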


\begin{proof}
    The ``only if'' direction follows  from Proposition \ref{prop:not_sep_not_ext} stated below (non-separated implies non-extreme).
    
    Assume now that $x$ is separated but not extreme. Then there exists $y \in \J$ satisfying the conclusion of Proposition \ref{prop.1}. In particular, $y \neq 0$ and
    \[ \sum_{k \in S} y(k) = 0 \]
    for any $S \in \mathcal{P}$, where $\mathcal{P}$ is any $x$-norming partition.
    
    Set $k_0 = \min \supp(y)$. From Corollary \ref{C2.10} we have  $\supp(y) \subset \ran(x)$. Since $x$ is separated, there exists an $x$-norming partition $\mathcal{P}$ separating $k_0$ and $k_0 + 1$.
    
    Let $S \in \mathcal{P}$ be the segment such that $k_0 \in S$. Since the partition separates $k_0$ and $k_0+1$, $k_0$ is the maximum element of $S$. Consequently, $y|_S$ is different from zero only on $k_0$ (since $k_0 = \min \supp(y)$). Hence,
    \[ \sum_{k \in S} y(k) = y(k_0) \neq 0. \]
    This contradicts the property that the sum of $y$ must vanish on all segments of an $x$-norming partition, completing the proof.
\end{proof}

\section{Necessary Conditions for Extremality}

We first establish that the property of being ``separated'' is a necessary condition for extremality in $\JT$.

\begin{proposition}\label{prop:not_sep_not_ext}
    Let $x \in \JT$. If $x$ is not separated, then $x$ is not an extreme point.
\end{proposition}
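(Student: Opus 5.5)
Suppose $x$ is not separated. Then there are distinct nodes $\alpha,\beta\in\ran(x)$ such that in \emph{every} $x$-norming partition $\alpha$ and $\beta$ lie in the same segment. In particular they lie in a common segment, so they are comparable; say $\alpha<\beta$. The plan is to use the perturbation
\[ y_t = t\,(e_\alpha - e_\beta),\qquad t>0 \text{ small}, \]
and apply the ($\Leftarrow$) direction of Proposition \ref{prop.1}. For that it suffices to show $\norm{x\pm y_t}_{\JT}=\norm{x}_{\JT}$ for some $t>0$. Condition (2) of that proposition is automatic, since every segment of an $x$-norming partition contains both $\alpha$ and $\beta$ or neither.

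First, the lower bound. Take any $x$-norming partition $\mathcal{P}$ (it exists by Proposition \ref{P1}(i)). Every segment of $\mathcal{P}$ contains both $\alpha,\beta$ or neither, so $\norm{x\pm y_t}_{\mathcal{P}}=\norm{x}_{\mathcal{P}}=\norm{x}_{\JT}$. Hence $\norm{x\pm y_t}_{\JT}\ge\norm{x}_{\JT}$.

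For the upper bound, I argue by contradiction with a compactness argument. Suppose there are $t_n\downarrow 0$, signs $\sigma_n\in\{\pm1\}$ and partitions $\mathcal{P}_n$ with $\norm{x+\sigma_n y_{t_n}}_{\mathcal{P}_n}>\norm{x}_{\JT}$.
\begin{itemize}
\item If some segment of $\mathcal{P}_n$ contains both $\alpha$ and $\beta$, then $\norm{x+\sigma_n y_{t_n}}_{\mathcal{P}_n}=\norm{x}_{\mathcal{P}_n}\le\norm{x}_{\JT}$, which is impossible. So $\mathcal{P}_n$ does not put $\alpha,\beta$ in a common segment. This includes the case where one of them is not covered by $\mathcal{P}_n$.
\item By the seminorm triangle inequality,
\[ \norm{x}_{\mathcal{P}_n}\ge\norm{x+\sigma_n y_{t_n}}_{\mathcal{P}_n}-\norm{y_{t_n}}_{\JT}>\norm{x}_{\JT}-2t_n\norm{e_\alpha}_{\JT}. \]
So $(\mathcal{P}_n)$ is an almost norming sequence.
\item Proposition \ref{P1}(ii) then gives a subsequence $\mathcal{P}_{n_k}$ converging pointwise, in the coding $(v,e)$, to an $x$-norming partition $\mathcal{P}$.
\item By hypothesis $\alpha$ and $\beta$ lie in one segment of $\mathcal{P}$. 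This is expressed by finitely many coordinates: $v(\gamma)=1$ and $e(\gamma)=1$ for all $\gamma\in(\alpha,\beta]$, together with $v(\alpha)=1$.
\item Pointwise convergence in the product topology forces these coordinates to hold for $\mathcal{P}_{n_k}$ for all large $k$. Then $\alpha,\beta$ lie in a common segment of $\mathcal{P}_{n_k}$, contradicting the first bullet.
\end{itemize}
Hence there is $t>0$ with $\norm{x\pm y_t}_{\JT}\le\norm{x}_{\JT}$. Combined with the lower bound, Proposition \ref{prop.1} shows $x$ is not extreme.

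I expect the compactness step to be the main obstacle. One must check that membership of $\alpha,\beta$ in a common segment really is an open condition, determined by finitely many coordinates of the encoding, so that it transfers from the limit $\mathcal{P}$ back to $\mathcal{P}_{n_k}$. One must also handle the convention for a norming partition that leaves $\alpha$ or $\beta$ uncovered. In the latter case, adding the singleton $\{\alpha\}$ keeps the partition norming only if $x(\alpha)=0$, and otherwise it would increase the norm. This is consistent with the hypothesis as I will formulate it: "same segment" in every norming partition.
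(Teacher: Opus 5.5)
Your proof is correct and follows essentially the same route as the paper: the same perturbation $t(e_\alpha-e_\beta)$, with compactness of the partition space $\Pi$ supplying the needed uniform gap. The paper makes the gap explicit as $\delta$, using the maximum of $f_x$ over the closed set of partitions separating the two nodes, and then chooses $\epsilon$ explicitly; you instead argue by contradiction through Proposition \ref{P1}(ii) and the openness of the finitely-determined condition $e(\gamma)=1$ for $\gamma\in(\alpha,\beta]$, which also handles partitions leaving $\alpha$ or $\beta$ uncovered without separate comment.
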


\begin{proof}
    Suppose $x$ is not separated. By definition, there exists a pair of distinct nodes $u, v \in \ran(x)$ such that for every $x$-norming partition $\mathcal{P}$, $u$ and $v$ belong to the same segment $S \in \mathcal{P}$.
    
    Let $\mathbb{P}_{\text{sep}}$ denote the set of all admissible partitions $\mathcal{P}$ in which $u$ and $v$ belong to different segments. Notice that always could assume that $u,v$ belong that some element of the $x$-norming partition. Indeed if one or both do not belong that means that they belong to $\ran(x) \setminus \supp(x)$ and we can add the singleton(s) in the partition. As in the proof of Proposition \ref{P1}(i) we consider  
    \[ \mathbb{P}_{\text{sep}}\subset \Pi  \quad \text{and it is  easy to check that  it is closed.} \]
    Since the function $f_x :  \mathbb{P}_{\text{sep}} \to \mathbb{R} $ is continuous (Prop. \ref{P1}(i)) we conclude that  there exists $\delta > 0$ such that:
    \[ \sup_{\mathcal{P} \in \mathbb{P}_{\text{sep}}} \norm{x}_{\mathcal{P}}^2 = \norm{x}_{\JT}^2 - \delta. \]
    
    Let $\epsilon > 0$ be sufficiently small such that $4\epsilon \norm{x} + 2\epsilon^2 < \delta$. Define the perturbation $y = \epsilon(e_u - e_v)$.
    
    We claim that $\norm{x \pm y}_{\JT} = \norm{x}_{\JT}$. Indeed, for any partition $\mathcal{P}$:
    \begin{enumerate}
        \item If $\mathcal{P}$ puts $u, v$ in the same segment $S$, then $\sum_{S} y = \epsilon - \epsilon = 0$. Thus $\norm{x \pm y}_{\mathcal{P}} = \norm{x}_{\mathcal{P}}$. Since the supremum of $\norm{x}_{\mathcal{P}}$ is attained on such partitions, $\norm{x \pm y}_{\JT} \ge \norm{x}_{\JT}$.
        \item If $\mathcal{P}$ separates $u, v$, then:
        \begin{align*}
            \norm{x \pm y}_{\mathcal{P}}^2 &\le \norm{x}_{\mathcal{P}}^2 + 4\epsilon\norm{x}_{\mathcal{P}} + 2\epsilon^2 \\
            &\le (\norm{x}_{\JT}^2 - \delta) + 4\epsilon\norm{x}_{\JT} + 2\epsilon^2.
        \end{align*}
        By the choice of $\epsilon$, this quantity is strictly less than $\norm{x}_{\JT}^2$.
    \end{enumerate}
    Thus, $\norm{x+y}_{\JT} = \norm{x-y}_{\JT} = \norm{x}_{\JT}$, and since $y \neq 0$, $x$ is not an extreme point.
\end{proof}
\begin{example}[Separated Support does not imply Separated Range]
    Consider the vector $x \in \JT$ defined by:
    \[ x = e_{\emptyset} + e_{00} + e_{01} \]
    
    \textbf{1. Norm and Partitions:}
    The support is $\supp(x) = \{ \emptyset, 00, 01 \}$, and the range is $\ran(x) = \{ \emptyset, 0, 00, 01 \}$.
    To compute the norm, the root $\emptyset$ must merge with one of its descendants to maximize the square sum. There are two symmetric optimal partitions:
    \begin{itemize}
        \item $\mathcal{P}_1 = \{ \{\emptyset, 0, 00\}, \{01\} \}$. 
        Sum of first segment: $1+0+1=2$. Total square: $2^2 + 1^2 = 5$.
        \item $\mathcal{P}_2 = \{ \{\emptyset, 0, 01\}, \{00\} \}$. 
        Sum of first segment: $1+0+1=2$. Total square: $2^2 + 1^2 = 5$.
    \end{itemize}
    Thus, $\norm{x}_{\JT} = \sqrt{5}$.
    
    \textbf{2. The Support is Separated:}
    We verify that every pair in $\supp(x)$ can be separated:
    \begin{itemize}
        \item Pair $(\emptyset, 00)$: Separated by $\mathcal{P}_2$ (where $\emptyset$ merges with $01$).
        \item Pair $(\emptyset, 01)$: Separated by $\mathcal{P}_1$ (where $\emptyset$ merges with $00$).
        \item Pair $(00, 01)$: Separated in both partitions (they are incomparable).
    \end{itemize}
    
    \textbf{3. The Range is NOT Separated:}
    Consider the pair $(\emptyset, 0) \subset \ran(x)$. 
    In $\mathcal{P}_1$, they belong to the segment $\{\emptyset, 0, 00\}$.
    In $\mathcal{P}_2$, they belong to the segment $\{\emptyset, 0, 01\}$.
    Any partition that separates $\emptyset$ from $0$ would require isolating $\emptyset$ (sum 1) and treating the subtrees at $00$ and $01$ separately (sums 1 and 1), yielding a total square sum of $1^2 + 1^2 + 1^2 = 3 < 5$.
    Therefore, $\emptyset$ and $0$ belong to the same segment in \textbf{every} $x$-norming partition.
    
    \textbf{Conclusion:}
    Although $x$ is separated on its support, it is not separated on its range. Consequently, $x$ is not an extreme point. Indeed, the perturbation $y = \epsilon(e_{\emptyset} - e_0)$, for a small $\epsilon > 0$, satisfies $\norm{x \pm y}_{\JT} = \norm{x}_{\JT}$ because the sum of $y$ vanishes on any segment containing both $\emptyset$ and $0$.
\end{example}
\begin{remark}
    As observed by one of the reviewers, an interesting consequence of the tree geometry in the previous example is the exact bound on the perturbation parameter $\epsilon$. For the vector $x = e_{\emptyset} + e_{00} + e_{01}$ and the perturbation $y = \epsilon(e_{\emptyset} - e_0)$, the condition $\norm{x \pm y}_{\JT} = \norm{x}_{\JT}$ holds if and only if $|\epsilon| \le \phi^{-1}$, where $\phi$ is the golden ratio. 
    
    Indeed, while the optimal merged partitions yield exactly $\norm{x \pm y}_{\mathcal{P}}^2 = 5$, the partition consisting entirely of singletons must also not exceed $\norm{x}^2 = 5$. This requirement gives the inequality:
    \[ (1 \pm \epsilon)^2 + (\mp \epsilon)^2 + 1^2 + 1^2 \le 5 \]
    which simplifies to $\epsilon^2 \pm \epsilon - 1 \le 0$. The positive root of this quadratic equation yields the precise bound $|\epsilon| \le \frac{\sqrt{5}-1}{2} = \phi^{-1}$.
\end{remark}

\section{Sufficient Conditions for Extremality}

\subsection{$\ell_2$-Behavior and Extremality}

We begin by observing that vectors whose $\JT$-norm coincides with their $\ell_2$-norm are always extreme points. This provides a fundamental class of extreme points.

\begin{proposition}\label{prop:l2_extreme}
    Let $x \in \JT$. If $\norm{x}_{\JT} = \norm{x}_2$, then $x$ is an extreme point of the ball of radius $\norm{x}_{\JT}$.
\end{proposition}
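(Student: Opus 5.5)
The plan is to use the partition into singletons together with strict convexity of the Hilbert norm. Let $\mathcal{P}_0=\{\{\alpha\}:\alpha\in\Tree\}$ be the partition of $\Tree$ into singletons. For every $z\in\JT$ we have $\norm{z}_{\mathcal{P}_0}=\norm{z}_2$, hence
\[ \norm{z}_2\le\norm{z}_{\JT}. \]
This is clear for finitely supported $z$ and passes to the completion by density. In particular $\JT$ embeds contractively into $\ell_2(\Tree)$, and evaluation at each node is continuous. The hypothesis $\norm{x}_{\JT}=\norm{x}_2$ says exactly that $\mathcal{P}_0$ is an $x$-norming partition.

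Now suppose $x$ is not extreme. By Proposition \ref{prop.1} there is a non-zero $y\in\JT$ with $\norm{x\pm y}_{\JT}=\norm{x}_{\JT}$ and $\norm{y}_{\mathcal{P}}=0$ for every $x$-norming partition $\mathcal{P}$. Applying this with $\mathcal{P}=\mathcal{P}_0$ gives
\[ 0=\norm{y}_{\mathcal{P}_0}=\norm{y}_2, \]
so $y(\alpha)=0$ for every $\alpha\in\Tree$. Since $\JT$ is a space of functions on $\Tree$ (the coordinate functionals are continuous and the completion is realized inside $\ell_2(\Tree)$), this forces $y=0$, a contradiction. Hence $x$ is an extreme point.

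As an alternative that avoids Proposition \ref{prop.1}, write $x=\tfrac12(u+v)$ with $\norm{u}_{\JT}=\norm{v}_{\JT}=\norm{x}_{\JT}$. Then
\[ \norm{x}_2\le\tfrac12\left(\norm{u}_2+\norm{v}_2\right)\le\tfrac12\left(\norm{u}_{\JT}+\norm{v}_{\JT}\right)=\norm{x}_{\JT}=\norm{x}_2, \]
so equality holds throughout. In particular $\norm{u}_2=\norm{v}_2=\norm{x}_2$, and strict convexity of $\ell_2$ gives $u=v$ in $\ell_2$, hence $u=v=x$.

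The only point needing care is justifying that elements of the completion $\JT$ are genuine functions on $\Tree$, so that vanishing of all coordinates means the vector is zero. This is the injectivity of the canonical map $\JT\to\ell_2(\Tree)$. I would argue it using that the nodes, ordered compatibly with the tree, form a Schauder basis of $\JT$, or more simply by noting that $\JT$ is the standard space of functions with finite norm, as in the paper's convention. I expect this to be the main (though minor) obstacle; everything else is immediate.
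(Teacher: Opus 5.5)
Your main argument is correct and is essentially the paper's proof. The paper also applies Proposition \ref{prop.1} to a singleton partition, but it uses only the singletons of $\ran(x)$ and then needs Corollary \ref{C2.10} to get $y=0$ off $\ran(x)$. Your choice of all singletons of $\Tree$ is $x$-norming directly, since $\norm{x}_{\mathcal{P}_0}=\norm{x}_2=\norm{x}_{\JT}$. It therefore makes that corollary unnecessary; the corollary is itself proved by exactly this device of adding singletons outside the range.

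Your alternative via strict convexity is a genuinely different route. It never mentions norming partitions or Proposition \ref{prop.1}. In effect it proves a general fact: if a Banach space $X$ injects contractively into a strictly convex space $Y$, then any $x$ with $\norm{x}_Y=\norm{x}_X$ is extreme in the ball of radius $\norm{x}_X$. The paper's route instead stays in its perturbation language (the segment sums of $y$ vanish on every $x$-norming partition), which it reuses for separated vectors later.

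The injectivity you flag as the main obstacle is used tacitly by the paper as well, since it concludes $y=0$ from vanishing coordinates. Your Schauder basis remark settles it once made concrete. Enumerate $\Tree$ level by level; then every initial set of the enumeration is downward closed, hence a complete subtree. Lemma \ref{lem1}(1) then gives basis projections of norm one, so $(e_\alpha)$ is a monotone basis of $\JT$, and a vector all of whose coordinates vanish is $0$.
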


\begin{proof}
    Let $y \in \JT$ be such that for every segment $s \in \mathcal{P}$, where $\mathcal{P}$ is an $x$-norming partition, it satisfies $\sum_{\alpha \in s} y(\alpha) = 0$. From the assumption, the partition consisting of singletons of $\ran(x)$ 
     is an $x$-norming partition. Applying the condition to singleton segments $\{ \alpha \}$, we obtain $y(\alpha) = 0$ for all $\alpha \in \ran(x)$.
    Corollary \ref{C2.10} yields that  $\supp(y) \subset \ran(x)$, so we conclude that $y = 0$. Proposition \ref{prop.1} yields that $x$ is an extreme point.
\end{proof}

\subsection{Connection to the James Space $\J$}
The local structure of $\JT$ along branches is isometric to the classical James space $\J$. Recent results by Argyros and González provide a complete characterization of extremality in $\J$ which had  been proved by S. Bellenot \cite{B} for a different norm of $\J$.

\begin{theorem}[Argyros \& González, 2025 \cite{AG2025}]\label{thm:James_space}
    Let $x \in \J$. The following are equivalent:
    \begin{enumerate}
        \item $x$ is an extreme point of the unit ball of $\J$.
        \item $x$ is separated and $\norm{x}_{\J}=1$.
        \item $\norm{x}_{\J} = \norm{x}_2 = 1$.
    \end{enumerate}
\end{theorem}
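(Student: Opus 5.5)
The equivalence (1)$\Leftrightarrow$(2) has already been handled in this paper. The ``only if'' direction is Proposition \ref{prop:not_sep_not_ext}, specialized to a single branch; that proof uses only the compactness of the set of partitions, which works verbatim for $\J$. The converse is the argument in the proposition on $\J$ proved earlier via Proposition \ref{prop.1} and Corollary \ref{C2.10}. So I will prove (3)$\Rightarrow$(1) and (2)$\Rightarrow$(3). For (3)$\Rightarrow$(1), I would repeat the argument of Proposition \ref{prop:l2_extreme}. If $\norm{x}_\J=\norm{x}_2$, then the singleton partition of $\ran(x)$ is $x$-norming. Any $y$ as in Proposition \ref{prop.1} must therefore vanish on each singleton of $\ran(x)$, and by Corollary \ref{C2.10} also outside $\ran(x)$. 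Hence $y=0$, and $x$ is extreme.

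The main step is (2)$\Rightarrow$(3): a separated $x$ with $\norm{x}_\J=1$ satisfies $\norm{x}_2=1$. My plan is to show that, for separated $x$, every pair of consecutive integers $k,k+1\in\ran(x)$ is split by every $x$-norming partition, and not merely by some. I would first establish a ``gluing'' observation. Let $\mathcal{P}$ and $\mathcal{Q}$ be $x$-norming partitions and $m$ a cut point. The partition that agrees with $\mathcal{P}$ to the left of $m$ and with $\mathcal{Q}$ to the right is admissible, provided both split the pair $(m,m+1)$. In that case $\norm{x}_\mathcal{P}^2$ decomposes as the left contribution plus the right contribution. Comparing the four mixed partitions then shows that the left parts have equal value and the right parts have equal value, so the glued partitions are norming as well.

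Using this, I would build an $x$-norming partition that splits all consecutive pairs simultaneously. Start from an enumeration of the consecutive pairs in $\ran(x)$. For the $n$-th pair, pick a norming partition $\mathcal{P}_n$ that splits it; this is where separation is used. Then glue inductively. The current partition already splits pairs $1,\dots,n-1$; I take its pieces on each side of the cut at pair $n$, using the fact that a segment of the current partition containing pair $n$ can be replaced by the restrictions of $\mathcal{P}_n$ on a window between two previously established cuts. The window-replacement version of the gluing lemma keeps the partition norming. For infinite support I would then pass to a limit with Proposition \ref{P1}(ii), together with Remark \ref{Re1}, to obtain a norming partition made of singletons. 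This yields $\norm{x}_\J^2=\sum_k x(k)^2=\norm{x}_2^2$.

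I expect the window-gluing step to be the main obstacle. In a window between two cuts common to both partitions, I must check that the contributions of the two partitions agree, which needs both sides of the window to be split by both partitions. The fix I would try is to always choose cuts from the finite set already split by the current partition, together with the $n$-th pair. I would then apply the equality-of-contributions argument to three regions instead of two: left, window, and right.
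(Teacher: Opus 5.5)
Your treatment of (1)$\Leftrightarrow$(2) and (3)$\Rightarrow$(1) is what the paper itself supplies: the proposition on $\J$, and the argument of Proposition~\ref{prop:l2_extreme}. The paper gives no proof of (2)$\Rightarrow$(3); it quotes the theorem from \cite{AG2025}. So that implication rests entirely on your argument, and there it has a genuine gap.

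First, the intermediate claim you announce is false. Take $x=\frac{2}{3}e_1-\frac{1}{3}e_2+\frac{2}{3}e_3$. Every interval $I$ satisfies $(\sum_{k\in I}x(k))^2\le\sum_{k\in I}x(k)^2$, so $\norm{x}_\J=\norm{x}_2=1$, the singleton partition is norming, and $x$ is separated. Yet the one-segment partition $\{\{1,2,3\}\}$ is also $x$-norming and splits no pair. You do not actually need that claim, since what you then build is a single norming partition splitting all pairs. But that construction fails exactly at the step you flag.

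The only tool you have is gluing at a cut shared by two norming partitions. $\mathcal{P}_n$ is only known to split the $n$-th pair, so it may share no cut with the current partition $\mathcal{R}$ except the ends of $\ran(x)$. Already for $x=e_1-e_3$, the norming partitions $\{\{1\},\{2,3\}\}$ and $\{\{1,2\},\{3\}\}$ have no common interior cut, so gluing yields nothing new, although $\{\{1\},\{2\},\{3\}\}$ is norming. Replacing the block $I$ of $\mathcal{R}$ by $\{S\cap I: S\in\mathcal{P}_n\}$ changes $\norm{x}_{\mathcal{R}}^2$ by $\sum_{S\in\mathcal{P}_n}(\sum_{S\cap I}x)^2-(\sum_I x)^2$. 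Your three-region comparison shows this vanishes only when $\mathcal{P}_n$ also cuts at both ends of $I$. Choosing those ends among cuts of $\mathcal{R}$ does nothing to make them cuts of $\mathcal{P}_n$. So the step that merges cuts coming from \emph{different} norming partitions is missing, and that is the real content of (2)$\Rightarrow$(3).

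What you need is an uncrossing lemma. Write $\sigma_k=\sum_{i\le k}x(i)$ and let $(u,v]$ denote the block $\{u+1,\dots,v\}$. Suppose $\mathcal{Q}$ contains $(m,b]$ and $\mathcal{P}$ contains $(c,d]$ with $m<c<b<d$.
\begin{itemize}
\item Exchange tails: take $\mathcal{P}$ up to $c$, then $(c,b]$, then $\mathcal{Q}$ after $b$; and take $\mathcal{Q}$ up to $m$, then $(m,d]$, then $\mathcal{P}$ after $d$. The two new partitions have squared values summing to $\norm{x}_{\mathcal{P}}^2+\norm{x}_{\mathcal{Q}}^2+2(\sigma_c-\sigma_m)(\sigma_d-\sigma_b)$, so maximality gives $(\sigma_c-\sigma_m)(\sigma_d-\sigma_b)\le0$.
\item Optimality of $\mathcal{P}$ against inserting the cut $b$ gives $(\sigma_b-\sigma_c)(\sigma_d-\sigma_b)\ge0$.
\item Optimality of $\mathcal{Q}$ against inserting the cut $c$ gives $(\sigma_c-\sigma_m)(\sigma_b-\sigma_c)\ge0$.
\end{itemize}
These three sign conditions force one of the two insertions to leave the value unchanged. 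Iterate this inside the $\mathcal{P}$-block containing a prescribed cut of $\mathcal{Q}$, and glue at common cuts. This adds any cut of $\mathcal{Q}$ to $\mathcal{P}$ while keeping every cut of $\mathcal{P}$ and the norming property. With this lemma, your induction and the passage to the limit via Proposition~\ref{P1}(ii) and Remark~\ref{Re1} go through. Without it, (2)$\Rightarrow$(3) is not proved.
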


This theorem suggests that for ``branch-like'' structures in $\JT$, extremality is tightly coupled with the $\ell_2$-norm condition.

\subsection{Separated Vectors on Special Supports}

We now examine vectors supported on specific substructures of the tree. We show that for these cases, the separated property forces the norm to be the $\ell_2$-norm, thereby implying extremality.

\begin{proposition}\label{prop:special_supports}
    Let $x \in \JT$.
    \begin{enumerate}[label=(\alph*)]
        \item If $x$ is supported on a single branch and is separated, then $\norm{x}_{\JT} = \norm{x}_2$.
        \item If $x$ is supported by a family of pairwise incomparable segments and is separated, then $\norm{x}_{\JT} = \norm{x}_2$. Consequently, $x$ is an extreme point.
    \end{enumerate}
\end{proposition}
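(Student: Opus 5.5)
For part (a), I will reduce to the James space $\J$. If $\supp(x)$ lies in a single branch $b$, then $\ran(x)\subset b$ is a segment. By Lemma \ref{lem1}, for any partition $\mathcal{P}$ we may replace each $S\in\mathcal{P}$ by $S\cap\ran(x)$ without changing $\norm{x}_{\mathcal{P}}$. Hence $\norm{x}_{\JT}$ equals the James norm of the restriction of $x$ to $b$, identified with $\mathbb{N}$. Under this identification, $x$-norming partitions in $\JT$ correspond (after intersecting with $\ran(x)$) to norming interval partitions in $\J$. So if $x$ is separated in $\JT$, every pair of consecutive points of $\ran(x)$ is split by some norming partition, and the restricted vector is separated in $\J$. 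Theorem \ref{thm:James_space} then gives $\norm{x}_{\J}=\norm{x}_2$, which yields (a).

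Alternatively, I can argue directly. Fix a norming partition $\mathcal{P}$ and any segment $S\in\mathcal{P}$, with $S\subset\ran(x)$, that contains two or more points. I want to show that the singleton partition of $S$ contributes as much as $S$ does. The idea is that separation of two consecutive nodes $\gamma<\gamma^+$ in $S$ gives a norming partition $\mathcal{Q}$ cutting $S$ there. I would then combine $\mathcal{P}$ and $\mathcal{Q}$ by a cut-and-paste exchange: on a line, the union of cut points of two optimal interval partitions is again optimal, via the standard inequality $(a+b)^2+ (c)^2$ versus $a^2+(b+c)^2$ in the exchange argument. Iterating over all consecutive pairs gives an optimal partition into singletons, so $\norm{x}_{\JT}^2=\sum x(\alpha)^2$. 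I expect this exchange step to be the main obstacle, and I will mainly rely on the citation to Theorem \ref{thm:James_space} instead.

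For part (b), let $\supp(x)$ be contained in pairwise incomparable segments $I_1,I_2,\dots$, and take $\ran(x)$ to be the union of these segments. Write $x=\sum_j x_j$ with $x_j=x|_{I_j}$. The key claim is that $\norm{x}_{\JT}^2=\sum_j\norm{x_j}_{\JT}^2$. One inequality is immediate by concatenating norming partitions of the $x_j$, which are disjoint. For the reverse inequality, a segment of $\Tree$ can meet at most one of the pairwise incomparable segments $I_j$: if it met $I_j$ and $I_k$ it would contain comparable points from both. Therefore each segment restricted to $\ran(x)$ lies in a single $I_j$, and splitting the partition by $j$ gives the bound. The same decomposition shows that a norming partition for $x$ restricts, by Lemma \ref{lem1}(2), to norming partitions for each $x_j$. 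Consequently each $x_j$ is separated, since any pair in $I_j=\ran(x_j)$ is split by a restriction of some $x$-norming partition. Part (a) applied to each $x_j$ gives $\norm{x_j}_{\JT}=\norm{x_j}_2$, so $\norm{x}_{\JT}^2=\sum_j\norm{x_j}_2^2=\norm{x}_2^2$. Extremality then follows from Proposition \ref{prop:l2_extreme}. If the family is infinite, I handle it via the same argument and the density of finite sums, since the norms add.
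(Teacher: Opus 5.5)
Your proposal is correct and follows the paper's proof. In (a) you reduce to $\J$ via the isometry for branch-supported vectors and invoke Theorem \ref{thm:James_space}; in (b) you use that a segment meets at most one of the pairwise incomparable segments to decouple the norm, pass separation to each piece, apply (a), and finish with Proposition \ref{prop:l2_extreme}. Your justification that restricted norming partitions stay norming comes from the decomposition itself, or from Lemma \ref{lem1}(2) only after first intersecting the partition with $\ran(x)$, and the sketched exchange argument in (a) is not needed.
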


\begin{proof}
    \textbf{(a)} Suppose $\supp(x) \subset \mathbf{b}$ for some branch $\mathbf{b}$ of $\Tree$. The space of functions supported on $\mathbf{b}$ equipped with the $\JT$ norm is isometric to the James space $\J$. Since $x$ is separated in $\JT$, its restriction to $\mathbf{b}$ is separated in $\J$. By Theorem \ref{thm:James_space} (Argyros \& González), this implies $\norm{x}_{\J} = \norm{x}_2$. Since the norms coincide on the branch, $\norm{x}_{\JT} = \norm{x}_2$. By Proposition \ref{prop:l2_extreme}, $x$ is an extreme point.

    \textbf{(b)} Let $\supp(x) \subset \bigcup_{i \in I} S_i$, where $\{S_i\}_{i \in I}$ is a family of pairwise incomparable segments (i.e., no node in $S_i$ is an ancestor or descendant of a node in $S_j$ for $i \neq j$).
    
    Any segment $S$ in an admissible partition $\mathcal{P}$ of $\Tree$ is linearly ordered. Therefore, $S$ can intersect at most one segment $S_i$ from the incomparable family (or parts of it). If $S$ intersects multiple $S_i$'s, there would be comparable nodes in distinct $S_i$'s, contradicting their incomparability.
    
    Consequently, the norm calculation decouples:
    \[ \norm{x}_{\JT}^2 = \sum_{i \in I} \norm{x|_{S_i}}_{\JT}^2. \]
    Since $x$ is separated, each restriction $x|_{S_i}$ is separated. Note that a single segment $S_i$ is a subset of a branch, so $x|_{S_i}$ can be viewed as a vector in $\J$. Applying Part (a) (or Theorem \ref{thm:James_space}) to each segment, we have $\norm{x|_{S_i}}_{\JT} = \norm{x|_{S_i}}_2$.
    
    Summing these contributions:
    \[ \norm{x}_{\JT}^2 = \sum_{i \in I} \norm{x|_{S_i}}_2^2 = \norm{x}_2^2. \]
    Since $\norm{x}_{\JT} = \norm{x}_2$, $x$ is an extreme point by Proposition \ref{prop:l2_extreme}.
\end{proof}

\begin{proposition}[Well-Founded Support]
    Let $x \in \JT$ be a separated vector such that its range, $\ran(x)$, is a well-founded subtree of $\Tree$ (i.e., it contains no infinite branches). Then $x$ is an extreme point.
\end{proposition}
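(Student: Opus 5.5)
Assume $x$ is separated but not an extreme point, and derive a contradiction; this is the same strategy as the proof for $\J$. By Proposition \ref{prop.1} there is $y\neq 0$ such that $\norm{x\pm y}_{\JT}=\norm{x}_{\JT}$ and $\sum_{\alpha\in S}y(\alpha)=0$ for every segment $S$ of every $x$-norming partition. Corollary \ref{C2.10} gives $\supp(y)\subset\ran(x)$. In the $\J$ argument we took the minimum of $\supp(y)$. In the tree, the natural analogue is to choose a node $\alpha_0\in\supp(y)$ that is \emph{maximal} in $\supp(y)$ for the tree order, meaning no proper descendant of $\alpha_0$ lies in $\supp(y)$. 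Such a maximal node exists exactly because $\ran(x)$ is well-founded. Indeed, if no maximal element existed, we could pick $\alpha_1<\alpha_2<\cdots$ in $\supp(y)\subset\ran(x)$. The segments between consecutive $\alpha_i$ lie in the complete subtree $\ran(x)$, so $\ran(x)$ would contain an infinite branch, contradicting the hypothesis.

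Next we separate $\alpha_0$ from the rest of its segment on the descendant side. Suppose $\alpha_0$ has a child $\beta$ in $\ran(x)$. Separation applied to the pair $(\alpha_0,\beta)$ gives an $x$-norming partition $\mathcal{P}$ in which $\alpha_0$ and $\beta$ lie in different segments. As in the proof of Proposition \ref{prop:not_sep_not_ext}, we may add singletons of $\ran(x)\setminus\supp(x)$, so we can assume $\alpha_0$ lies in some $S\in\mathcal{P}$. Since $\alpha_0$ has two children, separating it from one child does not force $\alpha_0=\max S$, because $S$ may continue through the other child $\beta'$. To handle this, I would apply separation to the pair $(\alpha_0,\beta')$ when $S$ continues through $\beta'$. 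A single partition may not separate $\alpha_0$ from both children simultaneously. The fallback is then to use the two partitions together: $\mathcal{P}_1$ separates $\alpha_0$ from $\beta$, and $\mathcal{P}_2$ separates $\alpha_0$ from $\beta'$.

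Consider the segment $S_1\in\mathcal{P}_1$ containing $\alpha_0$. Either $\alpha_0=\max S_1$, or $S_1$ goes down through $\beta'$ into the subtree below $\beta'$. By maximality of $\alpha_0$, $y$ vanishes on every node of $S_1$ strictly below $\alpha_0$. Hence
\[ 0=\sum_{S_1}y=\sum_{\gamma\in S_1,\ \gamma\le\alpha_0}y(\gamma)=:T. \]
The same computation with $\mathcal{P}_2$ gives $\sum_{\gamma\in S_2,\ \gamma\le\alpha_0}y(\gamma)=0$. These relations only control the part of each segment above $\alpha_0$, so to isolate $y(\alpha_0)$ I need a segment whose portion above $\alpha_0$ is just $\{\alpha_0\}$. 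I would obtain this by applying separation to the pair $(\alpha_0,\text{parent}(\alpha_0))$, whenever the parent lies in $\ran(x)$. That yields a norming partition in which $\alpha_0=\min S$, so that $\sum_S y=y(\alpha_0)=0$, contradicting $\alpha_0\in\supp(y)$. If the parent of $\alpha_0$ is not in $\ran(x)$, then $\alpha_0$ is the root of $\ran(x)$. In that case we can cut every norming segment at $\min\ran(x)$ without changing the sums, by Lemma \ref{L10} and the fact that $y$ and $x$ vanish outside $\ran(x)$. Again $\alpha_0=\min S$, and the same contradiction follows.

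Looking back, the descendant-side separation is therefore unnecessary. What matters is to separate $\alpha_0$ from its parent and to use the maximality of $\alpha_0$ in $\supp(y)$, which kills all of $y$ below $\alpha_0$. The main obstacle is verifying this step carefully. It requires checking that the segment $S$ containing $\alpha_0$ satisfies $S\cap\{\gamma:\gamma\ge\alpha_0\}\cap\supp(y)=\{\alpha_0\}$, which follows because $S$ is linearly ordered and every node strictly below $\alpha_0$ lies outside $\supp(y)$. It also requires confirming that $\alpha_0$ can always be placed in some segment of the norming partition. Well-foundedness is used exactly once, to guarantee the existence of the maximal element $\alpha_0$.
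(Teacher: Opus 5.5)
Your proof is correct and is essentially the paper's argument: you pick $\alpha_0$ maximal in $\supp(y)\subset\ran(x)$ (possible by well-foundedness), use separation from the parent so that $\alpha_0=\min S$ for its segment $S$ in some $x$-norming partition, and get $\sum_{\gamma\in S}y(\gamma)=y(\alpha_0)\neq 0$. Your care about ensuring $\alpha_0$ lies in a segment, and about the case where $\alpha_0$ is minimal in $\ran(x)$, fills in points the paper passes over. In that latter case no cutting is needed, since $y$ already vanishes on the ancestors of $\alpha_0$, which all lie outside $\ran(x)$. The descendant-side discussion can simply be dropped.
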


\begin{proof}
    Assume, for the sake of contradiction, that $x$ is not an extreme point. By the characterization of non-extremality (Proposition \ref{prop.1}), there exists a non-zero vector $y \in \JT$ such that:
    \begin{enumerate}
        \item $\norm{x}_{\JT} = \norm{x+y}_{\JT} = \norm{x-y}_{\JT}$.
        \item For every $x$-norming partition $\mathcal{P} = \{S_i\}$, $\sum_{\gamma \in S_i} y(\gamma) = 0$ for all $i$.
    \end{enumerate}
    From Corollary \ref{C2.10} we have  $\supp(y) \subset \ran(x)$. Since $\ran(x)$ is well-founded, the subset $\supp(y)$ is also well-founded.
    
    Let $A = \supp(y)$. Since $A$ is non-empty and well-founded, it possesses maximal elements. Let $\alpha \in A$ be a maximal node (meaning no descendant of $\alpha$ is in $A$). Thus $y(\alpha) \neq 0$, but $y(\beta) = 0$ for all $\beta > \alpha$.
    
    Since $x$ is separated, there exists an $x$-norming partition $\mathcal{P}$ that separates $\alpha$ from its parent (if $\alpha \neq \emptyset$). Consequently, the segment $S \in \mathcal{P}$ containing $\alpha$ must start at $\alpha$. Because $\alpha$ is maximal in $\supp(y)$, the segment $S$ contains no other nodes from $\supp(y)$. Therefore:
    \[ \sum_{\gamma \in S} y(\gamma) = y(\alpha). \]
    By condition (2) of the non-extremality characterization, this sum must be 0. Thus $y(\alpha) = 0$, which contradicts $\alpha \in \supp(y)$.
\end{proof}

\begin{corollary}
    If $x \in \JT$ is separated and has finite support, then $x$ is an extreme point.
\end{corollary}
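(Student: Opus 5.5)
The corollary follows directly from the preceding Proposition on well-founded support, so the plan is simply to check that its hypothesis holds. We need to show that if $\supp(x)$ is finite, then $\ran(x)$ is a well-founded subtree of $\Tree$, i.e. it contains no infinite branch. Once this is verified, the preceding Proposition applies verbatim to the separated vector $x$ and yields that $x$ is an extreme point of the ball of radius $\norm{x}_{\JT}$.

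To verify well-foundedness, use the description of the range from the proof of Lemma \ref{L10}:
\[ \ran(x) = \{ c : \exists\, u, v \in \supp(x) \text{ with } u \le c \le v \}. \]
Every $c \in \ran(x)$ lies below some $v \in \supp(x)$, so $c$ is an initial segment of $v$. Since $\supp(x)$ is a finite set of finite sequences, the lengths $|c|$ of elements of $\ran(x)$ are bounded by $\max\{|v| : v \in \supp(x)\}$. Moreover, each $v$ has only finitely many initial segments. Hence $\ran(x)$ is finite, and a finite subset of $\Tree$ contains no infinite linearly ordered subset, in particular no infinite branch.

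There is essentially no obstacle. The only point worth noting is that the preceding Proposition invokes Corollary \ref{C2.10} to place $\supp(y)\subset \ran(x)$, and then uses maximal elements of $\supp(y)$. Both facts hold trivially once $\ran(x)$ is finite, so the argument goes through without further work.
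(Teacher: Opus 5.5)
Your proposal is correct and matches the paper's argument. The paper states the corollary without a separate proof, as an immediate consequence of the Proposition on well-founded support. Your check that finite support makes $\ran(x)$ finite, and hence free of infinite branches, is exactly the routine verification needed to apply that proposition.
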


\begin{proposition}
    Let $x \in \JT$ be a separated vector such that $\supp(x) \subset F \cup \sigma$, where $F$ is a finite set of nodes and $\sigma$ is a branch of $\Tree$. Then $x$ is an extreme point.
\end{proposition}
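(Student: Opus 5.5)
Suppose, for contradiction, that $x$ is separated but not an extreme point. By Proposition \ref{prop.1} there is a non-zero $y\in\JT$ such that $\sum_{\gamma\in S}y(\gamma)=0$ for every segment $S$ of every $x$-norming partition. By Corollary \ref{C2.10}, $\supp(y)\subset\ran(x)$. The plan is to show that this forces $y=0$, combining the "maximal element" argument of the well-founded case off the branch with the "minimal element" argument used for $\J$ on the branch.

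\textbf{Step 1 (structure of the range).} I first show that $\ran(x)\subset \sigma\cup G$ for a finite set $G$. Using the description of $\ran(x)$ from Lemma \ref{L10}, every $c\in\ran(x)$ satisfies $u\le c\le v$ for some $u,v\in\supp(x)\subset F\cup\sigma$. If $v\in\sigma$, then $c\in\sigma$, because $\sigma$ is a branch and hence closed under initial segments. If $v\in F$, then $c$ is one of the finitely many predecessors of elements of $F$. So $\ran(x)\setminus\sigma$ is finite. Moreover, if $\alpha\notin\sigma$, then every $\beta\ge\alpha$ also lies outside $\sigma$.

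\textbf{Step 2 (removing the part of $y$ off $\sigma$).} Suppose $\supp(y)\setminus\sigma\neq\emptyset$. This set is finite by Step 1, so I choose $\alpha$ in it that is maximal in the tree order. By the last observation of Step 1, any $\beta>\alpha$ lies off $\sigma$, so $\alpha$ is in fact maximal in all of $\supp(y)$.

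Since $x$ is separated and $\alpha\in\ran(x)$, there is an $x$-norming partition separating $\alpha$ from its parent, which lies in $\ran(x)$ or is handled as in the Well-Founded Support proposition. The segment $S$ containing $\alpha$ then starts at $\alpha$, so $\sum_{S}y=y(\alpha)\ne0$, a contradiction. Hence $\supp(y)\subset\sigma$.

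\textbf{Step 3 (the branch part).} Now $\supp(y)\subset\sigma$, and $\sigma$ is order-isomorphic to $\omega$. Let $k_0=\min\supp(y)$ and let $k_0^+$ be the successor of $k_0$ on $\sigma$. If $k_0^+\in\ran(x)$, separation gives an $x$-norming partition $\mathcal P$ in which $k_0$ and $k_0^+$ lie in different segments. Let $S\in\mathcal P$ contain $k_0$.
\begin{itemize}
\item Nodes of $S$ below $k_0$ lie on $\sigma$ below $\min\supp(y)$, so $y$ vanishes there.
\item Nodes of $S$ above $k_0$ cannot pass through $k_0^+$, so they lie off $\sigma$, where $y=0$ by Step 2.
\end{itemize}
Thus $\sum_S y=y(k_0)\neq0$, a contradiction.

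If instead $k_0^+\notin\ran(x)$, then $\supp(y)\cap\sigma\subset\ran(x)\cap\sigma$ has no element above $k_0$, since such an element would force $k_0^+\in\ran(x)$ by convexity. So $\supp(y)=\{k_0\}$. Any $x$-norming partition containing $k_0$ in a segment then gives $\sum_S y=y(k_0)\neq 0$. Adding the singleton $\{k_0\}$ if necessary, as in Corollary \ref{C2.10}, such a partition exists. In both cases $y=0$, so $x$ is an extreme point.

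The main obstacle I expect is the bookkeeping in Step 2. One must make sure that maximality within the finite set $\supp(y)\setminus\sigma$ really gives maximality in $\supp(y)$; this relies on $\sigma$ being downward closed. One must also handle the case where the parent of $\alpha$ lies outside $\ran(x)$, so that separation is not directly available. In that case $\alpha$ is a minimal element of $\ran(x)$, and any $x$-norming partition can be modified, via Lemma \ref{L10}, so that the segment through $\alpha$ starts at $\alpha$.
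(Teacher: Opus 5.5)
Your proof is correct and follows the paper's own argument: first kill $y$ on the finite off-branch part $\ran(x)\setminus\sigma$ by choosing a maximal element of $\supp(y)$ there and separating it from its parent, then take $k_0=\min\supp(y)$ on $\sigma$ and separate it from its successor on $\sigma$. You are in fact a bit more careful than the paper on the edge cases. When the parent of $\alpha$ lies outside $\ran(x)$, the needed modification is simply to drop the part of the segment below $\alpha$, where $x$ vanishes; this is elementary and is not really an instance of Lemma \ref{L10}. When $k_0^+\notin\ran(x)$, you correctly deduce $\supp(y)=\{k_0\}$ instead of claiming that $k_0$ is maximal in the range.
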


\begin{proof}
    Assume, for the sake of contradiction, that $x$ is not an extreme point. Then there exists a non-zero vector $y \in \JT$ satisfying the conditions of Proposition \ref{prop.1} (specifically, the sum of $y$ vanishes on every segment of every $x$-norming partition). We have (Corollary \ref{C2.10})  $\supp(y) \subset \ran(x)$.
    
    The set $K = \ran(x) \setminus \sigma$ is finite. We first show that $y$ must vanish on $K$ by backward induction. 
    Let $C = \{ a \in K : \text{for all } b > a, y(b) = 0 \text{ or } b \in \sigma \}$. Since $K$ is finite, every element in $K$ eventually leads to $\sigma$ or terminates.
    Let $a$ be a maximal element of $\supp(y) \cap K$. Any descendant of $a$ in $\supp(y)$ must belong to $\sigma$ (since $a$ is maximal in the non-branch part). However, since $\sigma$ is a single branch, if $a \notin \sigma$, $a$ cannot be an ancestor of any node in $\sigma$. Thus, $a$ has no descendants in $\supp(y)$.
    Using the separation property, we can separate $a$ from its parent. The resulting segment $S$ starts at $a$ and contains no other elements of $\supp(y)$. Thus $\sum_S y = y(a) = 0$.
    Repeating this argument (backward induction), we conclude that $y(a) = 0$ for all $a \in K$.
    
    Thus, $\supp(y) \subset \sigma$. Since $y \neq 0$, let $u = \min \supp(y)$. Then $u \in \sigma$.
    Let $v$ be the child of $u$ that lies on $\sigma$. If $v \notin \ran(x)$, then $u$ is maximal in the range, and by isolating it, we find $y(u)=0$. Thus, assume $v \in \ran(x)$.
    Since $x$ is separated, there exists an $x$-norming partition $\mathcal{P}$ separating $u$ and $v$. Let $S \in \mathcal{P}$ be the segment containing $u$. Since $S$ does not contain $v$, and $\supp(y) \subset \sigma \cup K$ (with $y|_K=0$), the segment $S$ contains no other elements of $\supp(y)$ except possibly $u$.
    Specifically, if $S$ continues to the other child $w$ of $u$, then $w \notin \sigma$, so $w \in K$ (or is outside the support). Since $y$ vanishes on $K$, the sum on $S$ is simply $y(u)$.
    
    By Proposition \ref{prop.1}, $\sum_S y = 0$, which implies $y(u) = 0$. This contradicts $u \in \supp(y)$.
\end{proof}

\begin{remark}
    We are not able to extend the above result for finitely many branches instead of one. In particular, we are not able to prove that $x \in \JT$ separated is an extreme point if $\supp(x) \subset \sigma_1 \cup \sigma_2$ with $\sigma_1, \sigma_2$ branches of $\Tree$.
\end{remark}

\subsection{Local Isolation Implies $\ell_2$ Norm}

We prove that if every node in the support can be isolated in some optimal partition, the global norm collapses to the $\ell_2$ norm. In the sequel for $\alpha \in \Tree $ we denote by $W_{\alpha}$ the wedge of $\Tree$ rooted at $\alpha$ defined by \[ W_\alpha = \{ \beta \in \Tree: \alpha \le \beta \}. \]
For $ x\in \JT$  and $\alpha \in \Tree$ we  denote by $ x|_{W_\alpha}$ the restriction of $x$ onto 
$W_\alpha $.

\begin{proposition}
    Let $x \in \JT$ be a vector such that for every $a \in \supp(x)$, the singleton $\{a\}$ belongs to some $x$-norming partition. Then:
    \[ \norm{x}_{\JT} = \norm{x}_2. \]
\end{proposition}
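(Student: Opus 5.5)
The inequality $\norm{x}_{\JT} \ge \norm{x}_2$ is automatic: the partition of $\supp(x)$ into singletons gives exactly $\norm{x}_2$. So the plan is to produce an $x$-norming partition in which every node of $\supp(x)$ is a singleton. Segments of such a partition that avoid $\supp(x)$ contribute nothing, so its value is $\norm{x}_2$. I would build this partition by cutting a fixed $x$-norming partition one support node at a time and then passing to a limit.

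\emph{Step 1 (key lemma).} Let $\mathcal{P}$ be $x$-norming and $a \in \supp(x)$. Let $S \in \mathcal{P}$ be the segment containing $a$, and write $S = S_1 \cup \{a\} \cup S_2$ with $S_1 < a < S_2$. Let $\mathcal{P}^a$ be the partition obtained from $\mathcal{P}$ by replacing $S$ with $S_1$, $\{a\}$, $S_2$ (dropping empty pieces). I would show that $\mathcal{P}^a$ is again $x$-norming. Writing $s_1, s_2$ for the sums of $x$ on $S_1, S_2$, this amounts to the cross-term inequality
\[ x(a)s_1 + x(a)s_2 + s_1 s_2 \le 0, \]
since the reverse inequality follows from the optimality of $\mathcal{P}$.

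To prove it, I would use the $x$-norming partition $\mathcal{Q}$ with $\{a\} \in \mathcal{Q}$ and form two hybrid partitions by cutting along the wedge $W_a$ and along the node $a$.
\begin{itemize}
\item $\mathcal{R}_1$ uses $\mathcal{P}$ outside $W_a$ and $\mathcal{Q}$ inside $W_a$.
\item $\mathcal{R}_2$ uses $\mathcal{Q}$ outside $W_a$ and $\mathcal{P}$ inside $W_a$.
\end{itemize}
Only segments crossing the boundary of $W_a$ need care. In $\mathcal{Q}$ nothing crosses into $W_a$ through $a$, because $\{a\}$ is a singleton. In $\mathcal{P}$ the only crossing segment is $S$, which we splice appropriately. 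Adding the two values, everything cancels except $\norm{x}_{\mathcal{P}}^2 + \norm{x}_{\mathcal{Q}}^2$ plus cross terms involving $s_1, x(a), s_2$. Since both $\norm{x}_{\mathcal{R}_i} \le \norm{x}_{\JT}$, this should force the displayed cross term to be non-positive.

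\emph{Main obstacle.} I expect this bookkeeping to be the hard step. In $\mathcal{Q}$ the part of the tree above $a$ may be covered by a segment passing through the parent of $a$ into the other child. The first thing I would try is to exploit Lemma \ref{lem1}(2) on the complete subtrees $W_a$ and $\Tree \setminus W_a$, so as to compare the restricted norms of $x|_{W_a}$ and of its complement under $\mathcal{P}$ versus $\mathcal{Q}$.

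\emph{Step 2 (iteration and limit).} Enumerate $\supp(x) = \{a_1, a_2, \dots\}$, which is countable. Set $\mathcal{P}_0$ to be any $x$-norming partition (Proposition \ref{P1}(i)) and $\mathcal{P}_n = (\mathcal{P}_{n-1})^{a_n}$; each $\mathcal{P}_n$ is $x$-norming by Step 1. Once a node is cut into a singleton, later cuts do not merge it again, so in $\mathcal{P}_n$ each of $a_1, \dots, a_n$ is a singleton. By Proposition \ref{P1}(ii) and Remark \ref{Re1}, some subsequence converges pointwise to an $x$-norming partition $\mathcal{P}$. In $\mathcal{P}$ every $a_i$ is a singleton, because the encoding $e$ vanishes at $a_i$ and at its children on the segment for all large $n$. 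Hence
\[ \norm{x}_{\JT}^2 = \norm{x}_{\mathcal{P}}^2 = \sum_i x(a_i)^2 = \norm{x}_2^2. \]
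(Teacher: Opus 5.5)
There is a genuine gap, at exactly the point you flagged. Your Step 1 lemma is false, even for vectors satisfying the hypothesis.

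Take $x = 2e_{\emptyset} - e_{0} + 2e_{00}$, supported on the chain $\emptyset < 0 < 00$. Every partition acts on $x$ through disjoint intervals of this chain, and one checks that $\norm{x}_{\JT}^2 = 9 = \norm{x}_2^2$. This value is attained both by the singletons (so the hypothesis holds) and by $\mathcal{P} = \{\{\emptyset, 0, 00\}\}$. Cutting $\mathcal{P}$ at $a = \emptyset$ gives $\{\{\emptyset\}, \{0, 00\}\}$, of value $4 + 1 = 5 < 9$. Here $S_1$ is empty, $x(a) = 2$ and $s_2 = 1$, so $x(a)s_1 + x(a)s_2 + s_1 s_2 = 2 > 0$. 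Cutting at $00$ fails the same way; only the cut at $0$ is harmless.

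Consistently with this, your two hybrids can only give lower bounds. Adding their values gives $2\norm{x}_{\JT}^2 - 2s_1 s_2$ if you splice $S_1 \cup \{a\}$ in $\mathcal{R}_1$, and $2\norm{x}_{\JT}^2 - 2s_1(x(a)+s_2)$ if you do not. So you learn $s_1 s_2 \ge 0$ or $s_1(x(a)+s_2) \ge 0$, never the required upper bound on the cross term. Step 2 with an arbitrary enumeration then fails at once: with $a_1 = \emptyset$ and $\mathcal{P}_0$ as above, $\mathcal{P}_1$ is not norming.

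The missing idea is to \emph{replace} $\mathcal{P}$ inside the wedge rather than cut a segment, and to proceed top-down. If $\mathcal{Q}$ is $x$-norming with $\{a\} \in \mathcal{Q}$, then no segment of $\mathcal{Q}$ meets both $W_a$ and $\Tree \setminus W_a$, since such a segment would contain $a$. By the argument of Lemma \ref{lem1}(1), this gives $\norm{x}_{\JT}^2 = \norm{x|_{\Tree\setminus W_a}}_{\JT}^2 + \norm{x|_{W_a}}_{\JT}^2$, with $\mathcal{Q}$ restricted to $W_a$ norming $x|_{W_a}$. This splitting identity is what the paper iterates over the depth in $\supp(x)$.

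To use it in your framework, enumerate $\supp(x)$ by nondecreasing level.
\begin{enumerate}
\item At stage $n$, all proper ancestors of $a_n$ in $\supp(x)$ are already singletons of $\mathcal{P}_{n-1}$. So the part $S_1$ of the segment through $a_n$ meets no support and has sum $0$.
\item Hence $\norm{x}_{\mathcal{P}_{n-1}}^2$ splits exactly into its contributions inside and outside $W_{a_n}$. Comparing with the splitting identity forces the outside part of $\mathcal{P}_{n-1}$ to norm $x|_{\Tree\setminus W_{a_n}}$.
\item Therefore the outside part of $\mathcal{P}_{n-1}$, together with $\mathcal{Q}$ restricted to $W_{a_n}$, is an $x$-norming partition $\mathcal{P}_n$ containing $\{a_n\}$.
\item The earlier nodes $a_1, \dots, a_{n-1}$ lie outside $W_{a_n}$, so they remain singletons.
\end{enumerate}
With this repair, your compactness passage to the limit via Proposition \ref{P1}(ii) goes through. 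It replaces the tail estimate with which the paper concludes.
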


\begin{proof}
    Let $\mathcal{B} = \supp(x)$ be the subtree defined by the $\supp(x)$. Since $x \in \JT$, $\mathcal{B}$ is a countable set of nodes. We define the relative depth of a node $a \in \mathcal{B}$, denoted $|a|_{\mathcal{B}}$, as follows:
    \begin{itemize}
        \item If $a$ is minimal in $\mathcal{B}$ (i.e., no ancestor of $a$ is in $\mathcal{B}$), then $|a|_{\mathcal{B}}= 1$.
        \item Generally, $|a|_{\mathcal{B}} = k$ if the longest chain of ancestors of $a$ within $\mathcal{B}$ has length $k$.
        \item In the rest of the proof for $a\in \mathcal{B}$ by  the children of $a$ we mean the children with respect to the subtree $\mathcal{B}$.
    \end{itemize}
    
    We prove by induction that for every $n \in \mathbb{N}$:
    \begin{equation}\label{eq:inductive_step}
        \norm{x}_{\JT}^2 = \sum_{\{a \in \mathcal{B} : |a|_{\mathcal{B}} \le n\}} x(a)^2 + \sum_{\{ c \in \mathcal{B}: |c|_{\mathcal{B}} = n+1\}} \norm{x|_{W_c}}_{\JT}^2.
    \end{equation}
    (Note: The second sum runs over nodes $c$ that are ``roots'' of the remainder of the support at level $n+1$).

    \textbf{Base Case ($n=1$):}
    Let $M = \{ a \in \mathcal{B} : a \text{ is minimal in } \mathcal{B}\}$. These are the nodes with $|a|_{\mathcal{B}} = 1$.
    Since the subtrees rooted at distinct $a \in M$ are disjoint, we have:
    \[ \norm{x}_{\JT}^2 = \sum_{a \in M} \norm{x|_{W_a}}_{\JT}^2. \]
    Fix $a \in M$. By hypothesis, there exists an $x$-norming partition $\mathcal{P}_a$ such that $\{a\} \in \mathcal{P}_a$.
    Since $\{a\}$ is a segment in $\mathcal{P}_a$, the partition splits the subtree $W_a$ into the singleton $\{a\}$ and partitions of the subtrees rooted at the children of $a$ with respect to the subtree $ \mathcal{B} $.
    Thus, the norm squared decomposes:
    \[ \norm{x|_{W_a}}_{\JT}^2 = x(a)^2 + \sum_{c \in \operatorname{children}(a) } \norm{x|_{W_c}}_{\JT}^2. \]
    Summing over all $a \in M$:
    \[ \norm{x}_{\JT}^2 = \sum_{a \in M} \left( x(a)^2 + \sum_{c \in \operatorname{children}(a)} \norm{x|_{W_c}}_{\JT}^2 \right) = \sum_{\{a : |a|_{\mathcal{B}}=1\}} x(a)^2 + \sum_{\{c : |c|_{\mathcal{B}}=2\}} \norm{x|_{W_c}}_{\JT}^2. \]
    This establishes the formula for $n=1$.

    \textbf{Inductive Step:}
    Assume the formula holds for $n = k$.
    \[ \norm{x}_{\JT}^2 = \sum_{\{a : |a|_{\mathcal{B}} \le k\}} x(a)^2 + \sum_{\{c : |c|_{\mathcal{B}} = k+1\}} \norm{x|_{W_c}}_{\JT}^2. \]
    Consider a specific node $c$ with $|c|_{\mathcal{B}} = k+1$. By the global hypothesis, there exists an $x$-norming partition $\mathcal{P}_c$ (for the whole vector) containing $\{c\}$. The restriction of this partition to $W_c$ is necessarily an $x|_{W_c}$-norming partition.
    Therefore, just as in the base case, the existence of a norming partition isolating $c$ implies:
    \[ \norm{x|_{W_c}}_{\JT}^2 = x(c)^2 + \sum_{d \in \operatorname{children}(c)} \norm{x|_{W_d}}_{\JT}^2. \]
    Substituting this equality for every term in the second sum of the inductive hypothesis:
    \begin{align*}
        \norm{x}_{\JT}^2 &= \sum_{\{a : |a|_{\mathcal{B}} \le k\}} x(a)^2 + \sum_{\{c : |c|_{\mathcal{B}} = k+1\}} \left( x(c)^2 + \sum_{d \in \operatorname{children}(c)} \norm{x|_{W_d}}_{\JT}^2 \right) \\
        &= \left( \sum_{\{a : |a|_{\mathcal{B}} \le k\}} x(a)^2 + \sum_{\{c : |c|_{\mathcal{B}} = k+1\}} x(c)^2 \right) + \sum_{\{d : |d|_{\mathcal{B}} = k+2\}} \norm{x|_{W_d}}_{\JT}^2 \\
        &= \sum_{\{a : |a|_{\mathcal{B}} \le k+1\}} x(a)^2 + \sum_{\{d : |d|_{\mathcal{B}} = k+2\}} \norm{x|_{W_d}}_{\JT}^2.
    \end{align*}
    This completes the inductive step.

    \textbf{Conclusion:}
    Since $\norm{x}_{\JT} \le \norm{x}_2$ is generally false but $\norm{x}_{\JT} \ge \norm{x}_2$ is always true (using the partition of singletons), we check the reverse inequality. 
    The above equality derived in the limit is:
    \[ \lim_{N \to \infty} \left( \norm{x}_{\JT}^2 - \sum_{\{a : |a|_{\mathcal{B}} \le N\}} x(a)^2 \right) = \lim_{N \to \infty} \sum_{\{c : |c|_{\mathcal{B}} = N+1\}} \norm{x|_{W_c}}_{\JT}^2. \]
  It is well known that for $F\subset \Tree$ consisting of pairwise incomparable nodes we have 
  \[ \norm{x|_{ \bigcup_{ c\in F} W_c}}_{\JT} ^2 = \sum_{c\in F} \norm{ x|_{W_c}}_{\JT}^2  \quad \text{and} \quad  \sum_{c\in F_n} \norm{ x|_{W_c}}_{\JT}^2 \to 0               \]
  when $ \min \{ |c|_\Tree : c \in F_n\} \to \infty $.
    Thus, \[\norm{x}_{\JT}^2 = \sum_{a \in \mathcal{B}} x(a)^2 = \norm{x}_2^2. \]
\end{proof}

\section{Positive Extreme Points}

In this section, we focus on positive vectors ($x \ge 0$). The absence of sign cancellations allows for a constructive approach to determining the norm using a greedy algorithm. We begin by establishing the necessary notation.

\subsection{Notation and Basic Properties}

\begin{definition}[Notation]\label{No1}
    Let $x \in \JT$ and $a \in \Tree$.
    \begin{enumerate}
        \item Let $W_a = \{ b \in \Tree : a \le b \}$ denote the wedge rooted at $a$.
        \item Let $x_a$ denote the restriction of $x$ to $W_a$, i.e., $x_a = x \cdot \chi_{W_a}$.
        \item For a segment $s \subset \Tree$ we define the sum of $x$ along $s$ as:
        \[ S_s(x) = \sum_{b \in s} x(b). \]
        \item We define the maximal segment sum starting at $a$ as:
        \[ S_a(x) = \sup \{ S_s(x) : s \text{ is a segment with } \min s = a \}. \]
        \item For $x\in \JT$ we set $\mathcal{B} = \supp(x)$ the subtree defined by $x$. Let $a,c \in \mathcal{B}$ such that $c$ is a child of $a$ with respect to the subtree $\mathcal{B}$. Then we denote by $s_{a, c}$ the segment defined as 
        $\min s_{a, c} = a$ and $\max s_{a, c} = d$ where $d$ is the parent of $c$ in $\Tree$.
    \end{enumerate}
\end{definition}

\begin{remark}
  (1)   If $x$ is a positive vector in $\JT$ with finite support, then for every $a \in \supp(x)$, the supremum $S_a(x)$ is attained by some segment $s$.  For $x$ with infinite support, we observe that  if $(b_n)_n $  is a sequence of maximal segments in $W_a$
  with $  b_n  \xrightarrow{\text{pw}} b$ then $S_{b_n} (x) \to S_b(x) $.  Since  $W_a$  is compact the supremum is attained.
  
  (2)  If $x\in \JT$ is a positive vector and $a, c \in \supp(x) $ with $c$ being a child of $a$ with respect to the subtree $\supp(x)$, 
  then $S_{s_{a,c}}(x) = x(a)$ and if $s$ is a segment with $\min s= a$ and $c\in s$ then $s = s_{a,c} \cup s' $ with  $s'$ a segment and $\min s' = c$.  
\end{remark}
  
  \begin{definition}
    Let $x$ be a positive vector in $\JT$. A \textbf{canonical $x$-norming partition} $\mathcal{P}$ is an $x$-norming partition $\mathcal{P} = \{s_a: a\in F\}$ satisfying the following properties:
  \begin{enumerate}
  \item Each $s_a$ is a final segment of the tree $\Tree$.
  \item $a = \min s_a \in \supp(x)$.
  \end{enumerate}
  \end{definition}

  \begin{lemma}
    Let $x\in \JT$ be a positive vector and $\mathcal{P}$ an $x$-norming partition. Then there exists a 
  $\mathcal{P}_1 = \{ s_a: a \in F\}$ canonical $x$-norming partition such that for every $s \in \mathcal{P}$ with
    $s \cap \supp(x) \ne \emptyset$ there exists an $s_a \in \mathcal{P}_1$ such that 
         \[ s(x) = \sum_{c\in s} x(c) = \sum_{c\in s_a} x(c) = s_a(x). \]
  \end{lemma}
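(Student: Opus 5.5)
The plan is to modify $\mathcal{P}$ one segment at a time, in two operations: trimming the bottom of a segment to a point of the support, and extending it downward to a final segment. Neither operation should decrease any segment sum, since $x\ge 0$.

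\textbf{Step 1 (reduction).} Discard from $\mathcal{P}$ every segment $s$ with $s\cap\supp(x)=\emptyset$; these contribute $0$, so the result is still $x$-norming. For each remaining $s$, let $a_s=\min(s\cap\supp(x))$, and replace $s$ by $s'=\{c\in s: c\ge a_s\}$. This is a segment with $\min s'=a_s\in\supp(x)$. Since $x$ vanishes on $s\setminus s'$, we have $s'(x)=s(x)$. The new segments are still pairwise disjoint, so we obtain an $x$-norming partition $\mathcal{P}'$. Each of its segments starts at a point of $\supp(x)$, and the map $s\mapsto s'$ preserves sums.

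\textbf{Step 2 (extension to final segments).} It remains to extend each $s'$ to a final segment $s_a$, meaning a segment of the form $[a,\infty)$ along a branch, keeping the family disjoint and the sums unchanged. Let $m=\max s'$ if it exists; otherwise $s'$ is already final. I would extend $s'$ below $m$ by a greedy descent. Among the children of the current endpoint, choose one not lying in any other segment of $\mathcal{P}'$ and not in $\ran$-relevant positions, if possible, and continue.

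The sums cannot decrease, because $x\ge 0$. They cannot strictly increase either: a strict increase would give a partition with $\norm{x}_{\mathcal{P}}>\norm{x}_{\JT}$, a contradiction, because all other terms are unchanged and the family stays disjoint. So disjointness is the only issue, and positivity plus maximality automatically give equal sums.

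\textbf{Main obstacle.} The key difficulty is guaranteeing that a disjoint extension exists, since below $m$ both children may be blocked by other segments of $\mathcal{P}'$. I would handle it as follows.

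\emph{Blocked case.} Suppose a child $c$ of the current endpoint starts a segment $t\in\mathcal{P}'$; by Step 1, $\min t=c\in\supp(x)$. Then merging, i.e.\ replacing $s'$ and $t$ by $s'\cup t$, gives $(s'(x)+t(x))^2>s'(x)^2+t(x)^2$ when both sums are positive. This contradicts norming.

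\emph{Segments passing through $c$.} A segment of $\mathcal{P}'$ whose minimum lies strictly above $c$ cannot contain $c$ unless it contains the current endpoint, which is impossible by disjointness. So the only way $c$ is occupied is by a segment starting at $c$, which is ruled out by the merging argument above.

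\emph{Conclusion of the descent.} Hence at every step some child is free, and in fact any child is free. Iterating, and using a König/compactness choice to pick an infinite branch, produces a final segment $s_a\supseteq s'$ with $s_a(x)=s'(x)$. The segments are disjoint, because each extension lies in the region below $\max s'$ not met by other segments. Different $s'$ might compete for the same free nodes below; but below a free node everything is disjoint from the rest of $\mathcal{P}'$ by the same merging argument, and the wedges below distinct endpoints are incomparable, so no conflict arises.

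Setting $\mathcal{P}_1=\{s_a\}$ completes the construction.
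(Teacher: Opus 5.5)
Your proof is correct and follows the paper's route: trim each segment so it starts at its first support node, then extend it below its last node to an arbitrary final segment, using positivity and optimality to see that neither support nor any other segment lies strictly below that endpoint. The paper only asserts that the last support node of a finite segment is maximal in $\supp(x)$ and calls the rest ``easy to check''; your merging inequality $(s'(x)+t(x))^2>s'(x)^2+t(x)^2$ supplies exactly that missing justification, and since every child turns out to be free, no K\"onig-type choice is needed.
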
      
    \begin{proof}
      We first observe that for every $s\in \mathcal{P}$ such that $s \cap \supp(x) \ne \emptyset$, if $s$ is finite then $\max s$ is a maximal node of the subtree $\mathcal{B}= \supp(x)$.
    
    We define the canonical $x$-norming partition $\mathcal{P}_1$ as follows.
    \begin{enumerate}
    \item If $s\in \mathcal{P} \: \text{with} \: s\cap \supp(x) \ne \emptyset $ is finite we set
     \[ a =\min s\cap \supp(x) \quad \text{and} \quad  \beta = \max s\cap \supp(x). \] 
     In this case we define $s_a$ as any final segment of $\Tree$ such that $\min s_a = a $ and $\beta \in s_a$.
    \item If $s\in \mathcal{P} \: \text{with} \: s\cap \supp(x) \ne \emptyset $ is infinite then clearly it is a final segment of $\Tree$ and setting $a = \min s\cap \supp(x)$ we define $s_a = s\cap W_a$.
    \end{enumerate}
     It is easy to check that $\mathcal{P}_1 = \{ s_a : a \in F\}$ is a canonical $x$-norming partition satisfying the desired property.
    \end{proof}

  \begin{remark}
    As follows from the preceding result, considering only canonical $x$-norming partitions we do not restrict the variety of $x$-norming partitions concerning their action on the positive vector $x$. In the following two subsections of this section we will consider exclusively canonical $x$-norming partitions.
  \end{remark}

\subsection{The Greedy Algorithm and Canonical Norming Partitions}

In the context of the Greedy Algorithm, when we refer to the "children" of a node $a \in \supp(x)$, we refer to the children within the subtree defined by the support of $x$.

The following lemma provides the recursive step for calculating the norm of a positive vector. It essentially states that if we know how to optimally partition the subtrees, the optimal strategy at the root $a$ is to merge with the child offering the largest segment sum.

\begin{lemma}[Recursive Norm Formula]\label{lemma:greedy_step}
    Let $x \in \JT$ be a positive vector and $a \in \supp(x)$ not a maximal node with respect to $\supp(x)$. Let $C(a)$ denote the set of children of $a$ with respect to the subtree $\mathcal{B} = \supp(x) $. Assume that for every child $c \in C(a)$ and for every final segment $s_c \subset W_c $ such that
     $S_{s_c}(x) = S_c(x)$, there exists a canonical $x_c$-norming partition $\mathcal{P}_c$ that includes the segment $s_c$.
    
    Then the norm of the restriction $x_a$ satisfies:
    \begin{enumerate}
    \item
    \begin{equation}
        \norm{x_a}_{\JT}^2 = x(a)^2 + 2x(a)S_{c_0}(x) + \sum_{c \in C(a)} \norm{x_c}_{\JT}^2
    \end{equation}
    where $c_0 \in C(a)$ is a child that maximizes the segment sum, i.e., $S_{c_0}(x) = \max_{c \in C(a)} S_c(x)$.
    \item For every $s$ final segment such that $a = \min s$ and $S_a = S_s$, there exists a canonical $x_a$-norming partition
    $\mathcal{P}$ such that $s \in \mathcal{P}$.
    \end{enumerate}
\end{lemma}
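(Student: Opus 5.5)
Both parts will follow from one inequality: an upper bound on $\norm{x_a}_{\mathcal{Q}}^2$ for an arbitrary canonical partition $\mathcal{Q}$ of $W_a$, matched by a concrete partition that attains it. By the preceding Lemma it suffices to consider canonical $x_a$-norming partitions, so we may assume $\mathcal{Q}$ is canonical.

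\textbf{Upper bound.} In $\mathcal{Q}$ the node $a$ lies in a final segment $t$ with $\min t = a$. If $t$ meets $\supp(x)\setminus\{a\}$, then by part (2) of the Remark following Notation \ref{No1}, $t = s_{a,c}\cup t'$ for a unique child $c\in C(a)$, where $t'$ is a final segment with $\min t' = c$. If $t$ does not meet the rest of the support, $a$ is effectively a singleton.

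Consider the first case. The remaining segments of $\mathcal{Q}$, together with $t'$, induce partitions of the wedges $W_{c'}$, $c'\in C(a)$, because the wedges of distinct children are pairwise incomparable.

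For $c'\neq c$ we bound the contribution by $\norm{x_{c'}}_{\JT}^2$. In $W_c$ the contribution is at most $\norm{x_c}_{\JT}^2$ once $t'$ is counted as a segment. Hence
\[
\norm{x_a}_{\mathcal{Q}}^2 \le (x(a)+S_{t'})^2 - S_{t'}^2 + \sum_{c'} \norm{x_{c'}}_{\JT}^2 = x(a)^2 + 2x(a)S_{t'} + \sum_{c'}\norm{x_{c'}}_{\JT}^2,
\]
and $S_{t'}\le S_c(x)\le S_{c_0}(x)$. Since $x\ge 0$, the singleton case gives the same bound with $S_{t'}$ replaced by $0$. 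This proves $\le$ in (1).

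\textbf{Lower bound and part (2).} Take $s$ final with $\min s=a$ and $S_s=S_a$. If $s$ meets no other support point, then $S_a = x(a)$; but $a$ is not maximal and $x\ge 0$, so appending a child-segment gives $S_a \ge x(a)+S_{c}$. Hence $S_c=0$, which forces $x_c=0$, a contradiction; so this case cannot occur. Therefore $s=s_{a,c}\cup s'$ with $s'$ final, $\min s'=c$, and $x(a)+S_{s'}=S_a\ge x(a)+S_{c_0}$. Maximality then forces $S_{s'}=S_c=S_{c_0}$.

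By hypothesis there is a canonical $x_c$-norming partition $\mathcal{P}_c\ni s'$. For every other child $c'$ choose any canonical $x_{c'}$-norming partition $\mathcal{P}_{c'}$; these exist by Proposition \ref{P1} together with the preceding Lemma. Set
\[
\mathcal{P} = \bigl(\mathcal{P}_c\setminus\{s'\}\bigr)\cup\{s\}\cup\bigcup_{c'\neq c}\mathcal{P}_{c'}.
\]
This is a family of pairwise disjoint final segments with minima in the support, so it is canonical. It yields exactly $x(a)^2+2x(a)S_{c_0}+\sum_{c'}\norm{x_{c'}}_{\JT}^2$, which matches the upper bound. This proves (1), and $\mathcal{P}$ is norming and contains $s$, which proves (2).

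\textbf{Main obstacle.} The delicate step is the decoupling in the upper bound. The segments of $\mathcal{Q}$ other than $t$ must each lie inside a single $W_{c'}$, or else meet no support in $W_a$. This holds because any segment in $W_a$ not containing $a$ that meets the support lies below some child of $a$ in $\mathcal{B}$. One must also check that segments passing through $s_{a,c'}$ carry zero mass. When $C(a)$ is infinite, I would pass from finite partial sums to the full sum by monotone convergence.
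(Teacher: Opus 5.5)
Your proof is correct and takes essentially the same route as the paper. You decompose according to the segment through $a$, bound each child wedge by $\norm{x_c}_{\JT}^2$ and $S_{t'}$ by $S_{c_0}$, and then attain the bound by gluing $s$ onto the hypothesised $x_c$-norming partition containing $s'$, together with arbitrary norming partitions of the other wedges. You are in fact more careful than the paper: you state the upper bound as a genuine inequality, check disjointness, and show that the child through which $s$ passes must be a maximiser. The one implicit step is the lower bound in (1), which needs some $s$ attaining $S_a$; this is supplied by the attainment Remark following Notation~\ref{No1}.
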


\begin{proof}
  (1) Let $\mathcal{P}$ be a canonical $x_a$-norming partition of $\supp(x_a)$. The node $a$ must belong to some segment $s \in \mathcal{P}$. Since $a$ is the root of $W_a$, $s$ must start at $a$.
    
    If $s = \{a\}$, then the partition splits $a$ from all its descendants. The norm would be:
    \[  \norm{x_a}_{\JT}^2 = x(a)^2 + \sum_{c \in C(a)} \norm{x_c}_{\JT}^2. \]
    However, since $x \ge 0$, merging $a$ with a segment from a child is always superior or equal to isolating $a$.
    
    Suppose $s$ continues into the wedge $W_{c'}$ for some child $c'$. Then $s = s_{a, c'} \cup s'$, where $s_{a, c'}$ is as in Definition \ref{No1} and $s'$ is a final segment starting at $c'$. The contribution of this segment is:
    \[ \left( x(a) + S_{s'}(x) \right)^2 = x(a)^2 + 2x(a)S_{s'}(x) + S_{s'}(x)^2. \]
    Then 
    \[ \norm{x_a}_{\JT}^2 = x(a)^2 + 2x(a)S_{s'}(x) + \norm{x_{c'}}_{\mathcal{P}_{s'}}^2 + \sum_{c \neq c'} \norm{x_c}_{\JT}^2 \]
    where $\mathcal{P}_{s'} = \{ s \in \mathcal{P}: s \subset W_{c'} \} \cup \{ s' \}$.
    
    To maximize the total norm $\norm{x_a}^2$, it is enough, if it is possible, to choose $s'$ such that 
    \begin{enumerate}
    \item $S_{s'} = S_{c'}$
    \item $\norm{x_{c'}}_{\mathcal{P}_{s'} } = \norm{x_{c'}}_{\JT}$
    \end{enumerate}
    This is indeed possible because of our inductive assumptions. Hence 
      \begin{equation}
        \norm{x_a}_{\JT}^2 = x(a)^2 + 2x(a)S_{c_0}(x) + \sum_{c \in C(a)} \norm{x_c}_{\JT}^2
    \end{equation}

    (2) We write $s = s_{a, c_0} \cup s'$ and we set $c_0 = \min s'$. Clearly $c_0 \in C(a)$ and $S_{s'}(x_{c_0}) = 
    S_{c_0}(x_{c_0})$. By our assumptions there exists a canonical $x_{c_0}$-norming partition $\mathcal{P}_{c_0}$ with
    $s'\in \mathcal{P}_{c_0}$. Further for every $c\in C(a)$, $c\ne c_0$, we choose a canonical $x_c$-norming partition $\mathcal{P}_c$. Then the partition 
    \[ \mathcal{P} = \{ s\} \cup (\mathcal{P}_{c_0}\setminus \{s'\}) \cup \bigcup_{c\ne c_0}\mathcal{P}_c \]
    satisfies 
       \begin{equation}
        \norm{x_a}_{\mathcal{P}}^2 = x(a)^2 + 2x(a)S_{s'}(x) + \sum_{c \in C(a)} \norm{x_c}_{\JT}^2
    \end{equation}
    Since $S_{s'}(x_{c_0}) =  S_{c_0}(x_{c_0})$, the first part of the lemma yields that 
    \[  \norm{x_a}_{\mathcal{P}}^2  =  \norm{x_a}_{\JT}^2. \]
    Hence $\mathcal{P}$ is an $x_a$-norming partition.
    
    \end{proof}
    The following is a consequence of the previous result and it is proved by induction.
    \begin{proposition}\label{P5.5} Let $x\in \JT$ with finite support and unique minimal element $a$ of $\mathcal{B}= \supp(x)$. Then:
    \begin{enumerate}
    \item For every canonical $x$-norming partition $\mathcal{P}$, the unique $s\in \mathcal{P}$ such that $a\in s$ satisfies the following:
    \begin{enumerate}
   \item $ S_s(x) = S_a(x)$ 
   \item If $s= s_{a, c'} \cup s'$  with $\min s' = c'$, then we have
        \begin{equation}
        \norm{x}_{\JT}^2 = x(a)^2 + 2x(a)S_{s'}(x) + \sum_{c \in C(a)} \norm{x_c}_{\JT}^2
    \end{equation}
    where $ C(a) = \text{children}(a)$ with respect to the subtree $\mathcal{B}$.
    \end{enumerate}
    
    \item Conversely, for every segment $s$ with $a\in s$ and $S_s(x) = S_a(x)$, there exists a canonical $x$-norming partition 
    $\mathcal{P}$ with $s\in \mathcal{P}$.
   
    \end{enumerate}
    \end{proposition}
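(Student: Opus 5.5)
We argue by induction on the height of the finite tree $\mathcal{B}=\supp(x)$, i.e. on $h(\mathcal{B})=\max\{|b|_{\mathcal{B}}: b\in\mathcal{B}\}$. Implicitly $x$ is positive, as in the whole section. The statement to carry through the induction is the conjunction of (1) and (2) for every positive finitely supported vector whose support has a unique minimal element. We apply it to the vectors $x_c$, $c\in\mathcal{B}$: the support of $x_c$ is $\mathcal{B}\cap W_c$, which has unique minimal element $c$ and strictly smaller height.

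\textbf{Base case.} If $a$ is the only element of $\mathcal{B}$, both claims are immediate. In any canonical partition, the segment $s\ni a$ is a final segment starting at $a$, so $S_s(x)=x(a)=S_a(x)$. Moreover $\norm{x}_{\JT}^2=x(a)^2$, and every final segment from $a$ belongs to some canonical norming partition. Part (1)(b) is vacuous, or reads trivially since $C(a)=\emptyset$.

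\textbf{Inductive step, part (2).} By the inductive hypothesis (2) applied to each $x_c$, $c\in C(a)$, the hypothesis of Lemma \ref{lemma:greedy_step} holds: every final segment $s_c\subset W_c$ with $S_{s_c}(x)=S_c(x)$ lies in some canonical $x_c$-norming partition. Since $x=x_a$, Lemma \ref{lemma:greedy_step}(2) gives (2) directly. Lemma \ref{lemma:greedy_step}(1) also yields the formula
\[\norm{x}_{\JT}^2=x(a)^2+2x(a)S_{c_0}(x)+\sum_{c\in C(a)}\norm{x_c}_{\JT}^2,\qquad S_{c_0}(x)=\max_{c\in C(a)}S_c(x).\]
Note also that $S_a(x)=x(a)+S_{c_0}(x)$, by Remark (2) after Definition \ref{No1}.

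\textbf{Inductive step, part (1).} Let $\mathcal{P}$ be any canonical $x$-norming partition and let $s\ni a$ be its segment. If $s$ meets no child of $a$, then, exactly as in the proof of Lemma \ref{lemma:greedy_step}, we get
\[\norm{x}_{\mathcal{P}}^2\le x(a)^2+\sum_{c\in C(a)}\norm{x_c}_{\JT}^2.\]
This is strictly smaller than the formula above, since $x(a)>0$ and $S_{c_0}(x)>0$, contradicting that $\mathcal{P}$ is norming. So $s=s_{a,c'}\cup s'$ with $\min s'=c'\in C(a)$.

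Split $\mathcal{P}$ into $\{s\}$ and the segments inside each $W_c$. Using Lemma \ref{lem1}(1), together with the fact that $\mathcal{P}_{s'}=\{t\in\mathcal{P}:t\subset W_{c'}\}\cup\{s'\}$ is a partition, we obtain
\[\norm{x}_{\mathcal{P}}^2=x(a)^2+2x(a)S_{s'}(x)+\norm{x_{c'}}_{\mathcal{P}_{s'}}^2+\sum_{c\neq c'}\norm{x_c}_{\mathcal{P}}^2\le x(a)^2+2x(a)S_{c'}(x)+\sum_{c}\norm{x_c}_{\JT}^2.\]
Comparing with the formula, and using $x(a)>0$, forces two things:
\begin{itemize}
\item $S_{s'}(x)=S_{c'}(x)=S_{c_0}(x)$, which gives (a), because $S_s(x)=x(a)+S_{s'}(x)=S_a(x)$;
\item every restricted piece is norming, in particular $\norm{x_{c'}}_{\mathcal{P}_{s'}}=\norm{x_{c'}}_{\JT}$.
\end{itemize}
Substituting these into the displayed equality gives (b) exactly.

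The main obstacle is this squeeze argument. One must make sure that the decomposition of $\norm{x}_{\mathcal{P}}^2$ into a root contribution plus contributions from the wedges $W_c$ is legitimate, that is, that every segment of a canonical partition other than $s$ lies inside a single $W_c$. This holds because the $W_c$ for $c\in C(a)$ are pairwise incomparable, and a segment not containing $a$ that meets $\ran(x)$ below $a$ must start inside one $W_c$. One must also check that all inequalities used are simultaneously sharp. Positivity of $x$ is what makes every term in the comparison have a definite sign.
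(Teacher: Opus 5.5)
Your proof is correct and follows the paper's route. Like the paper, you argue by induction on $h(\mathcal{B})$, carrying (1) and (2) together, and use part (2) for the vectors $x_c$ to supply the hypothesis of Lemma \ref{lemma:greedy_step}, which then gives (2) and the norm formula. The squeeze argument you give for (1) is what the paper leaves implicit in ``Lemma \ref{lemma:greedy_step} yields the result''. Your two readings are also the ones the statement needs to make sense: that $x$ is positive, and that (2) concerns final segments with minimum $a$.
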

    \begin{proof} 
    \begin{enumerate}
    \item We prove by induction on the height $h(\mathcal{B})$ of the tree $\mathcal{B} = \supp(x)$, simultaneously (1) and (2).  
    We recall that 
    
    \[ h(\mathcal{B})= \max\{|a|_\mathcal{B} : a\in \mathcal{B} \}. \]
    
    For $k=1$ the result is trivial. Assume that (1) and (2) are true for all $x\in \JT$ with finite support and $h(\supp(x)) = k$,
    then for $x\in \JT$ with finite support and $h(\supp(x)) = k+1$, Lemma \ref{lemma:greedy_step} yields the result ending the proof.

    \item It is already proved.
        
    \end{enumerate}
\end{proof}
\begin{definition}[Greedy Algorithm]
    For a positive vector $x \in \JT$, the \textbf{Greedy Algorithm} constructs inductively a partition $\mathcal{P}_G$ as follows:
    We set $\mathcal{B} = \supp(x)$ and for $u \in \mathcal{B}$ we set $|u|_\mathcal{B}$ the height of $u$ as an element of the subtree $\mathcal{B}$. The partition $\mathcal{P}_G$ consists of final segments $s$ with $\min s \in \mathcal{B}$.

    \begin{enumerate}
        \item For every minimal node of $\mathcal{B}$ (i.e. $|u|_\mathcal{B}= 1 $), we choose a final segment $s$ with 
        $\min s = u $ and $S_s(x) = S_u(x)$.
        \item Let $k\in \N$ and  $\{ I_a \}_{a\in F_k}$  have been chosen such that 
        \begin{enumerate}
        \item  $I_a $ is a final segment and $\min I_a = a $.
        \item 
        \[ F_k \subset \{ a\in \mathcal{B} : |a|_\mathcal{B} \le k \} \subset \bigcup_{a \in F_k } I_a \]
        \end{enumerate}
      \item In the inductive step we set
       \[ L_{k+1}= \{ a\in \mathcal{B}: |a|_\mathcal{B} = k+1 \} \quad \text{and } \quad M_{k+1} = L_{k+1} \setminus
       \bigcup_{a\in F_k} I_a  \]
       For $a \in M_{k+1} $, we choose a final segment $I_a$ such that
        \[ \min I_a = a \quad \text{ and} \quad S_{I_a}(x_a) = S_a(x_a). \]
       This completes the inductive step.
        \item This process defines a partition $\mathcal{P}_G$ of pairwise disjoint final segments covering the support of the vector $x$.
    \end{enumerate}
\end{definition}
\begin{remark} \label{R5.6}(1) It is clear that for a given  positive $x\in \JT$, the Greedy Algorithm may produce different $\mathcal{P}_G$ partitions since for some $a \in \supp(x)$ we may have multiple choices for the segment $I_a$.

(2) It follows from the inductive definition that for $x\in \JT$ and $ n\in \N$ 
\[ \left\{ a\in \supp(x): |a|_\mathcal{B} \le n \right\} \subset \bigcup \left\{ I_a : |a|_\mathcal{B} \le n\right\} \]

(3) If $a\in \supp(x)$ is such that $I_a \in \mathcal{P}_G$, it is easy to check that the partition 
  $$\mathcal{Q}_a = \{ I_\beta \in \mathcal{P}_G: \beta \in \supp(x_a) \} $$ is an outcome of applying the Greedy Algorithm to the vector $x_a$.
 
 (4) More generally, if $u\in \supp(x)\cap I_a$ for some $I_a\in \mathcal{P}_G$, then the set 
  \[ \mathcal{Q}_u = \{ I_\beta \in \mathcal{P}_G : I_\beta \subset W_u  \} \cup \{I_a \cap W_u\} \]
  is the outcome of Greedy Algorithm applied on $x_u$.

 \end{remark}
 
\begin{proposition}\label{prop.5.6}
    Let $x \in \JT$ be a positive vector with finite support. Then the Greedy Algorithm produces a canonical $x$-norming partition.
\end{proposition}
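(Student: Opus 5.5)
We argue by induction on the height $h(\mathcal{B})$ of $\mathcal{B}=\supp(x)$, using Proposition \ref{P5.5} at each minimal node together with the self-similarity of the Greedy Algorithm recorded in Remark \ref{R5.6}(3),(4). First we reduce to a single minimal node. If $M$ is the set of minimal nodes of $\mathcal{B}$, the wedges $W_a$, $a\in M$, are pairwise disjoint and incomparable. No segment can meet two of them nontrivially without passing through a common ancestor outside $\supp(x)$, and cutting at such points does not change the sums. Hence
\[ \norm{x}_{\JT}^2=\sum_{a\in M}\norm{x_a}_{\JT}^2 . \]
By Remark \ref{R5.6}(3), the Greedy partition $\mathcal{P}_G$ restricted to each $W_a$ is a Greedy outcome for $x_a$. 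So it suffices to prove the statement when $\mathcal{B}$ has a unique minimal element $a$; the general case follows by summing, and canonicity is clear since all segments are final with minimum in $\supp(x)$.

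Now fix $x\ge 0$ with finite support, unique minimal node $a$, and $h(\mathcal{B})=k+1$, and assume the claim for all heights $\le k$. The Greedy Algorithm first picks a final segment $I_a$ with $\min I_a=a$ and $S_{I_a}(x)=S_a(x)$. Write $I_a=s_{a,c_0}\cup s'$ with $c_0\in C(a)$ and $\min s'=c_0$; by positivity and maximality of $S_a$ we may assume $I_a$ actually enters some child wedge, unless $a$ is maximal, which is trivial. Then:
\begin{itemize}
\item $S_{s'}(x)=S_{c_0}(x)=\max_{c\in C(a)}S_c(x)$.
\item By Remark \ref{R5.6}(4), $\mathcal{Q}_{c_0}=\{I_\beta\in\mathcal{P}_G: I_\beta\subset W_{c_0}\}\cup\{s'\}$ is a Greedy outcome for $x_{c_0}$.
\item By Remark \ref{R5.6}(3), for $c\ne c_0$ the family $\mathcal{Q}_c=\{I_\beta\in\mathcal{P}_G:\beta\in W_c\}$ is a Greedy outcome for $x_c$.
\end{itemize}
Each $x_c$ has support of height $\le k$ with unique minimal node $c$, so by the inductive hypothesis each $\mathcal{Q}_c$ is $x_c$-norming. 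Therefore
\[ \norm{x}_{\mathcal{P}_G}^2=(x(a)+S_{s'}(x))^2-S_{s'}(x)^2+\sum_{c\in C(a)}\norm{x_c}_{\JT}^2 = x(a)^2+2x(a)S_{c_0}(x)+\sum_{c\in C(a)}\norm{x_c}_{\JT}^2 . \]
The subtraction of $S_{s'}(x)^2$ is there because $s'\in\mathcal{Q}_{c_0}$ is replaced by $I_a$. By Lemma \ref{lemma:greedy_step}(1), or equivalently Proposition \ref{P5.5}(1)(b), the right-hand side equals $\norm{x}_{\JT}^2$. The hypothesis of that lemma (every maximal-sum final segment from $c$ lies in some canonical $x_c$-norming partition) is supplied by Proposition \ref{P5.5}(2) applied to $x_c$.

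The main obstacle is making the use of Remark \ref{R5.6}(4) rigorous. One has to check that truncating $I_a$ to $s'$ yields a legitimate Greedy outcome for $x_{c_0}$, in particular that $s'$ satisfies $S_{s'}(x_{c_0})=S_{c_0}(x_{c_0})$. This follows because any final segment from $c_0$ with larger sum, prefixed by $s_{a,c_0}$, would beat $I_a$. The remaining work is bookkeeping of relative heights, so that the nodes in $W_c$ not covered by $s'$ are processed in the same order as when the algorithm runs directly on $x_c$.
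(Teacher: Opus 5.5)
Your proof is correct and follows essentially the same route as the paper. Like the paper, you induct on $h(\mathcal{B})$, reduce to a unique minimal node, use Remark \ref{R5.6}(3),(4) to see that the pieces of $\mathcal{P}_G$ in the child wedges are Greedy outcomes, and arrive at $\norm{x}_{\mathcal{P}_G}^2 = x(a)^2+2x(a)S_{c_0}(x)+\sum_{c\in C(a)}\norm{x_c}_{\JT}^2$; where the paper simply asserts that $S_{s'}(x)$ maximizes this expression, you make the last step explicit via Lemma \ref{lemma:greedy_step}(1), with its hypothesis supplied by Proposition \ref{P5.5}(2) applied to each $x_c$, which tightens the argument without changing it.
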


\begin{proof} For $x\in \JT $ positive with finite support, we set $\mathcal{B} = \supp(x) $ the finite subtree of $\Tree$.
We proceed by induction on $h(\mathcal{B})$. If $h(\mathcal{B}) =1 $, then $\supp(x)$ consists of finitely many maximal
elements and clearly the partition of the singletons in $\supp(x)$ defines both the unique outcome of the Greedy Algorithm 
applied to $x$ as well as the unique canonical $x$-norming partition. Assume now that the conclusion is true for all positive $x$ with $h(\mathcal{B})\le k$, and let $x$ be positive with $h(\mathcal{B})=k+1$. For simplicity, assume also that $\mathcal{B}$ has a unique minimal point. The general case follows by applying the following proof to each minimal element of $\mathcal{B}$.

Let $\mathcal{P}_G$ be the outcome of Greedy algorithm applied to $x$ and $a$ the minimal element of $\mathcal{B}$.
Let $s_a \in \mathcal{P}_G $ with $\min s_a = a$. Clearly $s_a$ is a final segment. Set $c'$ to be the child of $a$ in $\mathcal{B}$ such that $c' = \min( s_a \cap W_{c'})$, and also set $s' = s_a\cap W_{c'}$.

From Remark \ref{R5.6} (3) we have that for every   $ c \in C(a) = \operatorname{children}(a)$ in $\mathcal{B}$ 
 $c\ne c'$, the family 
$$\mathcal{P}_c = \left \{ s_\beta : s_\beta \subset W_c \right \} $$
is the outcome of the Greedy Algorithm applied to $x_c$. Hence, by induction, it defines a canonical $x_c$ partition.

Also, from Remark \ref{R5.6} (4), the family 
  $$ \mathcal{P}_{c'} = \left \{ s_\beta : s_\beta \subset W_{c'} \right \} $$
 is the outcome of the Greedy Algorithm applied to $x_{c'}$ and, again by inductive assumption, is a canonical $x_{c'}$-norming partition.
 
 Applying the partition $\mathcal{P}_G$ to $x$, we obtain
        \begin{equation} \label{eq7}
         \norm{x}_{\mathcal{P}_G }^2 = x(a)^2 + 2x(a)S_{s'}(x) + \sum_{c \in C(a)} \norm{x_c}_{\JT}^2
    \end{equation}
From the fundamental properties of the Greedy Algorithm, we have that
  $$  S_{s'} (x) = \max \left\{ S_c(x) : c\in C(a) \right \} $$
 Therefore $S_{s'}(x)$ maximizes \eqref{eq7} for the various partitions; hence
$$ \norm{x}_{\JT} = \norm{x}_{\mathcal{P}_G}. $$
This ends the proof.
\end{proof}

\subsection{The General Case: Infinite Support}

We extend the validity of the Greedy Algorithm to all positive vectors $x \in \JT$ with infinite support. The strategy relies on approximating $x$ by vectors supported on subtrees consisting of finitely many branches, where the algorithm is shown to be norming.

\subsubsection{Approximation by Finitely Branched Subtrees}

\begin{notation}\label{n5}Let $x \in \JT$ be a positive vector. Let $\mathcal{S} = \{s_\alpha\}_{\alpha \in F}$ be the family of the final segments generated by the Greedy Algorithm applied to $x$, where $F \subset \supp(x)$ is the set of starting nodes of these segments, or it is a canonical $x$-norming partition with the same properties.

For each $n \in \N$, we define the truncated set of starting nodes:
\[ F_n = \{ \alpha \in F : |\alpha| \le n \}. \]
We define the approximating domain $Q_n$ as:
\[ Q_n = T_n \cup \bigcup_{\alpha \in F_n} s_\alpha, \]
where $T_n = \{ u \in \Tree : |u| \le n \}$. Note that $Q_n$ is a \textbf{complete subtree} of $\Tree$.
We define the vector $y_n$ as the restriction of $x$ to $Q_n$:
\[ y_n = x \cdot \chi_{Q_n}. \]
Notice that $\supp(y_n) \subset \bigcup_{\alpha \in F_n} s_{\alpha}$.

We define in the same manner  $Q_n$, $y_n$ when we start with $\mathcal{S} = \{s_\alpha\}_{\alpha \in F}$ as a canonical $x$-norming partition.
\end{notation}

\begin{proposition}\label{P5.9} Let $x\in \JT$ and $\mathcal{P} = \{ I_a\}_{a\in F}$ be a family of pairwise disjoint final segments of $\Tree$ with $F\subset \supp(x)$ and $a = \min I_a$. The following are equivalent:
\begin{enumerate}
\item The family $\mathcal{P}$ is the outcome of the Greedy Algorithm.
\item The family $\mathcal{P}$ covers $\supp(x)$ and for every $a \in F$ we have $S_a(x) = S_{I_a}(x).$
\end{enumerate}
\end{proposition}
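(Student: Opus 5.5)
(1)$\Rightarrow$(2) is read off the definition of the Greedy Algorithm; (2)$\Rightarrow$(1) reconstructs the inductive choices level by level, using that the segments are disjoint and cover $\supp(x)$. Throughout, $x$ is positive (as in the definition of the algorithm), $\mathcal{B}=\supp(x)$, and $|\cdot|_{\mathcal{B}}$ is the height in $\mathcal{B}$.

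\textbf{(1)$\Rightarrow$(2).} Suppose $\mathcal{P}=\{I_a\}_{a\in F}$ is produced by the Greedy Algorithm. The sets $F_k$ and the segments $I_a$, $a\in F_k$, are built inductively. At step $k+1$ a new segment $I_a$ is chosen exactly for the nodes $a\in M_{k+1}$, and that choice satisfies $S_{I_a}(x_a)=S_a(x_a)$. Because $I_a\subset W_a$ and every segment starting at $a$ lies in $W_a$, we have $S_{I_a}(x_a)=S_{I_a}(x)$ and $S_a(x_a)=S_a(x)$. Hence $S_{I_a}(x)=S_a(x)$ for every $a\in F$. Covering follows from Remark \ref{R5.6}(2): every $a\in\supp(x)$ has finite height $n=|a|_{\mathcal{B}}$, so it lies in $\bigcup\{I_b:|b|_{\mathcal{B}}\le n\}$.

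\textbf{(2)$\Rightarrow$(1).} Assume $\mathcal{P}$ covers $\supp(x)$ and $S_{I_a}(x)=S_a(x)$ for all $a\in F$. I show by induction on $k$ that one run of the algorithm selects exactly the segments $I_a$ with $a\in F$, $|a|_{\mathcal{B}}\le k$. Equivalently, I claim that at each stage $F_k$ equals $\{a\in F:|a|_{\mathcal{B}}\le k\}$, with the same segments $I_a$.

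\emph{Base case.} Let $u$ be a minimal node of $\mathcal{B}$. Since $\mathcal{P}$ covers $\supp(x)$, $u\in I_a$ for some $a\in F$. Here $a=\min I_a\in\supp(x)$ and $a\le u$, so minimality forces $a=u$. Thus $u\in F$, and $I_u$ is an admissible first-step choice because $S_{I_u}(x)=S_u(x)$. Conversely, any $a\in F$ with $|a|_{\mathcal{B}}=1$ is such a minimal node.

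\emph{Inductive step.} Assume the claim up to stage $k$. Then $M_{k+1}$ is the set of height-$(k+1)$ nodes not covered by $\{I_b: b\in F,\ |b|_{\mathcal{B}}\le k\}$. I show $M_{k+1}=\{a\in F:|a|_{\mathcal{B}}=k+1\}$.

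First take $a\in M_{k+1}$. By covering, $a\in I_b$ for some $b\in F$ with $b\le a$. Since $b\in\supp(x)$ and $b\le a$, we get $|b|_{\mathcal{B}}\le k+1$. The case $|b|_{\mathcal{B}}\le k$ is excluded because $a$ is uncovered by those segments. So $|b|_{\mathcal{B}}=k+1$, and $b$ lies in the chain of support nodes below $a$, all of which have height at most $k+1$; hence $b=a$ and $a\in F$.

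Conversely, take $a\in F$ with $|a|_{\mathcal{B}}=k+1$. By pairwise disjointness, $a$ lies in no other $I_b$, so $a$ is uncovered and $a\in M_{k+1}$. Choosing $I_a$ at this step is legitimate since $S_{I_a}(x_a)=S_a(x_a)$. Finally, every $a\in F$ has finite height, so all of $\mathcal{P}$ is produced and nothing else is.

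\textbf{Main obstacle.} The step needing care is the identification of $M_{k+1}$. One must check that no segment from an earlier stage swallows a node of $F$; this uses disjointness together with $\min I_a=a$. One must also check the height bookkeeping in $\mathcal{B}$, namely that $b\le a$ with $b,a\in\mathcal{B}$ implies $|b|_{\mathcal{B}}\le|a|_{\mathcal{B}}$, with equality only when $b=a$. Finally, the algorithm allows arbitrary choices among maximizing segments, so "outcome of the algorithm" should be read as "some run of the algorithm," and the induction above exhibits such a run.
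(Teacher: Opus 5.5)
Your proposal is correct and follows the paper's route: (1)$\Rightarrow$(2) is read off the definition, and (2)$\Rightarrow$(1) is an induction on the height $k$ in $\mathcal{B}=\supp(x)$, showing that $\{I_a : |a|_{\mathcal{B}}\le k\}$ is the output of the first $k$ steps of the Greedy Algorithm. The paper leaves this induction to the reader. Your identification of $M_{k+1}$ with $\{a\in F : |a|_{\mathcal{B}}=k+1\}$ supplies exactly those omitted details, using pairwise disjointness, $\min I_a=a$, and the fact that $b<a$ in $\mathcal{B}$ forces $|b|_{\mathcal{B}}<|a|_{\mathcal{B}}$.
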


\begin{proof} (1) $\Rightarrow$ (2). It is clear from the definition of the Greedy Algorithm.

(2) $\Rightarrow$ (1). We set $\mathcal{B} = \supp(x)$ and using induction we prove that for $k\in \N$,
the family $ \mathcal{P}_k = \{ I_a : |a|_\mathcal{B} \le k\}$ is the outcome of the first $k$ steps of the Greedy Algorithm.
The proof is straightforward and it is left to the reader. This completes the proof.
\end{proof}
\begin{lemma}[Approximation and Stability] \label{lemma5.6}
    Let $x \in \JT$ be positive and $\mathcal{S} = \{ s_{\alpha} : \alpha \in F \} $ be a partition generated by the Greedy Algorithm on $x$. For $y_n$ as above, the following hold:
    \begin{enumerate}
        \item The family $\mathcal{S}_n = \{ s_\alpha : \alpha \in F_n \} $ is the outcome of the Greedy Algorithm applied to $y_n$.
        \item The sequence $(y_n)_n$ converges in norm: $\norm{x - y_n}_{\JT} \to 0$.
    \end{enumerate}
\end{lemma}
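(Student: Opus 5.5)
For part (1) I will use the characterization in Proposition \ref{P5.9}: it is enough to show that $\mathcal{S}_n$ covers $\supp(y_n)$ and that $S_{s_\alpha}(y_n) = S_\alpha(y_n)$ for every $\alpha \in F_n$.

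\textbf{Covering.} Since $\supp(y_n) \subset \bigcup_{\alpha\in F_n} s_\alpha$ (noted in Notation \ref{n5}), the family $\mathcal{S}_n$ covers $\supp(y_n)$.

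\textbf{Maximality.} First, $s_\alpha \subset Q_n$, so $S_{s_\alpha}(y_n)=S_{s_\alpha}(x)=S_\alpha(x)$. Next, $y_n \le x$ pointwise because $x\ge 0$, and hence $S_\alpha(y_n)\le S_\alpha(x)$. Combining the two gives
\[ S_\alpha(y_n)\le S_\alpha(x)=S_{s_\alpha}(y_n)\le S_\alpha(y_n), \]
so equality holds throughout. One point needs checking: that every $\alpha\in F_n$ actually lies in $\supp(y_n)$. This is immediate, since $\alpha\in s_\alpha\subset Q_n$ and $x(\alpha)\neq 0$. Proposition \ref{P5.9} then shows that $\mathcal{S}_n$ is a Greedy outcome for $y_n$.

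For part (2), I first observe that $Q_n \supset T_n$ and that $Q_n$ is increasing in $n$: $T_n$ grows and $F_n$ grows. Set $z_n = x - y_n = x\chi_{\Tree\setminus Q_n}$. This vector is supported in $\Tree\setminus T_n$, i.e. on nodes of level greater than $n$.

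Let $G_n$ be the set of level-$(n+1)$ nodes. By Lemma \ref{lem1}(1) and the decoupling over incomparable wedges used earlier,
\[ \norm{z_n}^2\le \norm{x\chi_{\Tree\setminus T_n}}^2 = \sum_{c\in G_n}\norm{x_c}^2. \]
The first inequality needs care, because $\Tree\setminus Q_n$ need not be a complete subtree. I would handle this as follows. Each $W_c$ is a complete subtree, and $Q_n\cap W_c$ is a complete subtree of $W_c$ containing $c$, so it is a rooted subtree. Therefore $W_c\setminus Q_n$ is a disjoint union of wedges $W_d$ with $d$ pairwise incomparable. It follows that
\[ \norm{z_n}^2=\sum_{d}\norm{x_d}^2\le \norm{x\chi_{\Tree\setminus T_n}}^2, \]
where the last step applies the decoupling identity to the incomparable family $\{d\}$ together with Lemma \ref{lem1}(1), since $\bigcup_d W_d\subset \Tree\setminus T_n$ is a complete subtree.

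Finally, I invoke the tail fact quoted at the end of the proof of the local-isolation proposition: for incomparable families $F_n$ with minimal level tending to infinity, $\sum_{c\in F_n}\norm{x|_{W_c}}^2\to 0$. This gives $\norm{x-y_n}\to0$.

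The main obstacle is this comparison step in part (2), where $\Tree\setminus Q_n$ is not itself a complete subtree; the decomposition of each $W_c\setminus Q_n$ into incomparable wedges is what resolves it.
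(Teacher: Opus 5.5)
Your proof is correct. Part (1) follows the paper: the paper only asserts that $\mathcal{S}_n$ satisfies condition (2) of Proposition \ref{P5.9}. Your sandwich $S_\alpha(y_n)\le S_\alpha(x)=S_{s_\alpha}(y_n)\le S_\alpha(y_n)$, together with the check $F_n\subset\supp(y_n)$, is the verification it leaves to the reader.

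Part (2) uses a different decomposition. The paper writes $x-y_n=x|_{\Tree\setminus T_n}-x|_{R_n}$ with $R_n=Q_n\setminus T_n$. It notes that $R_n$ is a complete subtree contained in $\Tree\setminus T_n$, uses Lemma \ref{lem1}(1) to get $\norm{x|_{R_n}}_{\JT}\le\norm{x|_{\Tree\setminus T_n}}_{\JT}$, and finishes with the triangle inequality, losing a factor $2$. You bound $x-y_n=x|_{\Tree\setminus Q_n}$ directly, which gives the slightly sharper $\norm{x-y_n}_{\JT}\le\norm{x|_{\Tree\setminus T_n}}_{\JT}$.

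However, the obstacle you identify is not real, and the wedge decomposition can be dropped. The set $Q_n$ is closed downward: an ancestor of a node $u\in s_\alpha$ with $\alpha\in F_n$ lies either in $[\alpha,u]\subset s_\alpha$ or strictly below $\alpha$, hence in $T_n$. So $\Tree\setminus Q_n$ is closed upward, hence convex, hence a complete subtree in the paper's sense. In fact, the set $\bigcup_d W_d$ to which you apply Lemma \ref{lem1}(1) in your last step is exactly $\Tree\setminus Q_n$, so your argument already uses the property you said might fail. A single application of Lemma \ref{lem1}(1) to $x|_{\Tree\setminus T_n}$ and the complete subtree $\Tree\setminus Q_n$ gives the inequality directly, without the decoupling identity.

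There is also a minor slip in the detour: $Q_n\cap W_c$ need not contain $c$. It is empty when no $s_\alpha$ with $\alpha\in F_n$ passes through $c$, but then $W_c\setminus Q_n=W_c$ and your conclusion is unaffected.
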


\begin{proof}
    \textbf{1. Stability of the Greedy Choice:}
    It is easy to check that the family $\mathcal{S}_n = \{ s_\alpha : \alpha \in F_n \} $ satisfies the assumptions of Proposition \ref{P5.9} (2); hence the proposition yields that it is the outcome of the Greedy Algorithm applied to $y_n$.
    
    \textbf{2. Convergence:}
    Observe that the set $R_n = Q_n \setminus T_n$ is a complete subtree of $\Tree$. Hence, setting 
    $z_n = y_n|_{R_n}$ and $x_n = x|_{\Tree \setminus T_n}$, Lemma \ref{lem1} yields 
    \[ \norm{z_n}_{\JT} \le \norm{x_n}_{\JT}. \]
    Therefore, $\norm{z_n}_{\JT} \to 0$ and $\norm{x_n}_{\JT} \to 0$.
    Since $x - y_n = x_n - z_n$, we conclude that \[\norm{x - y_n}_{\JT} \to 0. \]
\end{proof}

\subsubsection{Verification for Finitely Many Branches}\label{lemma5.7}

The vectors $y_n$ constructed above have a specific property: their support is contained in $T_n$ plus a finite collection of segments. Thus, $\supp(y_n)$ is contained in a finite union of branches.

\begin{lemma}[Finitely Many Branches Case]\label{L5.8}
    Let $z \in \JT$ be a positive vector such that $\supp(z)$ is contained in a union of finitely many branches. Then the Greedy Algorithm produces a canonical $z$-norming partition.
\end{lemma}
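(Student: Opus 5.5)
We plan to reduce Lemma \ref{L5.8} to the finite-support case, Proposition \ref{prop.5.6}, by a truncation argument along the finitely many branches $\sigma_1,\dots,\sigma_m$ containing $\supp(z)$. The key observation is that the tree structure of $\supp(z)$ is "finite in width". The set of branching nodes of $\bigcup_i \sigma_i$ is finite. Hence there is a level $N_0$ beyond which the branches $\sigma_i$ are pairwise disjoint, so the tails $\sigma_i\setminus T_{N_0}$ are pairwise incomparable segments. On each such tail the $\JT$-norm is the James norm of a single branch, and any canonical segment through a tail node simply runs down that branch.

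First we fix an outcome $\mathcal{P}_G=\{I_a\}_{a\in F}$ of the Greedy Algorithm on $z$. Since $\mathcal{P}_G$ covers $\supp(z)$, it suffices to show $\norm{z}_{\JT}\le \norm{z}_{\mathcal{P}_G}$. For $N\ge N_0$ we then consider the finitely supported vectors $z^{(N)}=z\cdot\chi_{T_N}$. By Proposition \ref{P1}(ii) and the density argument used there, we have $\norm{z^{(N)}}_{\JT}\to\norm{z}_{\JT}$; here we use that $z\in\JT$ forces $\norm{z|_{\Tree\setminus T_N}}_{\JT}\to 0$, as in Lemma \ref{lemma5.6}. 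Applying Proposition \ref{prop.5.6} to $z^{(N)}$, any greedy outcome $\mathcal{P}^{(N)}$ for $z^{(N)}$ is $z^{(N)}$-norming.

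The crux is to compare $\mathcal{P}^{(N)}$ with $\mathcal{P}_G$. By Proposition \ref{P5.9}, a family of final segments covering the support is a greedy outcome iff each segment realizes $S_a$. For $a\in F$ with $|a|\le N$ we have $S_a(z^{(N)})\to S_a(z)$. This holds because only finitely many branches pass through $a$, so $S_a(z)$ is a maximum over at most $m$ segment sums, each of which converges by positivity and monotone convergence. Consequently, for large $N$, every maximizing segment for $z^{(N)}$ at $a$ lies, below level $N_0$, on a branch that is optimal in the limit, up to ties. We handle ties by choosing $\mathcal{P}^{(N)}$ to follow $I_a\cap T_N$ whenever $I_a$ is also optimal for $z^{(N)}$, and otherwise by passing to a subsequence via the compactness in Proposition \ref{P1}(ii) and Remark \ref{Re1}, obtaining a pointwise limit partition $\mathcal{P}$. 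Since each $\mathcal{P}^{(N)}$ is $z^{(N)}$-norming and $z^{(N)}\to z$ in norm, $\mathcal{P}$ is $z$-norming.

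It then remains to see that $\mathcal{P}$ is itself a greedy outcome, and that all greedy outcomes give the same value $\norm{z}_{\mathcal{P}}$. For the first, pointwise limits of segments realizing $S_a(z^{(N)})$ realize $S_a(z)$, by the finite-branch continuity above, so Proposition \ref{P5.9} applies. For the second, we argue by induction on the finitely many branching levels, using Lemma \ref{lemma:greedy_step}(1) in place of finite induction: the recursive formula shows that the value depends only on the numbers $S_c(z)$ and not on which maximizer is chosen. We expect this tie-handling step, namely showing that the limiting partition and the originally fixed $\mathcal{P}_G$ have equal value, to be the main obstacle. We will first try to derive it directly from the recursion at the finitely many branching nodes, where the infinite tails are handled as single James-space branches.
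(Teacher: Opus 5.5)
Your argument is correct in outline, but it takes a different route from the paper.

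\textbf{The paper's route.} The paper does not truncate; it \emph{collapses} the tails. It chooses $N$ beyond which the branches are separated and sets $y(u)=\sum_{\beta\ge u}z(\beta)$ at the level-$N$ node $u$ of each branch, with $y=z$ above level $N$. Then $y$ is finitely supported and $\mathcal{P}_G$ is essentially still a greedy outcome for $y$. Moreover $\norm{y}_{\mathcal{P}_G}=\norm{z}_{\mathcal{P}_G}$ and $\norm{y}_{\JT}=\norm{z}_{\JT}$, so Proposition \ref{prop.5.6} finishes at once. No limits or ties arise, because collapsing preserves the relevant segment sums exactly.

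\textbf{Your route.} Your truncation $z\chi_{T_N}$ breaks the greedy structure of the fixed $\mathcal{P}_G$, since ties may be resolved differently at every finite stage. That is what forces you into compactness plus a separate choice-independence step. Two remarks follow.

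First, your compactness half uses no hypothesis on $z$. Continuity of $s\mapsto S_s(z)$ holds for every $z\in\JT$, because $\sup_t|\sum_{t\setminus T_L}z|\le\norm{z\chi_{\Tree\setminus T_L}}_{\JT}\to0$. So this half gives, for every positive $z$, some norming greedy outcome. Also, ``only finitely many branches pass through $a$'' should read ``only finitely many of the $\sigma_i$''.

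Second, the step you defer is by itself a complete proof, which makes the truncation and compactness unnecessary. Let $K$ be the set of $a\in\supp(z)$ whose wedge-support is not a chain; all of these lie in $T_{N_0}$, so $K$ is finite. Show, starting from the maximal elements of $K$, that every greedy outcome on $z_a$ is $z_a$-norming:
\begin{itemize}
\item Off $K$, this is the single-branch fact $\norm{z_a}_{\JT}^2=(\sum z_a)^2$, attained by the one greedy segment.
\item At $a\in K$, the induction hypothesis gives exactly the hypothesis of Lemma \ref{lemma:greedy_step}. Its part (1) then yields the norm recursion.
\item Remark \ref{R5.6}(3),(4) shows that the value of any greedy outcome satisfies the same recursion, so the two coincide.
\end{itemize}
Do not apply Lemma \ref{lemma:greedy_step}(1) to greedy values before securing its hypothesis: it is a statement about the norm, not about the value of a greedy outcome.
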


\begin{proof}
    Let $\mathcal{B} = \{b_1, \dots, b_k\}$ be the finite set of branches containing $\supp(z)$.
    There exists a level $N$ of $\Tree$, sufficiently large, such that for any node $u$ with $|u| \ge N$, the wedge $W_u$ intersects at most one branch from $\mathcal{B}$.
    
    For any such node $u\in \supp(z)$ (with $|u| = N$), the restriction $z_u = z|_{W_u}$ is supported on a single path (a subset of some $b_i$). For a positive vector on a line, the Greedy Algorithm trivially selects the entire path (or the maximal positive sub-segment), which corresponds to the sum along that path. This is known to be the norming partition for the restriction of a positive vector to a branch (James space case).
    
    Let $\mathcal{P}_G = \{I_a\}_{a\in F}$ be a partition generated by the Greedy Algorithm applied to $z$ with each $I_a$ a final segment of $\Tree$ and $ a = \min I_a \in \supp(z)$. It follows from the preceding paragraph that every $a\in F$ 
    satisfies $|a|_\Tree \le N$.
    
    We define a new vector $y$ as follows: 
    \[
     y(a) = 
     \begin{cases}
      z(a) \quad & \text{if} \quad  |a|_\Tree < N \\
      \sum_{a \le \beta} z(\beta) \quad & \text{if}  \quad  |a|_\Tree = N \: \text{and} \: a\in b_k \in \mathcal{B} \\
      0 \quad & \text{ otherwise}
     \end{cases}\]
     Observe that $y$ has a finite support and the above partition $\mathcal{P}_G$ is also the outcome of the Greedy Algorithm applied to $y$ (since, as we noticed before, for every $I_a\in \mathcal{P}_G$ we have $|a|_\Tree \le N$). Hence Proposition \ref{prop.5.6} derives that $\mathcal{P}_G$ is a canonical $y$-norming partition. Observe also that       \[ \norm{y}_{\mathcal{P}_G} = \norm{z}_{\mathcal{P}_G}. \]
     It remains to show that    $ \norm{y}_{\JT}= \norm{z}_{\JT}$. For this, observe that for every canonical $y$-norming partition $\mathcal{P}$, we may consider the every $ s\in\mathcal{P}$ satisfies that $ s\subset b_m$ for some $1\le m \le k$. Then we have $\norm{y}_{\mathcal{P}} = \norm{z}_{\mathcal{P}}$, hence 
        \[ \norm{y}_{\JT} \le \norm{z}_{\JT}. \]
       Consider a canonical $z$-norming partition $\mathcal{Q} = \{ s_a : a\in P\} $ with each $s_a$ a final segment of $\Tree$ and $a= \min s_a \in \supp(z)$. As in the case of $\mathcal{P}_G$, every $a\in P$ must satisfy $|a|_\Tree \le N$, hence we have that 
    \[ \norm{y}_{\mathcal{Q}} = \norm{z}_{\mathcal{Q}} \quad \text{yielding}  \quad  \norm{z}_{\JT} \le \norm{y}_{\JT}. \]
    From the above, we conclude that $\mathcal{P}_G$ is a canonical $z$-norming partition, ending the proof.

    \end{proof}
      
\subsubsection{Main Result for Positive Vectors}

We combine the approximation and the finite branch case to prove the result for general positive vectors.

\begin{theorem}\label{T5.9}
    Let $x \in \JT$ be a positive vector. The partition $\mathcal{S}$ generated by the Greedy Algorithm is a canonical $x$-norming partition.
\end{theorem}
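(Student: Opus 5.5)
The plan is to combine the approximation of Lemma \ref{lemma5.6} with the finitely-many-branches case (Lemma \ref{L5.8}) and pass to the limit. Let $\mathcal{S}=\{s_\alpha\}_{\alpha\in F}$ be an outcome of the Greedy Algorithm applied to $x$. We form $Q_n$ and $y_n=x\cdot\chi_{Q_n}$ as in Notation \ref{n5}. Since $\supp(y_n)\subset\bigcup_{\alpha\in F_n}s_\alpha$ and $F_n$ is finite, $\supp(y_n)$ lies in finitely many branches. By Lemma \ref{lemma5.6}(1), $\mathcal{S}_n=\{s_\alpha:\alpha\in F_n\}$ is an outcome of the Greedy Algorithm for $y_n$, so Lemma \ref{L5.8} shows that $\mathcal{S}_n$ is a canonical $y_n$-norming partition:
\[ \norm{y_n}_{\JT}=\norm{y_n}_{\mathcal{S}_n}. \]

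Next we compare $\norm{y_n}_{\mathcal{S}_n}$ with $\norm{x}_{\mathcal{S}}$. For $\alpha\in F_n$ we have $s_\alpha\subset Q_n$, so $S_{s_\alpha}(y_n)=S_{s_\alpha}(x)$. Every other segment of $\mathcal{S}$ meets $\supp(y_n)$ trivially. Hence
\[ \norm{y_n}_{\mathcal{S}_n}=\norm{y_n}_{\mathcal{S}}\le\norm{x}_{\mathcal{S}}, \]
where the inequality holds because $\norm{y_n}_{\mathcal{S}}^2=\sum_{\alpha\in F_n}S_{s_\alpha}(x)^2\le\sum_{\alpha\in F}S_{s_\alpha}(x)^2$. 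Alternatively, we can use the seminorm estimate $|\norm{y_n}_{\mathcal{S}}-\norm{x}_{\mathcal{S}}|\le\norm{x-y_n}_{\JT}$. Either way,
\[ \norm{x}_{\JT}\ge\norm{x}_{\mathcal{S}}\ge \norm{y_n}_{\mathcal{S}}=\norm{y_n}_{\JT}. \]
By Lemma \ref{lemma5.6}(2), $\norm{y_n}_{\JT}\to\norm{x}_{\JT}$, and letting $n\to\infty$ gives $\norm{x}_{\mathcal{S}}=\norm{x}_{\JT}$.

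Finally we check that $\mathcal{S}$ is canonical. By the construction of the Greedy Algorithm, its segments are pairwise disjoint final segments with minima in $\supp(x)$; Proposition \ref{P5.9} records exactly this, and also that they cover $\supp(x)$. Combined with the norming property, this makes $\mathcal{S}$ a canonical $x$-norming partition.

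The main obstacle is the legitimacy of the inputs rather than the limit argument. First, the Greedy Algorithm must be well defined for infinite support, meaning the suprema $S_a(x)$ are attained; this is covered by the remark after Notation \ref{No1}. Second, in Lemma \ref{lemma5.6}(1) we must verify, via Proposition \ref{P5.9}, that $S_{s_\alpha}(y_n)=S_\alpha(y_n)$ for each $\alpha\in F_n$. This reduces to $S_\alpha(y_n)\le S_\alpha(x)=S_{s_\alpha}(x)=S_{s_\alpha}(y_n)$, which holds by positivity of $x$. We also need that $\mathcal{S}_n$ covers $\supp(y_n)$. This is the point to check carefully: any node of $\supp(x)$ with $|u|\le n$ must lie in some $s_\alpha$ with $|\alpha|\le n$, because $\min s_\alpha\le u$ forces $|\alpha|\le|u|$. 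I would verify this first, since the rest is the routine limiting argument above.
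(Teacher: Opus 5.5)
Your proposal is correct and follows the paper's proof essentially step for step. You approximate by $y_n=x\cdot\chi_{Q_n}$ and use Lemma~\ref{lemma5.6}(1) together with Lemma~\ref{L5.8} to get $\norm{y_n}_{\JT}=\norm{y_n}_{\mathcal{S}_n}\le\norm{x}_{\mathcal{S}}$, then let $n\to\infty$ using Lemma~\ref{lemma5.6}(2); your extra checks fill in details the paper leaves to the reader, namely that $\mathcal{S}_n$ covers $\supp(y_n)$ (since $\min s_\alpha\le u$ forces $|\alpha|\le|u|$) and that $S_\alpha(y_n)=S_{s_\alpha}(y_n)$ by positivity.
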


\begin{proof}
    Let $(y_n)_n$ be the sequence of approximations defined on $Q_n$.
    By Notation \ref{n5}, $\supp(y_n)$ consists of the finite tree $T_n$ and the segments $\{s_\alpha\}_{\alpha \in F_n}$. This support is covered by finitely many branches. Let also $\mathcal{P}_G$ be the outcome of the Greedy Algorithm applied to $x$. We set $\mathcal{P}_n = \{ s_a : \alpha \in F_n\}$; from 
    Lemma \ref{lemma5.6}, $\mathcal{P}_n$ is the outcome of the Greedy Algorithm applied to $y_n$. From Lemma \ref{L5.8} we have that $\mathcal{P}_n$ is a canonical $y_n$-norming partition, hence
    \[ \norm{y_n}_{\JT} = \norm{y_n}_{\mathcal{P}_n}. \]
    
    Also, we have that
    \[ \mathcal{P}_G = \bigcup_n \mathcal{P}_n. \]
    Since $x \ge y_n \ge 0$, and the partition $\mathcal{P}_G$ is consistent with $\mathcal{P}_n$ on the support of $y_n$, we have:
    \[ \norm{x}_{\mathcal{P}_G} \ge \norm{y_n}_{\mathcal{P}_G} \ge \norm{y_n}_{\mathcal{P}_n} = \norm{y_n}_{\JT}. \]
    Taking the limit as $n \to \infty$:
    \[ \norm{x}_{\mathcal{P}_G} \ge \lim_{n \to \infty} \norm{y_n}_{\JT} = \norm{x}_{\JT}. \]
    Since $\norm{x}_{\mathcal{P}_G} \le \norm{x}_{\JT}$ by definition, we conclude $\norm{x}_{\mathcal{P}_G} = \norm{x}_{\JT}$.
    Thus, the Greedy Algorithm defines a canonical $x$-norming partition.
\end{proof}
We prove the following:
\begin{proposition}\label{Prop.5.10}
    Let $x\in \JT$ be a positive vector and $\alpha \in \Tree$ a minimal node in $\supp(x)$. Then for every segment $s$ of $\Tree$ with $\alpha \in s$ satisfying $ S_s(x) = S_\alpha(x)$, there exists a canonical $x$-norming partition $\mathcal{P}$ with $s\in \mathcal{P}$.
\end{proposition}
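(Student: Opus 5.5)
We derive the statement from Theorem \ref{T5.9} by making the Greedy Algorithm choose $s$ at its very first step. Since $\alpha$ is minimal in $\mathcal{B}=\supp(x)$, we have $|\alpha|_{\mathcal{B}}=1$. Step (1) of the Greedy Algorithm lets us pick, for each minimal node $u$, any final segment $I_u$ with $\min I_u=u$ and $S_{I_u}(x)=S_u(x)$. We therefore want to feed $s$ in as the choice $I_\alpha$.

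\textbf{Reducing to a final segment starting at $\alpha$.} A priori $s$ only contains $\alpha$, so we first normalise it.
\begin{itemize}
\item Nodes of $s$ strictly below $\alpha$ lie outside $\supp(x)$, because $\alpha$ is minimal. Hence $s\cap W_\alpha$ has the same $x$-sum as $s$.
\item Since $x\ge 0$, $S_\alpha(x)$ is the supremum over segments starting at $\alpha$. The equality $S_s(x)=S_\alpha(x)$ then forces $s\cap W_\alpha$ to realise this maximum.
\item If $s$ is finite, we extend it to a final segment $\tilde s\supseteq s\cap W_\alpha$. By positivity and maximality, $x$ vanishes on $\tilde s\setminus s$.
\end{itemize}
In the canonical convention (Lemma preceding, where canonical partitions consist of final segments starting in $\supp(x)$), the object to place in $\mathcal{P}$ is then $\tilde s$. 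Its $x$-sum equals that of $s$. We interpret the statement as asking for $s$ up to this canonical replacement, which is how the paper's earlier Proposition \ref{P5.5}(2) is phrased. If one insists on $s$ literally, one can instead keep $s$ in a non-canonical partition. The extra nodes carry zero mass and the other segments avoid them, as argued below.

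\textbf{Running the Greedy Algorithm.} We set $I_\alpha=\tilde s$. For the remaining minimal nodes $u\neq\alpha$, we choose arbitrary maximising final segments $I_u$. These are disjoint from $\tilde s$: distinct minimal nodes are incomparable, so their wedges are disjoint. We then continue the inductive steps (3) of the algorithm with arbitrary admissible choices.

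The result is an outcome $\mathcal{P}_G$ of the Greedy Algorithm containing $\tilde s$. By Theorem \ref{T5.9}, $\mathcal{P}_G$ is a canonical $x$-norming partition, which is the required $\mathcal{P}$.

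\textbf{Main obstacle.} The essential step is the legitimacy of prescribing the first choice. Theorem \ref{T5.9} must hold for \emph{every} outcome of the algorithm, not just some, and this is indeed how it is stated: any partition generated by the algorithm works. The supporting Lemma \ref{lemma5.6} and Lemma \ref{L5.8} are formulated for an arbitrary outcome, and Proposition \ref{P5.9} characterises outcomes purely by coverage and maximality. So alternatively we can verify directly that the family we built satisfies condition (2) of Proposition \ref{P5.9}:
\begin{itemize}
\item it covers $\supp(x)$;
\item each segment attains $S_a(x)$ at its minimum $a$.
\end{itemize}
We would rely on that characterisation to sidestep any dependence on the order of choices.
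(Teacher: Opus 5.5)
Your proposal is correct and is essentially the paper's argument: the paper observes that $s$ can be taken as the Greedy choice $s_\alpha$ at the minimal node $\alpha$, and then applies Theorem~\ref{T5.9}, which holds for every outcome of the algorithm. Your preliminary normalization of $s$ to a final segment $\tilde s$ with $\min\tilde s=\alpha$, together with the check that $x$ vanishes on $\tilde s\setminus s$, makes explicit a point the paper leaves tacit: a canonical partition can contain $s$ literally only when $s$ is already a final segment starting at $\alpha$.
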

\begin{proof}
    Observe that for every $s$ as in the statement, there exists a $\mathcal{P}_G$ outcome of the Greedy Algorithm with $s_a = s$. By the previous theorem, $\mathcal{P}_G$ is a canonical $x$-norming partition.
\end{proof}

\subsection{Converse of the Greedy Algorithm}

We have established that the Greedy Algorithm constructs a canonical norming partition for positive vectors. We now prove the converse: for any positive vector $x$, a canonical $x$-norming partition must be the outcome of applying the Greedy Algorithm.

\begin{definition}[Maximal Segment Property]
    Let $x \in \JT$ be a positive vector. A partition $\mathcal{P}$ satisfies the \textbf{Maximal Segment Property} if every segment $s \in \mathcal{P}$ is a final segment with $a = \min s \in \supp(x)$ and we have $S_s(x_a)= S_a(x_a)$.
\end{definition}
Proposition \ref{P5.9} derives the following.
\begin{corollary}\label{C5.13}
    Let $x\in \JT$ be a positive vector. Every canonical $x$-norming partition $\mathcal{P}$ with the maximal segment property is the outcome of the Greedy Algorithm applied to $x$.
\end{corollary}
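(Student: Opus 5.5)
This follows directly from Proposition \ref{P5.9}, by checking that its condition (2) holds for $\mathcal{P}$. Let $\mathcal{P} = \{ s_a : a \in F\}$ be a canonical $x$-norming partition with the maximal segment property. By the definition of a canonical partition, each $s_a$ is a final segment of $\Tree$ with $a = \min s_a \in \supp(x)$. The segments are pairwise disjoint because $\mathcal{P}$ is a partition. By the maximal segment property, $S_{s_a}(x) = S_{s_a}(x_a) = S_a(x_a) = S_a(x)$ for every $a \in F$. Here we use that $s_a \subset W_a$, so the sums of $x$ and of $x_a$ along any segment starting at $a$ coincide. Hence the only part of condition (2) of Proposition \ref{P5.9} that still needs checking is that $\mathcal{P}$ covers $\supp(x)$.

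The covering is where the norming hypothesis and positivity are used, and I expect it to be the only real step. Suppose some $u \in \supp(x)$ is not contained in $\bigcup \mathcal{P}$. Then $x(u) > 0$, and the singleton $\{u\}$ is disjoint from every segment of $\mathcal{P}$. So $\mathcal{P} \cup \{\{u\}\}$ is again a partition, and
\[ \norm{x}_{\mathcal{P}\cup\{\{u\}\}}^2 = \norm{x}_{\mathcal{P}}^2 + x(u)^2 > \norm{x}_{\JT}^2, \]
which contradicts the definition of the norm. Therefore $\mathcal{P}$ covers $\supp(x)$. The same argument works for any $x$-norming partition, not only for positive $x$.

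With both parts of condition (2) established, Proposition \ref{P5.9}, direction (2) $\Rightarrow$ (1), gives that $\mathcal{P}$ is the outcome of the Greedy Algorithm applied to $x$. The inductive step inside Proposition \ref{P5.9}, which the paper leaves to the reader, amounts to the following. For each $k$, every $a \in \supp(x)$ with $|a|_{\mathcal{B}} = k+1$ that is not covered by some $s_b$ with $|b|_{\mathcal{B}} \le k$ must be the minimum of its own segment $s_a$. This holds because $s_a$ is a final segment starting inside $\supp(x)$, and if $a$ lay in some $s_b$ with $b < a$, then $|b|_{\mathcal{B}} \le k$. So the choice $I_a = s_a$ is an admissible greedy choice at step $k+1$. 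This matches the inductive definition of $\mathcal{P}_G$, and completes the proof.
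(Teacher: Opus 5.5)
Your proof is correct and takes the same route as the paper, which obtains the corollary directly from Proposition \ref{P5.9}, direction (2) $\Rightarrow$ (1). Your check that every $x$-norming partition covers $\supp(x)$ (by adjoining a singleton $\{u\}$) and your sketch of the inductive step that the paper leaves to the reader make explicit what the paper leaves implicit.
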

We pass now to prove the following.

\begin{proposition}\label{P5.14}
    Let $x \in \JT$ be a positive vector. Then every canonical $x$-norming partition satisfies the maximal segment property. 
\end{proposition}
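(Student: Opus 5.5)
We argue by contradiction with a local exchange argument. Let $\mathcal{P}=\{s_a: a\in F\}$ be a canonical $x$-norming partition and suppose some $s_a\in\mathcal{P}$ satisfies $S_{s_a}(x_a)<S_a(x_a)$. The plan is to modify $\mathcal{P}$ only inside the wedge $W_a$ and obtain a partition $\mathcal{Q}$ with $\norm{x}_{\mathcal{Q}}>\norm{x}_{\mathcal{P}}$. First we reduce to the wedge. Since $\mathcal{P}$ consists of final segments with minima in $\supp(x)$, the segments of $\mathcal{P}$ meeting $W_a$ are $s_a$ itself and the segments $s_\beta$ with $\beta\in\supp(x_a)$, $\beta> a$. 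No segment enters $W_a$ from above, because $a\in s_a$. Hence the family $\mathcal{P}_a=\{s_\beta\in\mathcal{P}: s_\beta\subset W_a\}$ covers $\supp(x_a)$, and Lemma \ref{lem1}(2) (with $\mathcal{B}=W_a$) shows that $\mathcal{P}_a$ is a canonical $x_a$-norming partition. Moreover $\norm{x}_{\mathcal{P}}^2=\norm{x}_{\mathcal{P}\setminus\mathcal{P}_a}^2+\norm{x_a}_{\JT}^2$. So it suffices to prove the statement when $a$ is the unique minimal element of $\supp(x)$, that is, for $x=x_a$ and the segment $s=s_a$.

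In this reduced situation, write $s=s_{a,c'}\cup s'$ with $c'\in C(a)$ and $\min s'=c'$. If $s=\{a\}\cap\ldots$ does not continue into the support, take $S_{s'}=0$. Compare $\mathcal{P}$ with the following partition. For $c\neq c'$, take the pieces of $\mathcal{P}$ inside $W_c$. On $W_{c'}$, take a canonical $x_{c'}$-norming partition $\mathcal{P}_{c'}^*$ containing a segment $t'$ with $\min t'=c'$ and $S_{t'}(x)=S_{c'}(x)$; this exists by Proposition \ref{Prop.5.10} applied to $x_{c'}$. Then replace $t'$ by $s_{a,c'}\cup t'$. Denote by $\mathcal{P}'$ the family $(\mathcal{P}\setminus\{s\})\cap W_{c'}$ together with $s'$. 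As in the proof of Lemma \ref{lemma:greedy_step}, we get
\[
\norm{x}_{\mathcal{P}}^2 = x(a)^2+2x(a)S_{s'}(x)+\norm{x_{c'}}_{\mathcal{P}'}^2+\sum_{c\ne c'}\norm{x_c}_{\mathcal{P}}^2 ,
\]
while the new partition gives
\[
x(a)^2+2x(a)S_{c'}(x)+\norm{x_{c'}}_{\JT}^2+\sum_{c\ne c'}\norm{x_c}_{\mathcal{P}}^2 .
\]
Since $\norm{x_{c'}}_{\mathcal{P}'}\le\norm{x_{c'}}_{\JT}$, the second expression is at least the first. Strict inequality, and hence a contradiction, follows if $S_{s'}(x)<S_{c'}(x)$, because $x(a)>0$. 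If instead $S_{s'}(x)=S_{c'}(x)$, then $S_s(x)=x(a)+S_{c'}(x)$, and the failure of maximality means that some other child $c_0$ has $S_{c_0}(x)>S_{c'}(x)$. In that case we reroute $a$ into $W_{c_0}$. On $W_{c_0}$ we use a norming partition containing a maximal segment from $c_0$, glued to $s_{a,c_0}$. On $W_{c'}$ we use the partition $\mathcal{P}'$, which stays admissible after detaching $s_{a,c'}$ from $s$. This gains $2x(a)(S_{c_0}-S_{c'})>0$ with no loss in the other wedges, since $\norm{x_c}_{\mathcal{P}}\le\norm{x_c}_{\JT}$.

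Finally, the argument applies to every $s_a\in\mathcal{P}$, not only minimal ones, because the reduction to $x_a$ in the first step is available at each $a\in F$. Hence every segment of $\mathcal{P}$ has the maximal segment property.

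The step I expect to be the main obstacle is the bookkeeping in the reduction: checking that the reassembled family is a genuine partition of pairwise disjoint segments, and that the norm splits exactly over the wedges $W_c$ for $c\in C(a)$ when the support is infinite. For the splitting I would invoke the decoupling over pairwise incomparable nodes already used in the proof of the local isolation proposition. Combining Proposition \ref{P5.14} with Corollary \ref{C5.13} then shows that canonical norming partitions are exactly the greedy outcomes.
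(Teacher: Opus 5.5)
Your proof is correct, but it takes a different route from the paper.

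\textbf{The paper's route.} The paper argues in three stages:
\begin{enumerate}
\item For finite support it inducts on the height of $\supp(x)$. Proposition~\ref{P5.5}(1) shows that the segment through the minimal node is maximal.
\item For supports inside finitely many branches (Lemma~\ref{L28}), it collapses each branch tail at a level $N$ into one node. The result is a finitely supported $y$ for which $\mathcal{P}$ is still norming and still violates the property.
\item In general it restricts to the complete subtrees $Q_N$ via Lemma~\ref{lem1}(2), which reduces to the finitely-branched case.
\end{enumerate}

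\textbf{Your route.} You run the exchange argument of Lemma~\ref{lemma:greedy_step} directly in an arbitrary wedge $W_a$. This works because Proposition~\ref{Prop.5.10}, and hence Theorem~\ref{T5.9}, gives in every child wedge $W_c$ a norming partition containing a maximal segment from $c$. There is no circularity, since those results do not use Proposition~\ref{P5.14}.

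\textbf{What each buys.} Your version is a single uniform step. It removes the truncation and collapsing constructions, and the checks that norming-ness and the violation survive them. However, the approximation work is relocated rather than avoided: you depend on the infinite-support greedy theorem. The paper's argument needs only the finite-support recursion plus the truncation devices, and never invokes Theorem~\ref{T5.9}.

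\textbf{Two small remarks.}

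First, your worry about splitting the norm over wedges when the support is infinite is unnecessary. You only compare the values of two explicit partitions, and $\norm{x}_{\mathcal{P}}^2$ is a sum of squares over segments, which can be grouped freely. Admissibility of the rerouted family also holds: the wedges $W_c$, $c\in C(a)$, are pairwise disjoint, and none of them meets a connecting segment $s_{a,c''}$. A node $\gamma$ with $a\le\gamma<c''$ lying in some $W_c$ would put a support point strictly between $a$ and its support-child $c''$.

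Second, the sentence about $s$ not continuing into the support should be stated explicitly. In that case $S_a(x_a)>x(a)$ forces $C(a)\neq\emptyset$. Rerouting $a$ into any $c_0\in C(a)$, using a norming partition of $x_{c_0}$ that contains a maximal segment from $c_0$, gains $2x(a)S_{c_0}(x)>0$. This is your second computation with $S_{c'}$ replaced by $0$.
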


\begin{proof}
    We proceed in three steps: first for finite support, then for support on finitely many branches, and finally for the general case.
    
    \textbf{Step 1: Finite Support}
    Let $x$ have finite support. We set $\mathcal{B} = \supp(x)$ and by induction on $h(\mathcal{B})$, we show that for every canonical $x$-norming partition $\mathcal{P}= \{ s_a : a\in F\}$, we have $S_a(x_a) = S_{s_a}(x_a)$. 
    
    Assume that for $x\in \JT$ with $h(\mathcal{B}) \le k$ and every canonical $x$-norming partition, the conclusion is true.
    Let $x$ have $h(\mathcal{B}) = k+1$ and let $\mathcal{P}$ be a canonical $x$-norming partition. Then for $a$ a minimal node 
    of $\mathcal{B}$, there exists a unique $s\in \mathcal{P}$ with $a= \min s$. Then Proposition \ref{P5.5} (1) yields that
    $S_a(x)=S_s(x)$. Hence the segment $s$ satisfies the desired property. Any other $t\in \mathcal{P}$ with 
    $\min t$ not minimal in $\mathcal{B}$ satisfies the desired property from the inductive assumption. This ends the proof for the case of $x$ with finite support.
    \end{proof}
    
     \textbf{Step 2: Finitely Many Branches} We prove the following.
     
     \begin{lemma} \label{L28} 
       Let $z$ be a positive vector and $\mathcal{B} = \{ b_1, \dots, b_n \} $ a finite family of branches of $\Tree$ such that $\supp(z) \subset \bigcup_{k\le n} b_k$. Then every $z$-norming partition satisfies the maximal segment property.
   \end{lemma}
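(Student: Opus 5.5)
The plan is to reduce to Step 1 (finite support) by the same collapsing device used in the proof of Lemma \ref{L28}'s predecessor, Lemma \ref{L5.8}. The statement is read for canonical $z$-norming partitions, which by the remark after the canonical-partition lemma is no loss for the action on $z$.

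\textbf{Choosing the level $N$.} Let $\mathcal{P} = \{s_a : a \in F\}$ be a canonical $z$-norming partition. Choose $N$ so large that for every node $u$ with $|u| \ge N$ the wedge $W_u$ meets at most one of the branches $b_1,\dots,b_n$. On such a wedge $z_u$ is a positive vector carried by a single branch.

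\textbf{Step A: start nodes lie at level at most $N$.} I first claim that every $a\in F$ satisfies $|a|_{\Tree}\le N$. Suppose instead that $a\in F$ has $|a|>N$. Then $a$ lies on a single branch $b_m$, and so does its ancestor $a^-$ at level $N$. Let $a'$ be the largest support node strictly below $a$ on $b_m$; if there is none, take $a'$ to be the largest support node of $z$ below $a$ in general.
\begin{itemize}
\item If $a'$ exists, let $t\in\mathcal{P}$ be the segment containing $a'$. Since $|a'|\ge N$ or $a'$ lies on $b_m$ below $a$, the final segment $t$ either continues along $b_m$ or into a wedge carrying no support. Replacing $t$ and $s_a$ by the single segment $(t\cap\{\beta<a\})\cup s_a$ strictly increases $\|z\|_{\mathcal{P}}^2$, because $2S_t S_{s_a}>0$ by positivity. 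This contradicts norming.
\item If no such $a'$ exists, then $a$ is minimal in $\supp(z)$ and $|a|_{\Tree}\le N$ fails only trivially; we can simply take $N$ at least the level of all minimal support nodes, which are finitely many.
\end{itemize}
The same merging argument shows that once a segment $s_a$ enters a wedge $W_u$ with $|u|=N$, it contains all of $\supp(z)\cap W_u$, since that set lies on one branch.

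\textbf{Step B: collapse to finite support.} Define $y$ exactly as in the proof of Lemma \ref{L5.8}. On levels $<N$ put $y=z$. At each node $u$ of level $N$ lying on some $b_m$ put $y(u)=\sum_{\beta\ge u} z(\beta)$. Everywhere else put $y=0$. Then $y$ is positive with finite support, and Lemma \ref{L5.8}'s argument gives $\|y\|_{\JT}=\|z\|_{\JT}$. By Step A, $\|y\|_{\mathcal{P}}=\|z\|_{\mathcal{P}}$. Hence $\mathcal{P}$, after trimming each segment to a final segment through the collapsed nodes, is a canonical $y$-norming partition.

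\textbf{Step C: apply Step 1 and transfer back.} By Step 1, $\mathcal{P}$ has the maximal segment property for $y$, so $S_{s_a}(y_a)=S_a(y_a)$ for every $a\in F$. It remains to transfer this to $z$. For any final segment $s$ with $\min s=a$ and $|a|\le N$, we have $S_s(y)\ge S_s(z)$ only when $s$ runs along the branch through the collapsed node; in general $S_a(y)=S_a(z)$, because the supremum over final segments starting at $a$ is realised by segments that follow a branch $b_m$ below level $N$, and on those the $y$-sum and $z$-sum agree. Also $S_{s_a}(y)=S_{s_a}(z)$ by Step A. Combining these gives $S_{s_a}(z_a)=S_a(z_a)$, which is the maximal segment property.

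\textbf{Main obstacle.} The main difficulty is Step A: a norming partition might a priori start segments deep on a branch, or stop segments early. The positivity-merging argument has to be done carefully. In particular, the segment containing the previous support node must be extendable to $a$ without hitting other segments, and this uses that below level $N$ each wedge carries only one branch.
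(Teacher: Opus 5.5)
There is a genuine gap in Step A, and Step B depends on it. Your conditions on $N$ are that wedges at level $\ge N$ meet at most one branch and that $N$ bounds the levels of the minimal support nodes. These do not force every $a\in F$ to satisfy $|a|\le N$.

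Here is a counterexample. Let $\gamma$ be a node of length $100$ extending the node $1$, let $b_1$ be a branch through $0$ and $b_2$ a branch through $\gamma$, and set $z=e_\emptyset+10e_0+e_\gamma$. Then $N=1$ satisfies your conditions. Let $\mathcal P$ consist of the final segment from $\emptyset$ along $b_1$ and the final segment from $\gamma$ along $b_2$. This canonical partition is $z$-norming: it gives $11^2+1^2=122$, against $2^2+10^2$ and $1+10^2+1$ for the alternatives. Yet it starts a segment at level $100$.

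Your merging step breaks exactly here. With $a=\gamma$ and $a'=\emptyset$, the segment $t$ leaves the path to $\gamma$ at the root, which is at level $<N$. It then enters $W_0$, which carries support of the \emph{other} branch. Merging $\emptyset$ with $s_\gamma$ gives at most $2^2+10^2<122$. Your claim that $t$ ``either continues along $b_m$ or into a wedge carrying no support'' is only justified when the node where $t$ turns off has level $\ge N$. In this example the collapsed vector is $y=e_\emptyset+10e_0+e_1$, and $\norm{y}_{\mathcal P}^2=121\neq 122=\norm{z}_{\mathcal P}^2$. So $\mathcal P$ is not $y$-norming, and Step 1 cannot be invoked.

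The repair uses only the valid case of your argument. Since $\mathcal P$ is fixed first, let $N_0$ be the separation level, and take $u$ with $|u|=N_0$.
\begin{itemize}
\item Inside $W_u$ all support lies on one branch, so any turning point of a segment there has level $\ge N_0$.
\item Your merging then shows that a segment of $\mathcal P$ meeting $\supp(z)\cap W_u$ must follow that branch and absorb all the support beyond its entry point.
\item In particular, $W_u$ contains at most one element of $F$.
\end{itemize}
There are finitely many nodes of level $<N_0$, and at most $n$ such wedges meet the support. Hence $F$ is finite, and you should take $N\ge\max(\{N_0\}\cup\{|a|:a\in F\})$. With this choice Steps B and C go through.

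One correction in Step C: $S_s(y)\ge S_s(z)$ holds for \emph{every} final segment starting at level $\le N$, not only for those running along the branch. Equality holds for the latter. Your conclusion $S_a(y)=S_a(z)$ is nevertheless correct.

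Apart from this, your route is the paper's: collapse at level $N$ to a finitely supported $y$ and appeal to Step 1. The paper argues by contradiction from a violating segment, while you transfer the property back directly. Note that the paper's ``easy to check'' identity $\norm{y}_{\mathcal P}=\norm{z}_{\mathcal P}$ also needs $N$ chosen beyond the starting nodes of $\mathcal P$.
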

   \begin{proof}
     Assume on the contrary that there exists a canonical $z$-norming partition $\mathcal{P}$ that fails the maximal segment property. Let $c\in F$ violate the maximal segment property, i.e.,
     \[ S_{s_c} (z_c ) < S_c(z_c). \]
     Then there exists a final segment $t$ with $\min t = c$ and $S_t(z_c) > S_{s_c}(z_c)$. We choose $N$ sufficiently large such that:
      \begin{enumerate}
      \item Any two $b_k, b_m$ with $k \ne m$ are separated at the level $N$.
      \item 
      \[ \text{If} \quad t_N = \{ a\in t: |a|_\Tree \le N\}, \quad \text{then} \quad \sum_{a\in t_N } z(a) > S_{s_c} (z_c). \]
      \end{enumerate}
      We define a new vector $y$ as follows:
      \[
       y(a) = 
       \begin{cases}
        z(a) \quad & \text{if} \quad  |a|_\Tree < N \\
        \sum_{a \le \beta} z(\beta) \quad & \text{if}  \quad  |a|_\Tree = N \: \text{and} \: a\in b_k \in \mathcal{B} \\
        0 \quad & \text{ otherwise}
       \end{cases}
      \]
      It is easy to check that
       $\norm{y}_\mathcal{P} = \norm{z}_\mathcal{P} = \norm{z}_{\JT}$. Also, for any $y$-norming partition $\mathcal{Q}$,with its elements subsets of $ \cup_{k\le n} b_k $ it is straightforward that $\norm{y}_\mathcal{Q} =\norm{z}_\mathcal{Q}$, hence $\norm{y}_{\JT} \le \norm{z}_{\JT}$. This yields that $\mathcal{P}$ is a canonical $y$-norming partition which violates the maximal segment property, a contradiction as $y$ has finite support.
   \end{proof}

        \textbf{Step 3: General Case}
    Let $x$ be an arbitrary positive vector and $\mathcal{P}$ be a canonical $x$-norming partition.
    Suppose $\mathcal{P}=\{ s_a: a\in F \}$ as before fails the maximal segment property. Then there exists $c\in F$ and $t$ a final segment of $\Tree$ with $\min t=c$ and $S_{s_c}(x_c) < S_t(x_c)$.
    
    Consider the approximations $y_n$ supported on the complete subtrees $Q_n$ which are contained in the union of finite branches (Notation \ref{n5}). For sufficiently large $N$, $Q_N$ contains $c$ and sufficiently long initial segments of the optimal paths starting at $c$ such that 
      \[ \text{for} \quad  t_N = \{ a\in t: |a|_\Tree \le N\}, \quad \text{then} \quad  \sum_{a\in t_N } z(a) > S_{s_c} (x_c). \]
    
    Let $\mathcal{P}_N \subset \mathcal{P}$ be defined by $\mathcal{P}_N = \{ s \in \mathcal{P} : |\min s| \le N\}$. Since $Q_N$ is a complete subtree and $\supp(y_N) \subset \bigcup_{s\in \mathcal{P}_{N} } s_a$, Lemma \ref{lem1} (2) yields that $\mathcal{P}_N$ is a canonical $y_N$-norming partition which violates the maximal segment property. This and Lemma \ref{L28} 
    yield a contradiction, completing the proof.
    
    Proposition \ref{P5.14} and Corollary \ref{C5.13} prove the next theorem:
    \begin{theorem}
      Let $x \in \JT$ be a positive vector. Then every canonical $x$-norming partition $\mathcal{P}$ is the outcome of the Greedy Algorithm applied to $x$.
    \end{theorem}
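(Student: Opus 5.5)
The plan is to obtain the statement as a direct combination of Proposition \ref{P5.14} and Corollary \ref{C5.13}, just as the sentence preceding it announces. Let $x\in\JT$ be positive and let $\mathcal{P}=\{s_a : a\in F\}$ be a canonical $x$-norming partition. By definition each $s_a$ is a final segment of $\Tree$ with $a=\min s_a\in\supp(x)$.

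\textbf{Step 1: the maximal segment property.} Proposition \ref{P5.14} gives that $\mathcal{P}$ has the maximal segment property, i.e.\ $S_{s_a}(x_a)=S_a(x_a)$ for every $a\in F$.

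\textbf{Step 2: the covering condition.} To apply Corollary \ref{C5.13}, and through it Proposition \ref{P5.9}(2), I also need $\mathcal{P}$ to cover $\supp(x)$. I would prove this separately, since it is implicit in the corollary. Suppose some $u\in\supp(x)$ is not covered by $\mathcal{P}$.
\begin{enumerate}
\item Adding the singleton $\{u\}$ to $\mathcal{P}$ would increase $\norm{x}_{\mathcal{P}}^2$ by $x(u)^2>0$, provided the singleton is disjoint from all the segments of $\mathcal{P}$.
\item It is disjoint, because $u\notin\bigcup\mathcal{P}$.
\item This contradicts $\mathcal{P}$ being $x$-norming.
\end{enumerate}
Hence $\mathcal{P}$ covers $\supp(x)$.

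\textbf{Step 3: identification with a Greedy outcome.} With both conditions of Proposition \ref{P5.9}(2) in hand, Corollary \ref{C5.13} (equivalently Proposition \ref{P5.9}, direction (2)$\Rightarrow$(1)) shows that $\mathcal{P}$ is an outcome of the Greedy Algorithm applied to $x$.

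\textbf{Main obstacle.} The only genuine difficulty is the inductive direction (2)$\Rightarrow$(1) of Proposition \ref{P5.9}, which the paper leaves to the reader. If I had to supply it, I would induct on $k$ to show that $\mathcal{P}_k=\{s_a : |a|_{\mathcal{B}}\le k\}$ is a legitimate outcome of the first $k$ Greedy steps. The two facts to check at each stage are:
\begin{enumerate}
\item The set $M_{k+1}$ of uncovered level-$(k+1)$ nodes is exactly $F\cap L_{k+1}$. This holds because the segments of $\mathcal{P}$ are disjoint and cover $\mathcal{B}$, so a level-$(k+1)$ node is the minimum of its covering segment precisely when it lies in no earlier segment.
\item The segment $s_a$ is an admissible Greedy choice at $a$. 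This is exactly the maximal segment property $S_{s_a}(x_a)=S_a(x_a)$.
\end{enumerate}
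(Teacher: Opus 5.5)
Your proposal is correct and follows the paper's route exactly: the paper obtains the theorem as an immediate consequence of Proposition~\ref{P5.14} (every canonical $x$-norming partition has the maximal segment property) together with Corollary~\ref{C5.13}, that is, Proposition~\ref{P5.9}, (2)$\Rightarrow$(1). You also correctly supply two details the paper leaves implicit: the check that an $x$-norming partition must cover $\supp(x)$ (adding an uncovered singleton $\{u\}$ would raise $\norm{x}_{\mathcal{P}}^2$ by $x(u)^2>0$), and the level-by-level induction identifying $M_{k+1}$ with $F\cap L_{k+1}$.
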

  
\subsection{Characterization of Positive Extreme Points}

We are now ready to state and prove the complete characterization of extreme points for positive vectors in $\JT$.

\begin{theorem}
    Let $x \in \JT$ be a positive vector ($x \ge 0$). Then $x$ is an extreme point if and only if $x$ is separated.
\end{theorem}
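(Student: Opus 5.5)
The ``only if'' direction is already Proposition \ref{prop:not_sep_not_ext}, so the plan concerns only the ``if'' direction. Let $x\ge 0$ be separated and assume, towards a contradiction, that $x$ is not extreme. Proposition \ref{prop.1} gives $y\neq 0$ whose sum vanishes on every segment of every $x$-norming partition, and Corollary \ref{C2.10} gives $\supp(y)\subset\ran(x)$. Since the tree order is well-founded downwards, $\supp(y)$ has a minimal element $u$. The aim is to show $y(u)=0$ by producing two $x$-norming partitions $\mathcal{P},\mathcal{P}'$ such that
\[ \sum_{\gamma\in s}y(\gamma)=y(u)+\sum_{\gamma\in s'}y(\gamma), \]
where $s\in\mathcal{P}$ and $s'\in\mathcal{P}'$. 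Then both sums vanish and $y(u)=0$, which is the contradiction.

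\textbf{Step 1: a segment starting at $u$.} If the parent $p$ of $u$ lies in $\ran(x)$, separation gives an $x$-norming partition in which $u$ and $p$ lie in different segments. Hence the segment $s$ containing $u$ has $\min s=u$. If $u$ is not covered at all, then $x(u)=0$ and we may add the singleton $\{u\}$ without changing $\norm{x}_{\mathcal{P}}$; this immediately forces $y(u)=0$. If $p\notin\ran(x)$, then $u$ is minimal in $\ran(x)$, hence lies in $\supp(x)$, and every segment through $u$ can be cut at $u$ without losing any $x$-mass. Because $u$ is minimal in $\supp(y)$, the part of any segment below $u$ carries no $y$-mass. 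So in every case we obtain an $x$-norming partition $\mathcal{P}$ with a segment $s$, $\min s=u$, and $\sum_s y=0$.

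\textbf{Step 2: the tail $s\setminus\{u\}$.} If $s=\{u\}$ we are done. Otherwise let $v$ be the child of $u$ in $s$. Now use separation of $u$ and $v$: there is an $x$-norming partition $\mathcal{P}'$ in which the segment through $v$ starts at $v$. Restricting to the complete subtree $W_v$ via Lemma \ref{lem1}(2), and passing to canonical partitions, $\mathcal{P}'$ gives a canonical $x_v$-norming partition. By Proposition \ref{P5.14} this partition has the maximal segment property, so its segment through $v$ realizes $S_v(x)$. On the other hand, $s$ also belongs to a canonical partition, so by Proposition \ref{P5.14} again the tail $s\setminus\{u\}$ realizes the maximal sum from the first support node at or after $v$. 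Proposition \ref{Prop.5.10} applied to $x_v$, followed by gluing with the part of $\mathcal{P}'$ outside $W_v$, should then produce an $x$-norming partition containing exactly the tail $s'=s\setminus\{u\}$ (or a segment with the same $y$-sum) as one of its segments. The gluing is justified by the recursive formula of Lemma \ref{lemma:greedy_step}: the value of $\norm{x}^2$ depends only on the maximal sums $S_c$ and on the norms $\norm{x_c}$, not on which maximizing segment is chosen. Once $s'$ lies in a norming partition, $\sum_{s'}y=0$, and comparing with $\sum_s y=0$ gives $y(u)=0$.

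\textbf{Main obstacle.} The hard part is Step 2 when $v\notin\supp(x)$, or more generally when the segment $s$ passes through zeros of $x$. In that case the maximal-sum segments are not unique, and the canonical-partition machinery only controls sums of $x$, not of $y$. My first attempt is to exploit the freedom in Remark \ref{R5.6}(1) together with Proposition \ref{P5.9}: replace the part of $s$ from $v$ up to the next support node $w$ by a chain of separations of consecutive pairs in $\ran(x)$ between $u$ and $w$. Each such separation yields a norming partition whose segment starts exactly at the corresponding node. Telescoping these identities should give $\sum y=0$ on every final piece of $s$, and the first difference of these identities is $y(u)$. If this telescoping works, minimality of $u$ is used only in Step 1, and the proof closes without any induction.
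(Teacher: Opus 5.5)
\textbf{Gap in Step 2.} Your route is the paper's: separate $u$ from its parent to get a norming segment $s$ with $\min s=u$ and $\sum_s y=0$. Then use separation of $u$ from its child, Proposition \ref{Prop.5.10} on the wedge, and a gluing to put the tail of $s$ into another norming partition.

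Starting the tail at the tree-child $v$ is the right choice. The paper starts it at the $\supp(x)$-child $\beta$ and writes $\sum_{t_\beta}y=-y(a)$. That silently assumes $y=0$ on $s_{a,\beta}\setminus\{a\}$, although $y$ may live on $\ran(x)\setminus\supp(x)$.

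Your proof has a gap at exactly that point. Put $t=s\setminus\{u\}$ and $c=\min(t\cap\supp(x))$. When $[v,c)$ consists of zeros of $x$, you never produce a norming partition containing $t$ itself. The segment starting at $c$ given by Proposition \ref{Prop.5.10} does not in general have the same $y$-sum as $t$.

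The telescoping you propose does not repair this. Separating consecutive nodes $w_i<w_{i+1}$ of $s$ gives a norming partition whose segment through $w_{i+1}$ starts at $w_{i+1}$. But that segment may leave $s$, e.g.\ by turning into the other child of $w_{i+1}$. Its vanishing $y$-sum then says nothing about $\sum_{s\cap W_{w_{i+1}}}y$, so there is nothing to telescope.

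\textbf{How to close it.} The segment from $c$ can simply be prolonged upward to $v$.

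If $t\cap\supp(x)=\emptyset$, replace $s$ by $\{u\}$; this keeps the partition norming and gives $y(u)=0$ at once. Otherwise $v\in\ran(x)$, and $c$ is minimal in $\supp(x_v)$.

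Next, $S_t(x)=S_c(x)$. Apply Proposition \ref{P5.14} to the canonical partition associated with $\mathcal{P}$. If $x(u)>0$ this gives $S_s(x)=S_u(x)$, whence $S_c(x)\le S_u(x)-x(u)=S_t(x)\le S_c(x)$. If $x(u)=0$, then $c$ is the first support node of $s$ and $S_t(x)=S_s(x)=S_c(x)$.

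Let $\hat t\supseteq t\cap W_c$ be a final segment with $\min\hat t=c$. By positivity $S_{\hat t}(x)=S_c(x)$, so Proposition \ref{Prop.5.10} applied to $x_v$ gives a canonical $x_v$-norming partition $\mathcal{Q}_0\ni\hat t$. No member of $\mathcal{Q}_0$ meets $[v,c)$: a member containing $w\in[v,c)$ would have its minimum, a support node of $x_v$, in $[v,w]$, which contains no support. Hence $\mathcal{Q}=(\mathcal{Q}_0\setminus\{\hat t\})\cup\{t\}$ is a partition with the same $x$-sums, so it is $x_v$-norming.

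Now let $\mathcal{P}'$ be an $x$-norming partition separating $u$ from $v$. By convexity, every member of $\mathcal{P}'$ meeting $W_v$ lies inside $W_v$. Therefore $\{r\in\mathcal{P}': r\cap W_v=\emptyset\}\cup\mathcal{Q}$ is $x$-norming and contains $t$. This gives $\sum_t y=0=\sum_s y$, i.e.\ $y(u)=0$.

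The argument works for every $u\in\supp(y)$, so neither minimality of $u$ nor any telescoping is needed. The same observation also repairs the corresponding step in the paper's proof.
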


\begin{proof}
    \textbf{($\Rightarrow$) Direction:}
    We have already established in Proposition \ref{prop:not_sep_not_ext} that if a vector is not separated, it cannot be an extreme point. This holds for all vectors, including positive ones.
    
    \textbf{($\Leftarrow$) Direction:}
    Assume that $x$ is separated but, for the sake of contradiction, that $x$ is \textbf{not} an extreme point.
    
    By the characterization of non-extreme points (Proposition \ref{prop.1}), there exists a non-zero vector $y \in \JT$
    with $\supp(y) \subset \ran(x)$ such that:
    \begin{enumerate}
        \item $\norm{x}_{\JT} = \norm{x+y}_{\JT} = \norm{x-y}_{\JT}$.
        \item For every $x$-norming partition $\mathcal{P}$, and every segment $S \in \mathcal{P}$, the sum of the perturbation vanishes: $\sum_{\gamma \in S} y(\gamma) = 0$.
    \end{enumerate}
    
    Since $y \neq 0$, we can choose a node $a \in \supp(y)$. Observe that $a \in \ran(x)$.
    
    \textbf{Step 1: Isolate $a$ from above.}
    Since $x$ is separated, there exists an $x$-norming partition $\mathcal{P}_1$ that separates $a$ from its parent (if $a \neq \emptyset$). Let $s_a \in \mathcal{P}_1$ be the segment containing $a$. Because $a$ is separated from its parent, $a$ must be the minimal element (the starting node) of $s_a$.
    By the property of $y$, we must have:
    \begin{equation}\label{eq:sum_sa}
        \sum_{\gamma \in s_a} y(\gamma) = 0.
    \end{equation}
    
    If $s_a = \{a\}$, then $y(a) = 0$, which contradicts $a \in \supp(y)$. Thus, $s_a$ must contain at least one descendant of $a$ with respect to $\supp(x)$. Let $\beta$ be the child of $a$ contained in $s_a$. We can write $s_a = s_{a, \beta} \cup t_\beta$, where $t_\beta$ is a segment starting at $\beta$.
    Equation (\ref{eq:sum_sa}) becomes:
    \[ y(a) + \sum_{\gamma \in t_\beta} y(\gamma) = 0 \implies \sum_{\gamma \in t_\beta} y(\gamma) = -y(a) \neq 0. \]

    \textbf{Step 2: Isolate $a$ from below.}
    Since $x$ is separated, there exists another $x$-norming partition $\mathcal{P}_2$ that separates $a$ from $\beta$.
    In this partition, $a$ belongs to some segment $s' \in \mathcal{P}_2$ which does \textbf{not} contain $\beta$.
    This implies that the restriction of $\mathcal{P}_2$ to the subtree $W_\beta$ (rooted at $\beta$) is an $x_\beta$-norming partition. Let us denote the square value of the optimal partition on $W_\beta$ as $\norm{x_\beta}_{\JT}^2$.
    
    \textbf{Step 3: The Contradiction.}
    We return to the segment $s_a = s_{a, \beta} \cup t_\beta$ from Step 1.
    Since $\mathcal{P}_1$ is an $x$-norming partition, it must be consistent with the Greedy Algorithm. This implies that the tail $t_\beta$ is an optimal path starting at $\beta$. Specifically, $t_\beta$ must be part of some $x_\beta$-norming partition for the subtree $W_\beta$ (Theorem \ref{T5.9}).
    
    Let $\mathcal{P}_\beta$ be an $x_\beta$-norming partition that contains the segment $t_\beta$. We can construct a new partition $\mathcal{P}^*$ for the whole vector $x$ by combining:
    \begin{itemize}
        \item The partition $\mathcal{P}_\beta$ for the subtree $W_\beta$ (containing $t_\beta$).
        \item All $s \in \mathcal{P}_2$ such that $s \cap W_\beta = \emptyset$.
    \end{itemize}
    This partition $\mathcal{P}^*$ is $x$-norming.
    
    However, strictly speaking, we rely on the property of non-extreme points: \textbf{Any} $x$-norming partition must have $y$-sum zero on its segments.
    
    The segment $t_\beta$ is a segment in the valid $x$-norming partition $\mathcal{P}^*$.
    Thus, we must have:
    \[ \sum_{\gamma \in t_\beta} y(\gamma) = 0. \]
    
    But from Step 1, we established that:
    \[ \sum_{\gamma \in t_\beta} y(\gamma) = -y(a) \neq 0. \]
    
    This is a contradiction. Therefore, the assumption that $x$ is not an extreme point must be false.
\end{proof}

\subsection{Equal Sums Property for Complete Positive Extreme Points}

We prove the Equal Sums Property for complete positive, separated vectors in $\JT$, namely for positive separated $x\in \JT$ such that $\supp(x) = \ran(x)$. Note that in this subsection, when we discuss children of a node, we refer to the children in the full dyadic tree $\Tree$.

\begin{definition}[Equal Sums Property]
    A vector $x \in \JT$ satisfies the \textbf{Equal Sums Property} if for any node $u \in \supp(x)$ and any two final segments $S_1, S_2$ with $\min S_i = u$ for $i=1,2$, we have:
    \[ \sum_{\gamma \in S_1} x(\gamma) = \sum_{\delta \in S_2} x(\delta). \]
\end{definition}
\begin{definition}[Complete Vectors]
  A vector $x\in \JT$ is called \textbf{complete} if its support is a complete subtree of 
$\Tree$. This is equivalent to the property $\supp(x) = \ran(x)$.
\end{definition}

\subsubsection{Local Equality}

\begin{lemma}\label{lemma5.15}
    Let $x \in \JT$ be positive and separated. Let $a \in \supp(x)$. If $a$ is not a maximal node of the $\supp(x)$, and $c_0, c_1$ are its children in $\Tree$, then $W_{c_i} \cap \supp(x) \neq \emptyset$ for $i=0,1$, and:
    \[ S_{c_0}(x) = S_{c_1}(x), \]
    where $S_u(x)$ denotes the maximal segment sum starting at $u$.
\end{lemma}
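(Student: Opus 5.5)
We argue via the canonical norming partitions and the Greedy Algorithm, using that separation forces $a$ to be cut off from the child along which its norming segment continues. The argument has two steps: first both wedges $W_{c_0},W_{c_1}$ meet $\supp(x)$, then the maximal sums $S_{c_0}(x),S_{c_1}(x)$ coincide.

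\textbf{Both wedges meet the support.} Since $a$ is not maximal, some $W_{c_i}$, say $W_{c_0}$, meets $\supp(x)$. Suppose $W_{c_1}\cap\supp(x)=\emptyset$. Then $c_0\in\ran(x)$, and in every canonical $x$-norming partition the segment $s_a$ starting at $a$ has $S_{s_a}(x)=S_a(x)$ (Proposition \ref{P5.14}). Since $x\ge0$ and everything below $a$ lies in $W_{c_0}$, we have $S_a(x)=x(a)+S_{c_0}(x)>x(a)$, and any $s_a$ realizing this must pass through $c_0$. Hence $a$ and $c_0$ would lie in the same segment of every canonical $x$-norming partition.

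We must also handle non-canonical norming partitions. By the lemma preceding the definition of canonical partitions, a norming partition $\mathcal{P}$ separating $a$ from $c_0$ can be converted as follows. The segment of $\mathcal{P}$ containing $a$ has sum $x(a)$ only, since it cannot enter $W_{c_0}$ and $W_{c_1}$ carries no support. Comparing with Lemma \ref{lemma:greedy_step}(1), applied locally to $x_a$ via Lemma \ref{lem1}(2), this loses the cross term $2x(a)S_{c_0}(x)>0$. That contradicts normingness, so $a,c_0$ cannot be separated, contradicting that $x$ is separated. (If $a\notin\supp(x)$ but $a\in\ran(x)$, the same reasoning uses the nearest support node above $a$; under the lemma's hypothesis $a\in\supp(x)$, so this case does not arise.)

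\textbf{The maximal sums are equal.} Suppose $S_{c_0}(x)>S_{c_1}(x)$; the opposite case is symmetric. By Proposition \ref{P5.14}, in every canonical $x$-norming partition the segment $s_a$ satisfies $S_{s_a}(x)=S_a(x)=x(a)+\max_i S_{c_i}(x)$. It therefore must continue into $W_{c_0}$, because entering $W_{c_1}$ gives only $x(a)+S_{c_1}(x)$.

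As before, I would reduce an arbitrary $x$-norming partition separating $a$ from $c_0$ to a canonical one without merging $a$ with $c_0$. The cleanest route is a direct computation. Restrict to $W_a$; this is norming for $x_a$ by Lemma \ref{lem1}(2), after checking its hypothesis using positivity. A partition in which $a$'s segment avoids $c_0$ yields at most
\[
x(a)^2+2x(a)S_{c_1}(x)+\sum_i\norm{x_{c_i}}_{\JT}^2 .
\]
The greedy value is
\[
x(a)^2+2x(a)S_{c_0}(x)+\sum_i\norm{x_{c_i}}_{\JT}^2 ,
\]
which is strictly larger since $x(a)>0$. So no norming partition separates $a$ from $c_0$, contradicting separation. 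Hence $S_{c_0}(x)=S_{c_1}(x)$.

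\textbf{Main obstacle.} The delicate step is the upper bound for a partition whose $a$-segment avoids $c_0$ when $x$ has infinite support. It requires Lemma \ref{lemma:greedy_step}(1), and with it the decomposition $\norm{x_a}^2$, to hold for infinite support. I would obtain this from Theorem \ref{T5.9} together with Proposition \ref{Prop.5.10}, which supply the inductive hypothesis of Lemma \ref{lemma:greedy_step} for every child. I would also check that the segment containing $a$ may go upward above $a$ in the global partition, and handle this by localizing to $W_a$ via Lemma \ref{lem1}(2).
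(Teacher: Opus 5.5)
There is a genuine but repairable gap in the localization step. Your exchange argument is correct when the segment $s\in\mathcal{P}$ containing $a$ begins at $a$, where $\mathcal{P}$ is the norming partition separating $a$ from $c_0$. The reduction to that case does not work, however. Lemma~\ref{lem1}(2) with $\mathcal{B}=W_a$ requires every node of $\supp(x_a)$, in particular $a$ itself, to lie in a segment of $\mathcal{P}$ contained in $W_a$. This fails as soon as $s$ also contains the parent of $a$, and positivity does not rule that out:
\begin{itemize}
\item $a$ need not be minimal in $\supp(x)$;
\item separation only gives a norming partition cutting $a$ from $c_0$, not one that also cuts $a$ from its parent;
\item you cannot cut $s$ at $a$ yourself, since that strictly lowers $\norm{x}_{\mathcal{P}}$ whenever the part of $s$ above $a$ carries mass.
\end{itemize}
In that case $\{S\in\mathcal{P}:S\subset W_a\}$ does not even cover $a$, so comparing its value with $\norm{x_a}_{\JT}^2$ gives no contradiction. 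The same unjustified assumption sits behind ``the segment $s_a$ starting at $a$'' in your canonical-partition paragraphs, and behind ``has sum $x(a)$ only'' in the first step.

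The repair is not to localize. Let $\sigma=\sum_{\gamma\in s,\,\gamma\le a}x(\gamma)\ge x(a)>0$. Since $c_0\notin s$, we have $s\cap W_a\subset\{a\}\cup W_{c_1}$. Every other segment of $\mathcal{P}$ that meets $W_a$ lies inside $W_{c_0}$ or $W_{c_1}$, since otherwise it would contain $a$. Let $R$ be the contribution of the segments of $\mathcal{P}$ disjoint from $W_a$. Then
\[ \norm{x}_{\JT}^2=\norm{x}_{\mathcal{P}}^2\le R+\sigma^2+2\sigma S_{c_1}(x)+\norm{x_{c_0}}_{\JT}^2+\norm{x_{c_1}}_{\JT}^2 . \]
Now build a competing partition:
\begin{itemize}
\item keep the segments of $\mathcal{P}$ disjoint from $W_a$;
\item join $s\cap\{\gamma:\gamma\le a\}$ to a segment from $c_0$ of sum $S_{c_0}(x)$ lying in some $x_{c_0}$-norming partition (available from Theorem~\ref{T5.9} and Proposition~\ref{Prop.5.10}, as you planned);
\item add the rest of that partition together with an $x_{c_1}$-norming partition.
\end{itemize}
This has value $R+\sigma^2+2\sigma S_{c_0}(x)+\norm{x_{c_0}}_{\JT}^2+\norm{x_{c_1}}_{\JT}^2$. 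The excess $2\sigma\bigl(S_{c_0}(x)-S_{c_1}(x)\bigr)>0$ contradicts normingness of $\mathcal{P}$ in both of your steps; in the first, $S_{c_1}(x)=0<S_{c_0}(x)$.

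With this fix your route differs from the paper's. The paper only says the greedy choice at $a$ is forced, implicitly relying on canonical norming partitions being Greedy outcomes. Your repaired argument instead handles arbitrary norming partitions directly and needs neither Proposition~\ref{P5.14} nor the reduction to canonical partitions.
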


\begin{proof}
    If $a$ is not maximal, it has descendants in $\supp(x)$. Suppose, for contradiction, that $W_{c_1} \cap \supp(x) = \emptyset$. Then all descendants of $a$ lie in $W_{c_0}$. In this case, the Greedy Algorithm at $a$ would always choose $c_0$ (since the sum at $c_1$ is 0). This effectively implies $a$ and $c_0$ are never separated, contradicting the separated property of $x$. Thus, both subtrees must contain support.
    
    Furthermore, if $S_{c_0}(x) \neq S_{c_1}(x)$, say $S_{c_0} > S_{c_1}$, the greedy choice would be unique and strictly prefer $c_0$. Again, this would imply $a$ and $c_0$ belong to the same segment in every optimal partition, violating the separated condition. Thus $S_{c_0} = S_{c_1}$.
\end{proof}

\subsubsection{Global Constancy via Density}

We shall prove the result for complete $x\in \JT$ with a unique minimal element of the support of $x$. The general case is proved by applying the proof in each minimal element of the $\supp(x)$. Let 
$\Branches$ denote the set of branches of the tree $\Tree$ intersecting $\supp(x)$. Note that every $s\in \Branches$ contains the minimum of the $\supp(x)$. We define the maximal branch sum functional:
\[ \Sigma(x) = \max \{ S_s(x) : s \in \Branches \}. \]

\begin{proposition}
    Let $x \in \JT$ be a complete positive, separated vector with a unique minimal  element of $\supp(x)$. Then for every branch $s$ with $s\cap \supp(x) \ne \emptyset$ the sum of $x$ along the branch $s$   is constant. That is,  $$S_s(x) = \Sigma(x)$$
\end{proposition}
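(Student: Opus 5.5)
The plan is to show that every branch $s$ through $\supp(x)$ attains $\Sigma(x)$. The engine is Lemma \ref{lemma5.15}: at every non-maximal node $u$ the two children satisfy $S_{c_0}(x) = S_{c_1}(x)$. Completeness of $x$ guarantees that the children of a non-maximal node in $\supp(x)$ are themselves in $\supp(x)$ (by Lemma \ref{lemma5.15} both wedges meet the support, and $\supp(x)=\ran(x)$ is a complete subtree). So along any branch $s$ we can keep applying the lemma until $s$ leaves the support.

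First I would show that for every $u \in \supp(x)$ the quantity $S_u(x)$ splits as
\[ S_u(x) = x(u) + S_{c}(x) \quad\text{for each child } c \text{ of } u. \]
This holds because the optimal final segment from $u$ must pass through some child $c'$. Positivity gives $S_u = x(u) + \max_c S_c$, and by Lemma \ref{lemma5.15} the maximum is attained at both children. When $u$ is maximal in $\supp(x)$ we simply have $S_u = x(u)$.

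Iterating, with $a$ the minimum of $\supp(x)$ and $s$ any branch through $a$ with nodes $a=u_0<u_1<\dots$, we get for every $n$
\[ \Sigma(x) = S_a(x) = \sum_{i<n} x(u_i) + S_{u_n}(x), \]
where $u_n$ is the child of $u_{n-1}$ along $s$. If $s$ leaves $\supp(x)$ at a finite stage, say $u_{n}$ is maximal, then $S_{u_n} = x(u_n)$ and $S_s(x) = \Sigma(x)$ exactly. Here I use that $\Sigma(x)=S_a(x)$: every branch in $\Branches$ is a final segment from $a$, and conversely the optimal segment from $a$ extends to a branch without changing the sum, since $x\ge 0$.

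The main obstacle is the case where $s$ stays in $\supp(x)$ forever. There we need $S_{u_n}(x) \to 0$, so that the partial sums converge to $\Sigma(x)$. I would argue as follows. Consider the canonical norming partition produced by the Greedy Algorithm with $s_{u_n}$ chosen as an optimal final segment from $u_n$, and look at the tail vector $x|_{W_{u_n}}$. Then
\[ S_{u_n}(x)^2 \le \norm{x_{u_n}}_{\JT}^2 \le \norm{x|_{\Tree\setminus T_{|u_n|}}}_{\JT}^2, \]
using Lemma \ref{lem1}(1) for the complete subtree $W_{u_n}$, together with the norm convergence $\norm{x|_{\Tree\setminus T_m}}_{\JT}\to 0$ established in Lemma \ref{lemma5.6}. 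This holds since $x$ lies in the closure of $c_{00}(\Tree)$. Hence $S_{u_n}(x)\to 0$, giving $S_s(x) = \lim_n \sum_{i<n}x(u_i) = \Sigma(x)$. This also confirms that all sums along infinite branches converge.

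Finally, the general case without a unique minimal element follows by applying the argument on each $W_a$ for $a$ minimal in $\supp(x)$, as indicated before the statement. Equal sums from any node $u$ follow in the same way, applying the argument to $x_u$.
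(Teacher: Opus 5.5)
Your argument is correct. It rests on the same engine as the paper's proof: Lemma \ref{lemma5.15} together with completeness, which forces both children of a non-maximal support node into $\supp(x)$. You organize it differently, however.

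The paper first proves, by an induction down the tree, that some branch attaining $\Sigma(x)$ passes through every node of $\supp(x)$. Branches meeting the support in finitely many points end at a maximal node, and so attain $\Sigma(x)$. Branches meeting it infinitely often are handled by density of the maximizing branches together with the continuity of $s\mapsto S_s(x)$ on the branch space, which the paper asserts but does not prove.

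You instead establish the exact identity $S_u(x)=x(u)+S_c(x)$ for \emph{both} children $c$ of $u$ and telescope it along an arbitrary branch. You then dispose of the remainder with the quantitative tail bound $S_{u_n}(x)\le\norm{x_{u_n}}_{\JT}\to 0$. That bound is exactly what is needed to justify the paper's continuity claim. So your version is more self-contained and avoids the topology on the branch space altogether, at the cost of being a little longer; the paper's version is shorter but leaves that step implicit.

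There are two harmless slips.
\begin{enumerate}
\item Since $u_n\in T_{|u_n|}$, the wedge $W_{u_n}$ lies in $\Tree\setminus T_{|u_n|-1}$, not in $\Tree\setminus T_{|u_n|}$, so the index should be shifted by one.
\item The sentence invoking the Greedy Algorithm is not needed. The bound $S_{u_n}(x)\le\norm{x_{u_n}}_{\JT}$ follows directly by testing $x_{u_n}$ against a partition consisting of a single segment.
\end{enumerate}
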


\begin{proof}
    We prove this by showing that the set of branches attaining the maximum sum is dense in $\Branches$.
    
    We claim that for every node $a \in \supp(x)$, there exists a branch $s$ containing $a$ such that $S_s(x) = \Sigma(x)$.
    We proceed by induction on the structure of the support (or implicitly, the depth relative to the support).
    
    If $a$ is a maximal node of $\supp(x)$, any branch passing through $a$ collects the full sum up to $a$ and 0 thereafter. Since $x$ is complete and separated, if $\beta$ is the parent of $a$, then $\beta \in \supp(x)$; hence, the inductive assumption and Lemma \ref{lemma5.15} yield that $\Sigma(x) = S_b(x)$ where $b$ is any branch containing $a$.

    If $a$ is not maximal, then its parent $\beta \in \supp(x)$, hence by Lemma \ref{lemma5.15} and the inductive assumption, there exists a branch $b$ with $a\in b$ and $\Sigma(x) = S_b(x)$. 
    
    This completes the proof. Indeed, if 
    $s\in \Branches$ is a branch with $s \cap \supp(x) \ne \emptyset$, then either $\#(s\cap \supp(x)) < \infty$,
    and in this case $s$ contains a maximal element of the support of $x$, hence $\Sigma(x) = S_s(x)$. 
    The second alternative is that $s\cap \supp(x)$ is an infinite set. In this case, for every $a\in s\cap \supp(x)$, from the second part of the above proof, there exists a branch $b_a$ such that $a\in b_a$ and $\Sigma(x) = S_{b_a}(x)$, which shows that $s$ is approximated by branches $b$ with $\Sigma(x) = S_b(x)$.

    Thus, the set of branches $\{ s \in \Branches : S_s(x) = \Sigma(x) \}$ is dense in the set of all branches intersecting $\supp(x)$. Since the mapping $s \mapsto S_s(x)$ is continuous on the branch space $\Branches$ (for $x \in \JT$), and the function is constant on a dense subset, it must be constant on the whole set.
\end{proof}

As a consequence, we have the following:

\begin{theorem}\label{T5.18}
    Every $x \in \JT$ complete positive and separated satisfies the Equal Sums Property.
\end{theorem}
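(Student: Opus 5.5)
The plan is to derive Theorem \ref{T5.18} from the preceding Proposition on branch constancy, applied to suitable restrictions of $x$. Fix $u\in\supp(x)$ and two final segments $S_1,S_2$ with $\min S_i=u$. Each $S_i$ is the tail $b_i\cap W_u$ of a branch $b_i$ through $u$. Consider the restriction $x_u=x|_{W_u}$.

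First, I will check that $x_u$ satisfies the hypotheses of the Proposition. Positivity is inherited from $x$. Since $\supp(x)$ is a complete subtree and $W_u$ is a complete subtree, $\supp(x_u)=\supp(x)\cap W_u$ is complete, so $x_u$ is complete; its unique minimal element is $u$.

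Separation of $x_u$ is the delicate point. Take distinct $\alpha,\beta\in\ran(x_u)\subset\ran(x)$. By separation of $x$ there is an $x$-norming partition $\mathcal{P}$ separating them, and by Theorem \ref{T5.9} together with the converse of the Greedy Algorithm we may take $\mathcal{P}$ canonical. I then plan to invoke Lemma \ref{lem1}(2) with $\mathcal{B}=W_u$, or more concretely Remark \ref{R5.6}(4): the family $\{s\in\mathcal{P}: s\subset W_u\}\cup\{s_a\cap W_u\}$, where $s_a$ is the segment of $\mathcal{P}$ containing $u$, is the outcome of the Greedy Algorithm on $x_u$. By Theorem \ref{T5.9} it is therefore $x_u$-norming. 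Since it still separates $\alpha$ and $\beta$ (truncating $s_a$ cannot merge distinct segments), $x_u$ is separated. This step is the main obstacle: one must make sure that a norming partition of $x$, restricted to $W_u$, remains norming for $x_u$. The Greedy characterization is exactly what guarantees this, because $s_a\cap W_u$ attains $S_u(x_u)$ by the maximal segment property (Proposition \ref{P5.14}).

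Alternatively, Lemma \ref{lemma5.15} can be used directly, since its conclusion is local. It only needs separation of $x$ itself, applied to nodes $a\ge u$, and children sums computed in $W_a\subset W_u$. So the inductive argument of the Proposition can be rerun with root $u$ instead of the global minimum, bypassing the need to verify separation of $x_u$.

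Finally, applying the Proposition to $x_u$ gives that every branch meeting $\supp(x_u)$ has $x_u$-sum equal to $\Sigma(x_u)$. Both $b_1$ and $b_2$ contain $u\in\supp(x_u)$, so
\[ \sum_{\gamma\in S_1}x(\gamma)=S_{b_1}(x_u)=\Sigma(x_u)=S_{b_2}(x_u)=\sum_{\delta\in S_2}x(\delta), \]
which is the Equal Sums Property.
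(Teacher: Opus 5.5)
Your argument is correct, but it takes a more roundabout route than the paper. The paper obtains Theorem \ref{T5.18} as an immediate consequence of the branch-constancy Proposition applied to $x$ itself. Take $u\in\supp(x)$ and final segments $S_1,S_2$ with $\min S_i=u$. Then $b_i=\{\gamma\in\Tree:\gamma<u\}\cup S_i$ are branches through $u$ that coincide below $u$. Subtracting the finite common sum $\sum_{\gamma<u}x(\gamma)$ from $S_{b_1}(x)=\Sigma(x)=S_{b_2}(x)$ gives $\sum_{S_1}x=\sum_{S_2}x$. The case of several minimal elements is handled by running the argument in each component, which is essentially your second, ``local'' route via Lemma \ref{lemma5.15}.

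Your main route instead applies the Proposition to $x_u$. This forces you to prove that $x_u$ is separated, i.e.\ that complete positive separated vectors stay separated under restriction to wedges $W_u$, $u\in\supp(x)$. That is a true and mildly interesting by-product, and it treats every $u$ (and any number of minimal elements) uniformly. The cost is the whole Greedy machinery: the converse of the Greedy Algorithm, Remark \ref{R5.6}(4) and Theorem \ref{T5.9}. The subtraction argument needs none of this.

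One step in your route needs a different justification. The claim ``we may take $\mathcal{P}$ canonical'' does not follow from Theorem \ref{T5.9} and the converse theorem, since neither says that canonicalising keeps $\alpha$ and $\beta$ apart. It follows instead from the lemma that converts an arbitrary $x$-norming partition into a canonical one. You must add that, for complete positive $x$, this conversion leaves $s\cap\supp(x)$ unchanged for every $s\in\mathcal{P}$. In the finite case this holds because $\max(s\cap\supp(x))$ is maximal in $\supp(x)$; otherwise merging $s$ with the segment starting at that node's child would strictly increase the norm. Consequently the separation of $\alpha,\beta\in\supp(x)$ is preserved.
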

\begin{remark} 
  (1) As pointed out by the AI tool ChatGPT, this result does not hold for arbitrary positive extreme points. Indeed, the positive vector
  \[ x = e_\emptyset + 2e_{00} + e_{10} + 2e_{11} \]
  is separated, hence extreme, but fails the equal sums property.

  (2) There is a wider class of positive extreme points including the complete ones for which Theorem \ref{T5.18} remains valid. For those positive extreme points $x$, their support satisfies the following property: for every $a\in \supp(x)$,
  either $a$ is a maximal element of the $\supp(x)$ or it has exactly two children in $\supp(x)$. It is easy to check that any positive extreme point such that its support satisfies this property also satisfies the equal sums property.
\end{remark}

\section{Topological Properties of the Set of Extreme Points}

We address the question of whether the set of extreme points is closed in the norm topology. We show that this is true for the set of separated points. Hence, if the equivalence between extremality and the separated property holds, then this set is indeed closed.

\begin{proposition}
    The set of separated vectors of the unit ball of $\JT$ is closed in the norm topology.
\end{proposition}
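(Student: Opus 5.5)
The plan is a compactness argument on the space $\Pi$ of partitions, exactly as in the proofs of Proposition \ref{P1} and Proposition \ref{prop:not_sep_not_ext}. Let $(x_n)_n$ be separated vectors in the unit ball with $\norm{x_n - x}_{\JT} \to 0$. We must show that $x$ is separated. If $x = 0$ the range is empty and there is nothing to prove, so fix distinct nodes $\alpha, \beta \in \ran(x)$. The goal is an $x$-norming partition placing $\alpha$ and $\beta$ in different segments.

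\textbf{Step 1: the pair eventually lies in $\ran(x_n)$.} By the description of the range in Lemma \ref{L10}, there are $u, v, u', v' \in \supp(x)$ with $u \le \alpha \le v$ and $u' \le \beta \le v'$. Coordinate evaluation is bounded by the norm, since singletons are segments. Hence $x_n(\gamma) \to x(\gamma) \neq 0$ for $\gamma \in \{u,v,u',v'\}$, and for large $n$ these four nodes lie in $\supp(x_n)$. Therefore $\alpha, \beta \in \ran(x_n)$ for all large $n$. Since $x_n$ is separated, there is an $x_n$-norming partition $\mathcal{P}_n$ in which $\alpha$ and $\beta$ lie in different segments.

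\textbf{Step 2: the $\mathcal{P}_n$ are almost norming for $x$.} Since $\norm{\cdot}_{\mathcal{P}}$ is a seminorm dominated by $\norm{\cdot}_{\JT}$, we get
\[ \norm{x}_{\mathcal{P}_n} \ge \norm{x_n}_{\mathcal{P}_n} - \norm{x - x_n}_{\JT} = \norm{x_n}_{\JT} - \norm{x-x_n}_{\JT} \ge \norm{x}_{\JT} - 2\norm{x - x_n}_{\JT}. \]
By Proposition \ref{P1}(ii), a subsequence $\mathcal{P}_{n_k}$ converges pointwise in the $(v,e)$ encoding to a partition $\mathcal{P}$ with $\norm{x}_{\mathcal{P}} = \norm{x}_{\JT}$.

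\textbf{Step 3: separation passes to the limit.} This is the step needing care. If $\alpha$ and $\beta$ are incomparable, no segment contains both, so any norming partition separates them. Assume instead $\alpha < \beta$. Then ``$\alpha$ and $\beta$ lie in the same segment'' is equivalent to $v(\alpha) = 1$ together with $e(\gamma) = 1$ for every $\gamma$ with $\alpha < \gamma \le \beta$. This condition involves finitely many coordinates, so it defines an open subset of $\Pi$, and its complement $\mathbb{P}_{\text{sep}}$ is closed, as asserted in the proof of Proposition \ref{prop:not_sep_not_ext}. Since every $\mathcal{P}_{n_k}$ lies in $\mathbb{P}_{\text{sep}}$, so does $\mathcal{P}$.

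\textbf{Step 4: covering $\alpha$ and $\beta$.} The limit may leave $\alpha$ or $\beta$ uncovered, so I would check that ``different segments'' can still be arranged. If such a node $\gamma$ had $x(\gamma) \ne 0$, adding $\{\gamma\}$ to $\mathcal{P}$ would increase $\norm{x}_{\mathcal{P}}^2$ by $x(\gamma)^2 > 0$, contradicting that $\mathcal{P}$ is norming. So $x(\gamma) = 0$, and adjoining the singleton $\{\gamma\}$ yields another $x$-norming partition in which $\alpha$ and $\beta$ are in different segments. This is the same device used in Proposition \ref{prop:not_sep_not_ext}. Hence $x$ is separated.

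The main obstacle is Step 3: identifying that separation of a fixed pair is a closed condition in the product topology, and making sure that uncovered nodes in the limit partition do not break the argument (handled in Step 4).
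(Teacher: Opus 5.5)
Your proof is correct and follows the paper's argument: you pick $x_n$-norming partitions that separate the pair, extract a pointwise convergent subsequence via Proposition~\ref{P1}(ii), and pass the separation to the limit. Your cylinder-set argument in Step 3 is what the paper extracts from Remark~\ref{Re1}, and your Steps 1 and 2 supply details the paper only asserts. Step 4 is harmless but unnecessary: each $\mathcal{P}_{n_k}$ places $\alpha$ and $\beta$ in segments, so $v(\alpha)=v(\beta)=1$ along the subsequence, and these closed conditions persist in the limit $\mathcal{P}$.
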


\begin{proof}
    We show that the property of being separated is preserved under norm convergence.
    
    Let $(x_n)_n$ be a sequence of separated vectors in the unit sphere such that $\norm{x_n - x}_{\JT} \to 0$. We must show that $x$ is separated.
    
    Let $u, v \in \ran(x)$ be distinct nodes. We claim that there exists an $x$-norming partition separating $u$ and $v$.
    There exists an $n_0 \in \N$ such that for $n > n_0$, $u$ and $v$ belong to $\ran(x_n)$. For each $n > n_0$, choose a 
    $\mathcal{P}_n$ $x_n$-norming partition that separates $u, v$. Since $\norm{x_n - x}_{\JT} \to 0$, we have that 
    $\norm{x}_{\mathcal{P}_n} \to \norm{x}_{\JT}$.
  
  Choose a subsequence $\mathcal{P}_{n_k} \xrightarrow{\text{pw}} \mathcal{P}$. Then Proposition \ref{P1} and Remark 
  \ref{Re1} derive that $\mathcal{P}$ is an $x$-norming partition which separates the elements $u, v$.
\end{proof}
 
\section*{Acknowledgments}
The author extends his thanks to Pavlos Motakis and the reviewers of the paper for their comments and remarks that helped improve the content and the presentation of the research.

\section*{Declaration of Generative AI and AI-assisted technologies in the writing process}
It is worth pointing out that this research was initiated by an attempt to evaluate the mathematical capabilities of the AI model Gemini 2.5. Specifically, the author asked the model to check if the vector $x = \sum _{|\alpha |\le 3 }e_{\alpha}$ is an extreme point of the ball of radius $\norm{x}$. After a few seconds, it provided a proof that led to Proposition \ref{prop.1}, which easily yields that separated vectors with finite support are indeed extreme. Beyond this, the author had fruitful discussions with Gemini 3 regarding the mathematical formulations. During the preparation of this work, the author also used Gemini 3 to format the LaTeX file and improve the readability of the text. After using this tool, the author reviewed and edited the content as needed and takes full responsibility for the final content of the publication.

\end{document}